\newtheorem{lemma}{Lemma}[section]
\newtheorem{proposition}[lemma]{Proposition}
\newtheorem{theorem}[lemma]{Theorem}
\newtheorem{corollary}[lemma]{Corollary}
\newtheorem{question}[lemma]{Question}
\theoremstyle{definition}
\newtheorem{definition}[lemma]{Definition}
\newtheorem{remark}[lemma]{Remark}
\newtheorem{example}[lemma]{Example}
\def\N{\mathbb N}
\def\R{\mathbb R}
\def\Z{\mathbb Z}
\def\P{\mathcal P}
\def\Pf{\P_{fin}}
\def\supp{\mathrm{supp}}
\definecolor{agbgreen}{rgb}{0.1, 0.4, 0.0}
\newcommand{\T}{\mathcal{T}}
\newlength{\bibitemsep}\setlength{\bibitemsep}{.0\baselineskip plus .0\baselineskip minus .0\baselineskip}
\newlength{\bibparskip}\setlength{\bibparskip}{0pt}
\let\oldthebibliography\thebibliography
\renewcommand\thebibliography[1]{%
  \oldthebibliography{#1}%
  \setlength{\parskip}{\bibitemsep}%
  \setlength{\itemsep}{\bibparskip}%
}
\numberwithin{equation}{section}
\begin{document}

\title{The algebraic entropy of one-dimensional finitary linear cellular automata}

\date{}

\author{H. Ak\i n\footnote{The Abdus Salam International Centre for Theoretical Physics, Strada Costiera 11 I - 34151 Trieste, Italy; akin@ictp.it 
\endgraph \hfill
\ \ \ \ \ \ \ \ \ \ \ \ \ \ \ \ \ \ \ \ 
 (and 
Department of Mathematics, Faculty of Sciences and Arts, Harran University, TR63100, \c Sanl\i urfa, Turkey)} \and 
D. Dikranjan \footnote{University of Udine, via delle Scienze 206, Udine, Italy; dikran.dikranjan@uniud.it} \footnote{The second and the third author are members of the group GNSAGA of INdAM}  \and
A. Giordano Bruno\footnote{University of Udine, via delle Scienze 206, Udine, Italy; anna.giordanobruno@uniud.it} ${}^\ddag$ \and 
D. Toller\footnote{University of Aalborg, Selma Lagerl\o fs Vej 300, 9220 Aalborg, Denmark; danieleto@cs.aau.dk}}

\maketitle

\abstract{The aim of this paper is to present one-dimensional finitary linear cellular automata $S$ on $\Z_m$ from an algebraic point of view. 
Among various other results, we: \\
(i) show that the Pontryagin dual $\widehat S$ of $S$ is a classical one-dimensional linear cellular automaton $T$ on $\Z_m$;\\
 (ii) give several equivalent conditions for $S$ to be invertible with inverse a finitary linear cellular automaton; \\
(iii)  compute the algebraic entropy of $S$, which coincides with the topological entropy of $T=\widehat S$ by the so-called Bridge Theorem.\\
 In order to better understand and describe the entropy we introduce the degree $\deg(S)$ and $\deg(T)$ of $S$ and $T$.}
 
\bigskip
\noindent\hrulefill

\smallskip
\noindent {\footnotesize{\textsl{Keywords: linear cellular automaton, finitary linear cellular automaton, algebraic entropy, topological entropy, formal power series representation, reduced Laurent polynomial, reduced degree, degree of a linear cellular automaton.}}}
\\ {\footnotesize \textsl{MSC2020: primary: 37B15; secondary: %Dynamical aspects of cellular automata
20K30, %Automorphisms, homomorphisms, endomorphisms, etc. for abelian groups
%37A35 %Entropy and other invariants, isomorphism, classification in ergodic theory
37B40. %Topological entropy
}}

\noindent\hrulefill

%\tableofcontents

\section{Introduction}

The concept of cellular automaton was first introduced by Ulam and von Neumann~\cite{VonN} in the 1940s. Later, Hedlund~\cite{Hed} analyzed in detail cellular automata from a mathematical perspective. Since then, cellular automata were extensively studied by scientists in many fields, and nowadays they are one of the most compelling instruments for studying complicated systems in addition to having inner interest. They have been frequently used to model economic, political, biological, ecological, chemical, and physical systems due to their mathematical simplicity.

\smallskip
Following~\cite{CC}, let $A$ be a non-empty set (called \emph{alphabet}), $G$ a group and let $A^G=\{x\colon G\to A : x\ \text{map}\}$ be the \emph{set of configurations}.  For $g\in G$, denote by $L_g\colon G\to G$ the left multiplication by $g$.  
\begin{definition}
Let $A$ be a non-empty set and $G$ a group.
A map $T\colon A^G\to A^G$ is a \emph{cellular automaton} if there exist a finite non-empty subset $M$ of $G$ and a map $f\colon A^M\to A$ such that, for every $x\in A^G$ and $g\in G$, 
$$T(x)(g):=f((x\circ L_g)\restriction_M).$$
The set $M$ is called \emph{memory set} of $T$ and $f$ is the \emph{local rule} of $T$ associated to $M$. 
\end{definition}

For every memory set $M$ of $T$, the local rule associated to $M$ is necessarily unique, while there may exist different memory sets of $T$; nevertheless, by intersecting them, one can obtain a unique memory set of minimal cardinality, called \emph{minimal memory set}.

For a group $G$ and a non-empty set $A$, the surjectivity of a cellular automaton $T\colon A^G \to A^G$ has been thoroughly studied. A configuration $c \in A^G$ is called a \emph{Garden of Eden configuration for $T$} if $c$ is not in the image of $T$. Thus, the surjectivity of $T$ is equivalent to the non-existence of Garden of Eden configurations.  
We formulate the celebrated Garden of Eden Theorem, describing the surjectivity of a cellular automaton $T\colon A^G \to A^G$. (Recall that $T$ is said to be \emph{pre-injective} if any two configurations $c,c' \in A^G$ satisfying $|\{g \in G : c(g) \neq c'(g) \}| < \infty$ and $T(c) = T(c')$ coincide; obviously, injective implies pre-injective.)

\begin{theorem}{\rm\cite[Theorem 5.3.1]{CC}}\label{thm:goe}
Let $G$ be an amenable group, $A$ be a non-empty finite set and $T\colon A^G \to A^G$ a cellular automaton. Then $T$ is surjective if and only if it is pre-injective.
\end{theorem}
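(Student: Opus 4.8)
The plan is to route the argument through an entropy-type quantity attached to a fixed \Folner net $(F_n)_n$ of the amenable group $G$ (which exists by amenability). For $X\subseteq A^G$ write $X_{F_n}:=\set{x\rest_{F_n}:x\in X}\subseteq A^{F_n}$ and put $h(X):=\limsup_n\tfrac{1}{\card{F_n}}\log\card{X_{F_n}}$; then $h$ is monotone and $h(A^G)=\log\card A$. I would prove the three implications: (I) if $T$ is pre-injective then $h(T(A^G))=\log\card A$; (II) if $T$ is not surjective then $h(T(A^G))<\log\card A$; (III) if $T$ is not pre-injective then $h(T(A^G))<\log\card A$. Granting these, $T$ pre-injective $\stackrel{\mathrm{(I)}}{\Longrightarrow}h(T(A^G))=\log\card A\stackrel{\mathrm{(II)}}{\Longrightarrow}T$ surjective, while $T$ surjective $\Longrightarrow T(A^G)=A^G\Longrightarrow h(T(A^G))=\log\card A\stackrel{\mathrm{(III)}}{\Longrightarrow}T$ pre-injective, which is the assertion. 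Two standard amenability facts will be used throughout: (a) for every finite $E\subseteq G$ there is $c_E>0$ such that, for all large $n$, $F_n$ contains $\geq c_E\card{F_n}$ pairwise disjoint translates of $E$ (a greedy packing argument); and (b) for finite $M\subseteq G$ one has $\card{F_nM}/\card{F_n}\to 1$ and $\card{F_n^-}/\card{F_n}\to 1$, where $F_n^-:=\set{g\in G:gM\subseteq F_n}$. One may assume the memory set $M$ of $T$ satisfies $M=M^{-1}\ni e$ (enlarging $M$, hence the domain of the local rule, if needed).

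For (I): if $z,z'\in A^G$ agree off $F_n^-$, then $T(z),T(z')$ agree off $F_n$, since for $g\notin F_n$ one has $gM\cap F_n^-=\emptyset$ (as $M$ is symmetric), hence $z\rest_{gM}=z'\rest_{gM}$. Fixing a configuration off the finite set $F_n^-$ and letting it vary only on $F_n^-$, the map $z\mapsto T(z)\rest_{F_n}$ is then injective by pre-injectivity, so $\card{(T(A^G))_{F_n}}\geq\card A^{\card{F_n^-}}$, and (b) together with monotonicity gives (I). For (II): since $A$ is finite, $A^G$ is compact and $T$ continuous, so $T(A^G)$ is closed and shift-invariant; if it is proper it cannot satisfy $(T(A^G))_E=A^E$ for all finite $E$ (else it would be dense, hence all of $A^G$), so there is a finite $E$ with $\card{(T(A^G))_E}\leq\card A^{\card E}-1$, and by shift-invariance the same bound holds on every translate of $E$. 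Packing $N\geq c_E\card{F_n}$ disjoint translates of $E$ inside $F_n$ gives $\card{(T(A^G))_{F_n}}\leq(\card A^{\card E}-1)^N\card A^{\card{F_n}-N\card E}=\card A^{\card{F_n}}(1-\card A^{-\card E})^N$, whence $h(T(A^G))\leq\log\card A+c_E\log(1-\card A^{-\card E})<\log\card A$.

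The crux is (III). Non--pre-injectivity yields $x\neq x'$ in $A^G$ agreeing off a nonempty finite set $\Omega_0$ with $T(x)=T(x')$. Put $\Omega:=\Omega_0M^2$ and $p:=x\rest_\Omega\neq q:=x'\rest_\Omega$ (these agree on $\Omega\setminus\Omega_0$). The essential point is that $p,q$ are \emph{mutually erasable}: for every $y\in A^G$, the two configurations obtained from $y$ by overwriting $y\rest_\Omega$ with $p$, resp.\ with $q$, have equal image under $T$; indeed, at $g$ with $gM\cap\Omega_0=\emptyset$ the two configurations agree on $gM$, while if $gm\in\Omega_0$ for some $m\in M$ then $g\in\Omega_0M$, so $gM\subseteq\Omega$ and the two configurations restrict to $x\rest_{gM}$ resp.\ $x'\rest_{gM}$ there, on which the local rule agrees because $T(x)=T(x')$; the same holds for every translate $g\Omega$. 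Now fix $n$, pack disjoint translates $g_1\Omega,\dots,g_N\Omega\subseteq F_n$ with $N\geq c_\Omega\card{F_n}$, and consider the ``erasure'' operation on patterns over $F_nM$ that, at each $i$, rewrites the content of $g_i\Omega$ to $g_ip$ whenever it equals $g_iq$ and leaves it unchanged otherwise. Performing these (pairwise independent, since the $g_i\Omega$ are disjoint) rewrites and applying mutual erasability at each step shows that erasure does not change $T(\cdot)\rest_{F_n}$; since $(T(A^G))_{F_n}$ is determined by restrictions to $F_nM$ and the image of the erasure map has cardinality $(\card A^{\card\Omega}-1)^N\card A^{\card{F_nM}-N\card\Omega}$ (no $g_i\Omega$ may carry the pattern $g_iq$), we obtain $\card{(T(A^G))_{F_n}}\leq\card A^{\card{F_nM}}(1-\card A^{-\card\Omega})^N$, and (b) yields $h(T(A^G))<\log\card A$. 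The main obstacles are the construction of the mutually erasable pair (which forces the thickening $\Omega=\Omega_0M^2$) and the verification that erasure preserves $T(\cdot)\rest_{F_n}$, together with securing the packing estimate (a) uniformly along the \Folner net; fact (b) is routine.
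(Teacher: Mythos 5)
This theorem is quoted in the paper from \cite[Theorem 5.3.1]{CC} without proof, so there is no in-paper argument to compare against; your proposal is a correct reconstruction of the standard entropy-based proof given in that reference (F\o lner entropy, the packing/tiling lemma, and the reduction of non-pre-injectivity to a mutually erasable pair of patterns on the thickened set $\Omega=\Omega_0M^2$). The steps you leave as standard --- the greedy packing estimate (a) and the F\o lner limits (b) --- are indeed routine, and the key verifications (that $gM\cap\Omega_0\neq\emptyset$ forces $gM\subseteq\Omega$, that the erasure operation on disjoint translates preserves $T(\cdot)\restriction_{F_n}$, and the resulting counting bounds) are carried out correctly.
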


Moreover, a group $G$ is called \emph{surjunctive} if for every non-empty finite set $A$, every injective cellular automaton $T\colon A^G\to A^G$ is surjective. The class of surjunctive groups is very large, as it is known that every sofic group is surjunctive. For example, Theorem~\ref{thm:goe} yields that amenable groups are surjunctive. In particular, $\Z$ is surjunctive, and we discuss this case in \S\ref{sec:InjSurj}.

\smallskip
Also the following result holds in the case when $A$ is finite (a counterexample in the general case can be found in~\cite{CC}). 
Our choice to focus this paper on the case of a finite alphabet $A$ was determined by this relevant  difference. As usual, $A$ is endowed with the discrete (and compact) topology and $A^G$ with the product topology, so that $A^G$ is a Hausdorff, compact and totally disconnected topological space. Consider the left action of $G$ on $A^G$, called \emph{$G$-shift}, defined, for every $g\in G$ and every $x\in A^G$, by $$gx:=x\circ L_{g^{-1}}\in A^G.$$

\begin{theorem}[Curtis-Hedlund]\label{CHTheorem}
Let $G$ be a group, $A$ a non-empty finite set and $T\colon A^G\to A^G$ a map. Then $T$ is a cellular automaton if and only if $T$ is continuous and commutes with the $G$-shift. %$G$-equivariant.
\end{theorem}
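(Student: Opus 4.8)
The plan is to prove the two implications separately; the forward one is a direct computation with the shift, while the converse rests on a compactness argument.

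\emph{Cellular automaton $\Rightarrow$ continuous and $G$-equivariant.} Suppose $T$ has memory set $M$ and local rule $f$. For equivariance, fix $g,h\in G$ and $x\in A^G$ and unravel the definitions: since $(gx)(k)=x(g^{-1}k)$ for all $k\in G$, one gets $(gx\circ L_h)\restriction_M=(x\circ L_{g^{-1}h})\restriction_M$, hence
$$T(gx)(h)=f\bigl((gx\circ L_h)\restriction_M\bigr)=f\bigl((x\circ L_{g^{-1}h})\restriction_M\bigr)=T(x)(g^{-1}h)=(gT(x))(h),$$
so $T(gx)=gT(x)$. For continuity, a basic neighbourhood of $T(x)$ is the set of configurations agreeing with $T(x)$ on some finite $F\subseteq G$; if $y$ agrees with $x$ on the finite set $FM:=\{gm:g\in F,\ m\in M\}$, then for each $g\in F$ the value $T(y)(g)=f((y\circ L_g)\restriction_M)$ depends only on $y\restriction_{gM}$ and $gM\subseteq FM$, so $T(y)$ agrees with $T(x)$ on $F$. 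Hence $T$ is continuous.

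\emph{Continuous and $G$-equivariant $\Rightarrow$ cellular automaton.} Let $\pi_{1_G}\colon A^G\to A$ be the projection onto the coordinate indexed by the identity $1_G$ of $G$. The composite $\pi_{1_G}\circ T\colon A^G\to A$ is continuous into the finite discrete space $A$, so the fibres $(\pi_{1_G}\circ T)^{-1}(a)$, $a\in A$, form a finite partition of $A^G$ into clopen sets. Since the cylinders depending on finitely many coordinates form a base of clopen sets and $A^G$ is compact, each such fibre is a finite union of cylinders; taking $M\subseteq G$ to be the (finite) union of all coordinate sets involved, $\pi_{1_G}\circ T$ is constant on the fibres of the restriction map $A^G\to A^M$, hence factors as $x\mapsto f(x\restriction_M)$ for some $f\colon A^M\to A$. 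I expect this passage to a finite subproduct to be the only genuinely non-routine step, and it is precisely where compactness of $A^G$ (equivalently, finiteness of $A$) is used.

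It then remains to recover $T(x)(g)$ for arbitrary $g\in G$ from $f$ by equivariance. For any $y\in A^G$ one has $(g^{-1}y)(1_G)=y(g)$, so, using $g^{-1}T(x)=T(g^{-1}x)$,
$$T(x)(g)=(g^{-1}T(x))(1_G)=T(g^{-1}x)(1_G)=f\bigl((g^{-1}x)\restriction_M\bigr);$$
and since $(g^{-1}x)(m)=x(gm)=(x\circ L_g)(m)$ for $m\in M$, this reads $T(x)(g)=f((x\circ L_g)\restriction_M)$. Thus $T$ is a cellular automaton with memory set $M$ and local rule $f$, which completes the proof (one may afterwards pass to the minimal memory set, though this is not needed for the statement).
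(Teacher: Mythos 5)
Your proof is correct and is the standard argument for the Curtis--Hedlund--Lyndon theorem (equivariance by unwinding the definition of the local rule, continuity from the finite memory set, and conversely compactness of $A^G$ for finite $A$ to extract a finite set of coordinates on which $\pi_{1_G}\circ T$ depends, then equivariance to propagate the local rule to all of $G$). The paper itself states this theorem without proof, citing it as classical, so there is nothing to compare against; the only cosmetic point is that your $M$ could be empty when $T$ is constant, which is repaired by adjoining $1_G$ to $M$.
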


We shall ask the finite alphabet $A$ to be the finite cyclic group $\Z_m=\Z/m\Z$ with $m>0$, so that $\Z_m^G$ becomes a totally disconnected compact abelian group. 
Furthermore, we restrict to the classical case $G = \Z$, thus considering the so-called \emph{one-dimensional} cellular automata. 
\begin{center}
In the sequel we often omit ``one-dimensional", especially when it is clear from the context.
\end{center}

For a map $T\colon \Z_m^\Z\to \Z_m^\Z$, Theorem~\ref{CHTheorem} yields that $T$ is a cellular automaton if and only if $T$ is continuous and $T\circ \sigma=\sigma\circ T$, where $\sigma$ is the $\Z$-shift ($\sigma$ is usually called right shift, see Example~\ref{Exa1}).
A cellular automaton $T$ is called \emph{linear} if $T$ is a group endomorphism (equivalently, when its local rule $f$ is a homomorphism); moreover, this is equivalent to have $f$ in the form \eqref{locRule} of Definition~\ref{DefCA}. For every $\lambda\in\Z_m^\Z$, as usual we let $\supp(\lambda):=\{i\in\Z:\lambda_i\neq0\}$.

\begin{definition}\label{DefCA}
A map $T=T_{f[l,r]\lambda}\colon \Z_m^\Z\to \Z_m^\Z$ is a \emph{one-dimensional linear cellular automaton on $\Z_m$} if there exist a finite subset $M=\{l,\ldots,r\}$ of $\Z$ with $l\leq r$, $\lambda=(\lambda_n)_{n\in\Z}\in\Z_m^\Z$ with $\supp(\lambda)\subseteq M$ and a map $f\colon \Z_m^M\to \Z_m$, such that, for every $x=(x_n)_{n\in\Z}\in \Z_m^\Z$,
\begin{equation}\label{lCA}
(T(x))_n=f(x_{n+l},\ldots,x_{n+r})\ \text{for every $n\in\Z$},
\end{equation}
where, for every $(x_l,\ldots,x_r)\in \Z_m^M$,
\begin{equation}\label{locRule}
f(x_l,\ldots,x_r)=\sum_{i=l}^r\lambda_i x_i.
\end{equation}
\end{definition}

\begin{remark}\label{remark:june:8}
\begin{itemize}
  \item[(1)] In the above definition, $\supp(\lambda)$ is the minimal memory set for $T_{f[l,r]\lambda}$. Obviously, \eqref{locRule} implies that 
$f(x_l,\ldots,x_r)=\sum_{ i\in \supp(\lambda) }\lambda_i x_i$. In particular, $f$ is determined by $\lambda$ (via its entries, and its support), and one could denote simply 
$$T_{f[l,r]\lambda}=T_{\lambda}$$ when there is no need to specify the local rule and the memory set. 
  \item[(2)] When $f\equiv0$ (i.e., $\lambda=0$), then also $T\equiv0$, and vice versa; in this case $\supp(\lambda)=\emptyset$. When $T\not\equiv0$, one can always assume without loss of generality that $l,r\in\supp(\lambda)$.
  \item[(3)] In the literature (e.g., in \cite{DMM}) the set $M$ in the above definition is often taken to be symmetric, that is, $M=\{-r,\ldots,r\}$ for some $r\in\N$, and at least one
between $r$ and $-r$ is required to be in $\supp(\lambda)$. In this paper we prefer to use the above non-symmetric notation and the minimal memory set $\supp(\lambda)$.
\end{itemize}
\end{remark}

\begin{example}\label{Exa1}
\begin{itemize}
   \item[(1)]The identity automorphism $id_{\Z_m^\Z}\colon \Z_m^\Z\to \Z_m^\Z$ can be obtained as the linear cellular automaton $T_{f[l,r]\lambda}$ with $l=r=0$ and local rule $f(x_0)= x_0$, that is, $\supp(\lambda)=\{0\}$ and $\lambda_0=1$.
   \item[(2)]The {\em right Bernoulli shift} $\sigma_m\colon \Z_m^\Z\to \Z_m^\Z$ is given by $T_{f[l,r]\lambda}$ with $l=r=-1$ and local rule $f(x_{-1})= x_{-1}$, that is, $\supp(\lambda)=\{-1\}$ and $\lambda_{-1}=1$. 
   \item[(3)] The {\em left Bernoulli shift} $\beta_m\colon \Z_m^\Z\to \Z_m^\Z$ is given by $T_{f[l,r]\lambda}$ with $l=r=1$ and local rule $f(x_1)= x_{1}$, that is, $\supp(\lambda)=\{1\}$ and $\lambda_1=1$; clearly, $\beta_m$ is the inverse of $\sigma_m$.
\end{itemize}
The three linear cellular automata above have the same local rule (the identity map) and $|\supp(\lambda)| = |M| = 1$ in all three cases, but they differ in the choice of the memory set $M$ (and so of $\lambda$). When there is no possibility of confusion, we denote $\sigma_m$ and $\beta_m$ simply by $\sigma$ and $\beta$, respectively.
\end{example}

\smallskip
Many papers were dedicated to dynamical aspects of one-dimensional linear cellular automata on $\Z_m$, as for example~\cite{Akin1,Akin2,BKM,DMM0,DMM,DiLena}.
This paper is focused on the entropy of one-dimensional linear cellular automata on $\Z_m$. As far as the topological entropy is concerned, since the compact abelian group $\Z_m^\Z$ is metrizable, the definition of topological entropy by Adler, Konheim and McAndrew~\cite{AKM} by open covers of compact spaces coincides with that given by Bowen~\cite{B} for uniformly continuous self-maps of metric spaces.
Hurd, Kari and Culik~\cite{HKC} showed that the topological entropy of one-dimensional cellular automata cannot be computed algorithmically in the general case. 
The relevance of {linearity} became clear when D'Amico, Manzini and Margara~\cite[Theorem 2]{DMM} showed that the topological entropy of one-dimensional linear cellular automata on $\Z_m$ can be computed by a precise formula (see Corollary~\ref{htopFClCA}).

\medskip
 Following~\cite{CC,ION}, call the direct sum $\Z_m^{(\Z)}=\{c\in \Z_m^\Z: |\supp(c)|<\infty\}\leq \Z_m^\Z$ the \emph{set of finite configurations}.
Since $\Z_m^{(\Z)}$ remains invariant under the action of any linear cellular automaton $T\colon \Z_m^\Z\to \Z_m^\Z$, this terminology also motivates the following definition. 

\begin{definition}\label{Def:June6}\cite{ION} 
If $T_{f[l,r]\lambda}\colon\Z_m^\Z\to \Z_m^\Z$ is a linear cellular automaton, its restriction 
$$S_{f[l,r]\lambda}:=T_{f[l,r]\lambda}\restriction_{\Z_m^{(\Z)}}\colon \Z_m^{(\Z)}\to \Z_m^{(\Z)}$$
 is called \emph{one-dimensional finitary linear cellular automaton on $\Z_m$} with memory set $M=\{l,\ldots,r\}$ and local rule $f\colon\Z_m^M\to\Z_m$ associated to $M$. 
\end{definition}

In the sequel the domain $\Z_m^{(\Z)}$ of a finitary linear cellular automaton $S\colon\Z_m^{(\Z)}\to\Z_m^{(\Z)}$ will always be considered as a {\em discrete} abelian group, unless otherwise explicitly specified.

Following Remark~\ref{remark:june:8}(1), we simply write $S_\lambda$ instead of $S_{f[l,r]\lambda}$ when we do not need explicitly $l,r$ and $f$.

\begin{example}
We still denote by $\sigma_m\colon\Z_m^{(\Z)}\to\Z_m^{(\Z)}$ (respectively, by $\beta_m\colon\Z_m^{(\Z)}\to\Z_m^{(\Z)}$) the restriction of the right (respectively, left) shift to the set of finite configuration, and write it simply $\sigma$ (respectively, $\beta$) when there is no possibility of confusion.
\end{example}

 The properties of one-dimensional finitary linear cellular automata on $\Z_m$ and their applications are the main object of study of this paper.

\smallskip
For a locally compact abelian group $L$, we denote by $\widehat L$ the Pontryagin dual of $L$, which is again a locally compact abelian group; if $\phi\colon L\to L$ is a continuous endomorphism of $L$, we denote by $\widehat\phi\colon \widehat L\to\widehat L$ its dual endomorphism, which is still continuous. See \S\ref{pd-sec} for more details on Pontryagin duality. In particular, we need that $\widehat{\Z_m}\cong\Z_m$, and so we identify $\widehat{\Z_m}$ with $\Z_m$ and $\widehat{\Z_m^{(\Z)}}$ with $\Z_m^\Z$.

Here comes one of the main results of the paper,  describing the Pontryagin dual of a one-dimensional finitary linear cellular automaton $S_{\lambda}\colon \Z_m^{(\Z)} \to  \Z_m^{(\Z)}$. For $\lambda = ( \lambda_n )_{n \in \Z} \in \Z_m^{\Z}$ we use the notation 
$$
\lambda^\wedge:=( \lambda_{-n} )_{n\in\Z} \in \Z_m^{\Z}
$$
and we call $\lambda$ \emph{symmetric} if $\lambda^\wedge=\lambda$. 

\begin{theorem}\label{dualCAintro}
If $S_{\lambda}\colon\Z_m^{(\Z)}\to \Z_m^{(\Z)}$ is a finitary linear cellular automaton, then $\widehat {S_{\lambda}}=T_{\lambda^\wedge}$. Consequently, if $T_{\lambda}\colon \Z_m^\Z\to \Z_m^\Z$ is a linear cellular automaton, then $\widehat {T_{\lambda}}=S_{\lambda^\wedge}$. In case $\lambda$ is symmetric, $\widehat {S_{\lambda}}=T_{\lambda}$ and $\widehat {T_{\lambda}}=S_{\lambda}$.
\end{theorem}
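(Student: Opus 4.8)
The plan is to read off the dual endomorphism $\widehat{S_\lambda}$ from its defining universal property. First I would fix, as in the excerpt, the identification $\widehat{\Z_m}\cong\Z_m$, whence $\widehat{\Z_m^{(\Z)}}\cong\prod_{n\in\Z}\widehat{\Z_m}\cong\Z_m^\Z$ (the dual of a direct sum is the product of the duals), so that a character $\chi=(\chi_n)_{n\in\Z}\in\Z_m^\Z$ evaluates on a finite configuration $c=(c_n)_{n\in\Z}\in\Z_m^{(\Z)}$ by the (finite) sum $\langle\chi,c\rangle=\sum_{n\in\Z}\chi_n c_n\in\Z_m\hookrightarrow\R/\Z$. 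Since $S_\lambda$ is an endomorphism of the \emph{discrete} group $\Z_m^{(\Z)}$ it is continuous, so $\widehat{S_\lambda}$ is the unique continuous endomorphism of $\Z_m^\Z$ with $\langle\widehat{S_\lambda}(\chi),c\rangle=\langle\chi,S_\lambda(c)\rangle$ for all $\chi$ and all finite $c$.

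The heart of the proof is then a bookkeeping computation. Using $S_\lambda(c)_n=\sum_{i\in\supp(\lambda)}\lambda_i c_{n+i}$ and the change of summation variable $k=n+i$ (legitimate since every sum in sight is finite), one obtains
$$\langle\chi,S_\lambda(c)\rangle=\sum_{n\in\Z}\chi_n\sum_i\lambda_i c_{n+i}=\sum_{k\in\Z}\Big(\sum_i\lambda_i\chi_{k-i}\Big)c_k .$$
Testing this against the characters dual to the canonical generators of $\Z_m^{(\Z)}$ (i.e.\ taking $c$ supported at a single coordinate) forces $(\widehat{S_\lambda}(\chi))_k=\sum_i\lambda_i\chi_{k-i}$ for every $k\in\Z$. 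On the other hand $(T_{\lambda^\wedge}(\chi))_k=\sum_j(\lambda^\wedge)_j\chi_{k+j}=\sum_j\lambda_{-j}\chi_{k+j}$, which becomes the very same expression after the substitution $i=-j$. Since $\supp(\lambda^\wedge)=-\supp(\lambda)$ is finite, $T_{\lambda^\wedge}$ is indeed a (classical) one-dimensional linear cellular automaton on $\Z_m$ — with memory set $\{-r,\ldots,-l\}$ — and we conclude $\widehat{S_\lambda}=T_{\lambda^\wedge}$.

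For the remaining assertions I would invoke Pontryagin biduality: under the canonical isomorphism $L\cong\widehat{\widehat L}$ (in our case $\widehat{\Z_m^\Z}\cong\Z_m^{(\Z)}$, compatible with the pairing above) one has $\widehat{\widehat\phi}=\phi$ for every continuous endomorphism $\phi$. Applying the identity just proved with $\lambda$ replaced by $\lambda^\wedge$, and using the obvious involutivity $(\lambda^\wedge)^\wedge=\lambda$, gives $\widehat{S_{\lambda^\wedge}}=T_\lambda$; dualizing both sides yields $\widehat{T_\lambda}=S_{\lambda^\wedge}$. The symmetric case $\lambda^\wedge=\lambda$ is then just a substitution.

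I do not expect a serious obstacle: the only points requiring care are (a) pinning down the dual-of-a-direct-sum identification and the pairing precisely enough that the formula for $\widehat{S_\lambda}$ is unambiguous, and (b) keeping the shift direction straight in the convolution $\sum_i\lambda_i\chi_{k-i}$, so that what appears is $\lambda^\wedge$ and not $\lambda$ itself. Equivalently, one can run the whole argument in the Laurent-polynomial model: both $S_\lambda$ and $T_\lambda$ act as multiplication by $P_\lambda(X^{-1})=\sum_i\lambda_i X^{-i}$ (on $\Z_m[X,X^{-1}]$, resp.\ on the module of two-sided formal series), the pairing extracts the coefficient of $X^0$ in $C(X^{-1})A(X)$, and the theorem collapses to the transparent identity $P_{\lambda^\wedge}(X)=P_\lambda(X^{-1})$.
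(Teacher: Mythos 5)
Your proposal is correct and is essentially the paper's own argument: the same finite rearrangement of the pairing $\sum_n \chi_n(S_\lambda(c))_n$ into a convolution, followed by Pontryagin biduality for the remaining assertions. The only (immaterial) difference is the order: you compute $\widehat{S_\lambda}=T_{\lambda^\wedge}$ directly and deduce $\widehat{T_\lambda}=S_{\lambda^\wedge}$ by dualizing, whereas the paper computes $\widehat{T_\lambda}=S_{\lambda^\wedge}$ first (using the formula $(T_\lambda(x))_n=\sum_i\lambda_{i-n}x_i$ of Proposition~\ref{TTSS}) and then obtains $\widehat{S_\lambda}=T_{\lambda^\wedge}$ from biduality.
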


As an easy consequence of this result, we find a new simpler proof of the known relation from~\cite{ION,Rich} that $T_{\lambda}$ is injective (respectively, surjective) if and only if $S_{\lambda}$ is surjective (respectively, injective) (see Corollary~\ref{injsurjsurjinj}).

\medskip
Using ideas briefly sketched by Adler, Konheim and McAndrew~\cite{AKM} in connection with the topological entropy, Weiss~\cite{W} developed the definition of algebraic entropy for endomorphisms of torsion abelian groups; this notion of entropy was rediscovered in~\cite{DGSZ}, where it was studied in deep. Moreover, Peters~\cite{P1} modified Weiss' notion of algebraic entropy for automorphisms of abelian groups, and his approach was extended to all endomorphisms of abelian groups in~\cite{DGB2}. For the algebraic entropy of continuous endomorphisms of locally compact abelian groups, see~\cite{P2,V1}. 

The algebraic entropy is connected to the topological entropy by means of Pontryagin duality in the so-called Bridge Theorem (see Theorem~\ref{BT}), namely, the algebraic entropy of an endomorphism of a discrete abelian group coincides with the topological entropy of its dual. This was proved in special cases in~\cite{P1,W} and in the general case in~\cite{DGB1}. For the case of locally compact abelian groups and their continuous endomorphisms, see~\cite{DGB2,P2}. See also~\cite{DGB4} for further connections among algebraic, topological and measure entropies.

\smallskip
Our main result in this paper is the computation of the algebraic entropy of one-dimensional finitary linear cellular automata on $\Z_m$ in Theorem~\ref{halgFClCA}.

In the sequel, for $\lambda\in\Z_m^\Z$, we frequently make use of the {\em reduced support}
$$
\supp^*(\lambda) :=\{i\in \Z: (\lambda_i, m) =1\}
$$
of $\supp(\lambda)$.  We denote by $(\lambda_i ,m)$ the greatest common divisor of $m$ and any representative in $\Z$ of the class $\lambda_i\in \Z_m$. It is useful to define also 
 $$
 \supp^* (S_{\lambda}):= \supp^* (\lambda)\quad \text{and}\quad  \supp^*(T_{\lambda}):= \supp^* (\lambda).
 $$

\begin{definition}\label{degree}
Let $S=S_{\lambda}\colon \Z_m^{(\Z)}\to \Z_m^{(\Z)}$ be a finitary linear cellular automaton. Define the {\em degree} $\deg(S)$ of $S$ as follows. If $\supp^*(\lambda)=\emptyset$ (in particular, if $S\equiv 0$), let $\deg(S)=0$; otherwise, if  $l':=\min \supp^*(\lambda)$ and $r':=\max \supp^*(\lambda)$, let  
$$\deg(S):=\begin{cases}-l' & \text{if $l'\leq r'\leq 0$},\\ r' & \text{if $0\leq l'\leq r'$},\\ r'-l' & \text{if $l'<0<r'$}.\end{cases}$$
The same definition of degree $\deg(T)$ can be given for a linear cellular automaton $T\colon \Z_m^{\Z}\to \Z_m^{\Z}$.
\end{definition}

 In the case when $m$ is a prime power we obtain the following simple formula.

\begin{theorem}\label{Th5} 
Let $m=p^k$ with $p$ prime and $k\in\N_+$, and let $S\colon \Z_m^{(\Z)}\to \Z_m^{(\Z)}$ be a finitary linear cellular automaton.  Then $$h_{alg}(S)=\deg(S)\log p^k.$$
\end{theorem}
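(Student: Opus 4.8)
The plan is to compute $h_{alg}(S)$ by passing to the Pontryagin dual and then invoking the known formula for the topological entropy of a linear cellular automaton (this is also how the general Theorem~\ref{halgFClCA} is obtained, the present statement being its prime‑power specialization). Write $S=S_\lambda$. By the Bridge Theorem (Theorem~\ref{BT}) one has $h_{alg}(S_\lambda)=h_{top}(\widehat{S_\lambda})$, and by Theorem~\ref{dualCAintro} the dual $\widehat{S_\lambda}$ is the linear cellular automaton $T_{\lambda^\wedge}$ on $\Z_m$. So it suffices to show $h_{top}(T_{\lambda^\wedge})=\deg(S_\lambda)\log p^k$.

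First I would observe that the degree is invariant under the reflection $\lambda\mapsto\lambda^\wedge$, i.e.\ $\deg(T_{\lambda^\wedge})=\deg(S_\lambda)$. Indeed $(\lambda^\wedge)_n=\lambda_{-n}$ and $m=p^k$, so $i\in\supp^*(\lambda^\wedge)$ iff $-i\in\supp^*(\lambda)$, i.e.\ $\supp^*(\lambda^\wedge)=-\supp^*(\lambda)$. If this set is empty both degrees are $0$; otherwise, setting $l'=\min\supp^*(\lambda)$ and $r'=\max\supp^*(\lambda)$, the extremes of $\supp^*(\lambda^\wedge)$ are $-r'$ and $-l'$, and the substitution $(l',r')\mapsto(-r',-l')$ simply permutes the three cases of Definition~\ref{degree} while permuting the corresponding values $-l'$, $r'$, $r'-l'$ accordingly. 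Hence $\deg(T_{\lambda^\wedge})=\deg(S_\lambda)$.

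It then remains to prove $h_{top}(T_{\lambda^\wedge})=\deg(T_{\lambda^\wedge})\log p^k$, which is the D'Amico--Manzini--Margara formula for the topological entropy of a one‑dimensional linear cellular automaton on $\Z_m$ (Corollary~\ref{htopFClCA}) in the case $m=p^k$. I expect this specialization to be the only point requiring genuine care: the general DMM formula involves the factorization of $m$ and, for each prime divisor, left and right ``radii'' read off from the coefficients not divisible by that prime, and one must check that for $m=p^k$ the single surviving term equals $k\log p$ times $\max(0,\max\supp^*(\lambda^\wedge))-\min(0,\min\supp^*(\lambda^\wedge))$, which by Definition~\ref{degree} is exactly $\deg(T_{\lambda^\wedge})$. (If Corollary~\ref{htopFClCA} is already phrased in terms of $\deg$, this step is immediate.) Combining the three steps gives $h_{alg}(S)=\deg(S)\log p^k$.

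A self‑contained alternative bypasses duality: identify $\Z_{p^k}^{(\Z)}$ with the group ring $\Z_{p^k}[X^{\pm1}]$, so that $S_\lambda$ becomes multiplication by a Laurent polynomial $P$, and filter by the $S_\lambda$‑invariant subgroups $p^t\Z_{p^k}[X^{\pm1}]$ for $0\le t\le k$. Each successive quotient is isomorphic to $\F_p[X^{\pm1}]$, on it the induced map is multiplication by the reduction $\bar P$ of $P$ modulo $p$, and $\supp(\bar P)=\supp^*(\lambda)$ because $m=p^k$. By the Addition Theorem for algebraic entropy, $h_{alg}(S_\lambda)=k\,h_{alg}(\bar P\cdot{-})$, and over the field $\F_p$ an elementary argument on the leading and trailing exponents of the elements $\bar P^{\,i}X^{\,j}$ gives $h_{alg}(\bar P\cdot{-})=\deg(S_\lambda)\log p$, again yielding the claim. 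Either way, the crux is identifying $\deg$ with the effective span that controls the growth of the trajectories, the passage from a field to $\Z_{p^k}$ being absorbed by the filtration, respectively by the DMM formula.
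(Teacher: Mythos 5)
Your first route is circular within this paper: the D'Amico--Manzini--Margara formula appears here as Corollary~\ref{htopFClCA}, which is \emph{derived from} Theorem~\ref{halgFClCA} --- hence from the very statement you are proving --- via the Bridge Theorem and Theorem~\ref{dualCAintro}. Your parenthetical ``if Corollary~\ref{htopFClCA} is already phrased in terms of $\deg$, this step is immediate'' runs the implication backwards. The route can only be salvaged by importing \cite[Theorem 2]{DMM} as an external black box, which is logically admissible but defeats the paper's purpose of giving an independent entropy computation that \emph{re-proves} that formula. (Your check that $\deg$ is invariant under $\lambda\mapsto\lambda^\wedge$ is correct; the paper encodes the same fact as Lemma~\ref{wedgeconj} together with invariance of entropy under conjugation.)

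Your second, self-contained route is correct and genuinely different from the paper's. You filter $R=\Z_{p^k}[X^{\pm1}]$ by the invariant subgroups $p^tR$, note that each quotient $p^tR/p^{t+1}R\cong\F_p[X^{\pm1}]$ carries multiplication by $\bar P=P\bmod p$ with support corresponding to $\supp^*(\lambda)$ (up to the harmless sign in the exponents), and apply the Addition Theorem $k-1$ times to get $h_{alg}(S)=k\,h_{alg}(\bar P\cdot-)=k\deg(S)\log p$; the field case is exactly the corollary of Theorem~\ref{Th4} for prime modulus. The paper deliberately avoids the Addition Theorem (it states it but announces it will not be applied): instead it uses Lemma~\ref{aboveabove}, $A(X)^{p^{k-1}}=A^*(X)^{p^{k-1}}$, to show that $S^{p^{k-1}}$ is permutive with extremal support $p^{k-1}l'$ and $p^{k-1}r'$ (Proposition~\ref{above}), applies the trajectory computation of Theorem~\ref{Th4} to $S^{p^{k-1}}$, and divides by $p^{k-1}$ via the Logarithmic Law. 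Both arguments bottom out in the same permutive computation; yours trades the power trick for the heavier but standard Addition Theorem. To make your version airtight you should state explicitly that the restriction $S\restriction_{p^tR}$, under the isomorphism $p^tR\cong\Z_{p^{k-t}}[X^{\pm1}]$, is again multiplication by the reduction of $P$ with unchanged reduced support (so the induction along the filtration is well founded), and that when $\supp^*(\lambda)=\emptyset$ one gets $\bar P=0$ and entropy $0$, consistent with $\deg(S)=0$.
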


 The above conclusion $h_{alg}(S)=\deg(S)\log m$ does not hold in the general case when $m$ is not a prime power (see Example~\ref{non-injex}). In fact, we have the following general result.

\begin{theorem} \label{halgFClCA}\label{finitaryteo}
Let $S\colon \Z_m^{(\Z)}\to \Z_m^{(\Z)}$ be a finitary linear cellular automaton.
Let $m=p_1^{k_1}\cdots p_h^{k_h}$ be the prime factorization of $m$, and for every $i\in\{1,\ldots,h\}$, let $L_i:= \Z_{p_i^{k_i}}^{(\Z)}$ denote the direct factor of  $\Z_m^{(\Z)}\cong L_1\times\ldots\times L_h$. 
 Then each $S^{(i)}:=S\restriction_{L_i}\colon L_i\to L_i$  is a finitary linear cellular automaton and 
$$h_{alg}(S)=\sum_{i=1}^h \deg(S^{(i)})\log p_i^{k_i}.$$
\end{theorem}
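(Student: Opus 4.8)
The plan is to reduce the general statement to the prime-power case already handled in Theorem~\ref{Th5}, using two standard facts: the behaviour of algebraic entropy under finite direct products, and the compatibility of the CRT decomposition $\Z_m\cong\Z_{p_1^{k_1}}\times\cdots\times\Z_{p_h^{k_h}}$ with the cellular-automaton structure. First I would verify that, under the canonical isomorphism $\Z_m^{(\Z)}\cong L_1\times\cdots\times L_h$ (which is just the direct sum of the CRT isomorphisms applied coordinatewise), the map $S$ decomposes as $S=S^{(1)}\times\cdots\times S^{(h)}$ with each $S^{(i)}:=S\restriction_{L_i}$ well-defined. This is where the linearity is essential: writing $S=S_\lambda$, the local rule $f(x)=\sum_i\lambda_i x_i$ respects the CRT splitting because each $\Z_m\to\Z_{p_i^{k_i}}$ is a ring homomorphism, so multiplication by $\lambda_i$ descends to multiplication by the image $\lambda_i^{(i)}$ of $\lambda_i$ in $\Z_{p_i^{k_i}}$. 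Hence each $S^{(i)}$ is the finitary linear cellular automaton $S_{\lambda^{(i)}}$ on $\Z_{p_i^{k_i}}^{(\Z)}$, where $\lambda^{(i)}=(\lambda_n^{(i)})_{n\in\Z}$.

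Next I would invoke the additivity of algebraic entropy on finite direct products: for endomorphisms $\phi_i$ of discrete abelian groups $G_i$, one has $h_{alg}(\phi_1\times\cdots\times\phi_h)=\sum_{i=1}^h h_{alg}(\phi_i)$. This is a known property of the algebraic entropy (the ``Addition Theorem'' for direct sums, as established in~\cite{DGB2,DGSZ}; in the present situation only the trivial case of a finite product of the groups is needed, so no subtle extension argument is required). Applying it to $S=S^{(1)}\times\cdots\times S^{(h)}$ gives $h_{alg}(S)=\sum_{i=1}^h h_{alg}(S^{(i)})$.

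Finally, each $S^{(i)}\colon L_i\to L_i$ is a finitary linear cellular automaton on $\Z_{p_i^{k_i}}^{(\Z)}$ with $m_i:=p_i^{k_i}$ a prime power, so Theorem~\ref{Th5} applies directly and yields $h_{alg}(S^{(i)})=\deg(S^{(i)})\log p_i^{k_i}$. Substituting into the previous display gives exactly $h_{alg}(S)=\sum_{i=1}^h\deg(S^{(i)})\log p_i^{k_i}$, as claimed.

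The main obstacle is the first step: one must check carefully that the CRT decomposition is genuinely an isomorphism of \emph{topological/discrete groups compatible with the cellular-automaton structure}, i.e. that $S^{(i)}$ really is $S_{\lambda^{(i)}}$ and not merely an abstract endomorphism of $L_i$. The subtlety is that passing from $\lambda_i\in\Z_m$ to its image in $\Z_{p_i^{k_i}}$ can change the reduced support (for instance an entry $\lambda_i$ coprime to $m$ maps to a unit in every factor, but an entry with $(\lambda_i,m)>1$ may still be a unit in some factors and not others), which is precisely why $\deg(S)$ does \emph{not} in general equal $\sum_i\deg(S^{(i)})$ times a common factor, and why the formula must be stated factor by factor. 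Once the decomposition $S=\prod_i S^{(i)}$ is justified, the remaining two steps are immediate citations.
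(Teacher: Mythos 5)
Your proposal is correct and follows essentially the same route as the paper: the paper establishes the primary factorization $S=S^{(1)}\times\cdots\times S^{(h)}$ in its preliminaries, then combines invariance under conjugation with the weak Addition Theorem to get $h_{alg}(S)=\sum_i h_{alg}(S^{(i)})$, and finishes by applying Theorem~\ref{Th5} to each prime-power factor, exactly as you do. Your careful attention to why each $S^{(i)}$ is genuinely the finitary linear cellular automaton $S_{\lambda^{(i)}}$ (and why the reduced supports can differ across factors) matches the paper's discussion of the primary components.
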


As a consequence of Theorems~\ref{dualCAintro} and~\ref{finitaryteo} and the Bridge Theorem~\ref{BT}, we get exactly the value of the topological entropy of a linear cellular automaton $T\colon \Z_m^{\Z}\to \Z_m^{\Z}$ as computed in~\cite[Theorem 2]{DMM} (see Corollary~\ref{htopFClCA}). Other applications will be given below.

\medskip
The paper is organized as follows.

The initial \S\ref{prelsec} contains preliminary results and discussions. In particular, in \S\ref{pd-sec} we briefly recall what we need about Pontryagin duality. 

In \S\ref{Background} we recall the formal power series representation of configurations $c\in\Z_m^\Z$ observing that the finite formal power series are exactly the elements of the ring $R_m:=\Z_m[X,X^{-1}]$ of the Laurent polynomials on $\Z_m$. For any $A(X)\in R_m$, we introduce its reduced Laurent polynomial $A^*(X)\in R_m$, which allows us to 
define the notion of reduced degree $\deg^*(A(X))$ of $A(X)$.

In \S\ref{Background*} first we prove the counterpart of Theorem~\ref{CHTheorem} for one-dimensional finitary linear cellular automata on $\Z_m$. Theorem~\ref{CHTheorem} (and its counterpart, respectively) allows us to see the family $\mathrm{CA}(\Z_m)$ (respectively, $\mathrm{CA}_f(\Z_m)$) of all one-dimensional (finitary) linear cellular automata on $\Z_m$ as a $\Z_m$-algebra generated by the shifts $\sigma_m$ and $\beta_m=\sigma_m^{-1}$. Then, we recall the formal power series representation $A_T(X)\in R_m$ (respectively, $A_S(X)\in R_m$) of a one-dimensional (finitary) linear cellular automaton $T$ (respectively, $S$) on $\Z_m$, and this association gives a $\Z_m$-algebra isomorphism between $\mathrm{CA}(\Z_m)$ (respectively, $\mathrm{CA}_f(\Z_m)$) and $R_m$ such that $\deg(T)=\deg^*(A_T(X))$ (respectively, $\deg(S)=\deg^*(A_S(X))$).

Finally, \S\ref{primaryfactorization} is dedicated to the primary factorization of a one-dimensional (finitary) linear cellular automaton, which is fundamental for the computation of its topological (respectively, algebraic) entropy.

\smallskip
In \S\ref{like:a:scalar} we prove Theorem~\ref{dualCAintro}. 
As an immediate corollary, by exploiting the properties of Pontryagin duality, we conclude that the linear cellular automaton $T_{\lambda}\colon\Z_m^\Z\to\Z_m^\Z$ is injective (respectively, surjective) if and only if the finitary linear cellular automaton $S_{\lambda}\colon\Z_m^{(\Z)}\to \Z_m^{(\Z)}$ is surjective (respectively, injective).

\smallskip
In \S\ref{sec:InjSurj} we prove that surjective one-dimensional finitary linear cellular automata on $\Z_m$ are necessarily invertible with inverse a one-dimensional finitary linear cellular automaton. As a consequence of this result and Theorem~\ref{dualCAintro}, we get an alternative proof of the known fact that every injective one-dimensional linear cellular automaton on $\Z_m$ is surjective (compare this with the above mentioned surjunctivity of $\Z$).

\smallskip 
In \S\ref{halgS} we compute the algebraic entropy of a one-dimensional finitary linear cellular automaton $S$ on $\Z_m$. In several steps we arrive at the proof of Theorem~\ref{finitaryteo}: in \S\ref{SSec1} we prove it in case $S$ is permutive, in \S\ref{Sec2} we treat the case when $m$ is a power of a prime and reduce to the permutive case by using the properties of the formal power series representation, and then we settle the general case by exploiting the properties of the algebraic entropy, recalled in \S\ref{Sec:Feb26}.

 The last \S\ref{finalsec} contains some final remarks and open problems. First, in \S\ref{astheshift} we see that when $m=p^k$ for a prime $p$ and $k\in\N_+$,
the shifts are ``dominant" to the effect of the values of the algebraic entropy, in the sense that a one-dimensional finitary linear cellular automaton $S$ on $\Z_{p^k}$ has the same algebraic entropy as  $\sigma_{p^k}^{\deg(S)}$. 
This cannot be generalized to the case of an arbitrary $m$ and we characterize those one-dimensional finitary linear cellular automata on $\Z_m$ that have the same algebraic entropy of an integer power of the right shift $\sigma_m$.
 Finally, in \S\ref{lastsubsub}, we consider the stronger property for a one-dimensional finitary linear cellular automaton $S$ on $\Z_{m}$ to be conjugated to $\sigma_m^n$ for some $n\in\Z$. 
%
%The problem in general remains open, while in case $m=p$ is a prime, we show that $S$ and $\sigma$ are conjugated if and only if $S = \sigma^{\pm 1}$.
%
%characterize the $S$ that are conjugated to $\sigma_p$ as those with $\deg(S)=1$, namely, $S=\sigma^{\pm1}$.

\subsection*{Notation and terminology}

We denote by $\N$ the set of natural numbers, by $\Z$ the set of integers and by $\N_+$ the set of strictly positive integers.

For $i \in \Z$, we denote by $e_i \in L$ the element whose $n$-th coordinate is, for every $n \in \Z$,
\[(e_i)_n =\begin{cases}
1 & \text{if } n =i,\\
0 & \text{otherwise}.
\end{cases}\]
We denote by $(\Z_m)_i = \{ x \in L : \supp(x) \subseteq \{i\}\}$ the subgroup of $L$ generated by $e_i$. So, $\Z_m^\Z = \prod_{i\in \Z}(\Z_m)_i$ and $\Z_m^{(\Z)}=\bigoplus_{i\in\Z}(\Z_m)_i$.

\section{Preliminaries}\label{prelsec}

\subsection{Background on Pontryagin duality}\label{pd-sec}

We recall here the definitions and properties related to Pontryagin duality needed in this paper, following~\cite{ADGB}.

For a locally compact abelian group $L$, the Pontryagin dual $\widehat L$ of $L$ is the group of all continuous homomorphisms (i.e., characters) $\chi\colon L\to \mathbb T:=\R/\Z$, endowed with the compact-open topology; $\widehat L$ is a locally compact abelian group as well.
Moreover, for a continuous homomorphism $\phi\colon L\to M$ of locally compact abelian groups, the dual of $\phi$ is the continuous homomorphism $\widehat\phi\colon\widehat M\to \widehat L$, such that $\widehat\phi(\chi)=\chi\circ\phi$ for every $\chi\in\widehat M$. In particular, denoting by $\mathcal L$ the category of all locally compact abelian groups and their continuous homomorphisms, $\widehat{\ }\colon\mathcal L\to \mathcal L$ is a contravariant functor.

 The Pontryagin duality Theorem asserts that every locally compact abelian group $L$ is canonically isomorphic to its bidual $\widehat{\widehat L}$, by means of the canonical topological isomorphism $\omega_L\colon L\to\widehat{\widehat L}$ such that, for every $x\in L $ and $\chi\in\widehat L$, $\omega_L(x)(\chi)=\chi(x)$. 
More precisely, $\omega$ is a natural equivalence from the identity functor $1_\mathcal L$ to the functor $\widehat{\widehat{\ }}\colon \mathcal L\to \mathcal L$.

In this paper, $L$ will always be either discrete or compact. Recall that $L$ is finite if and only if $\widehat L$ is finite, and in this case $\widehat L\cong L$. 
 On the other hand, $L$ is compact precisely when $\widehat L$ is discrete, and under this assumption $L$ is totally disconnected if and only if $\widehat L$ is torsion.  The Potryagin dual group of a direct product $\prod_{i\in I}L_i$ of compact abelian groups is (isomorphic to) the direct sum $\bigoplus_{i\in I}\widehat L_i$ of the dual groups of the $L_i$, while the Potryagin dual group of a direct sum $\bigoplus_{i\in I} D_i$ of discrete abelian groups is  (isomorphic to) the direct product $\prod_{i\in I}\widehat D_i$.
 
In this paper we are mainly interested in the following very special case.

\begin{example}
The dual of the totally disconnected compact abelian group $\Z_m^\Z$ is, up to isomorphism, the discrete torsion abelian group $\Z_m^{(\Z)}$.
\end{example}

\subsection{Formal power series}\label{Background}

The formal power series (briefly, fps) provide a practical notation to deal with one-dimensional linear cellular automata (see~\cite[Section 3]{ION} and \S\ref{Background*}). More precisely, each configuration $c =( c_i)_{i\in\Z}\in \Z_m^\Z$ can be encoded as the fps $P_{c}(X)$ defined by
\[P_{c}(X):=\sum_{i\in \Z} c_i X^{i}.\]

\begin{remark}\label{easy:remark}
Clearly, $P_{c^\wedge}(X)= P_c (X^{-1})$ for every $c\in\Z_m^\Z$.
\end{remark}

For $c\in\Z_m^\Z$, we are going to use the \emph{support} $\supp(P_c(X)):= \supp(c)$ and the {\em reduced support} $\supp^*(P_c(X)):= \supp^*(c)$, respectively, of $P_c(X)$,  as already defined in the introduction. We briefly say that a fps $P_c(X)$ is \emph{finite} if it has a finite support (obviously this occurs exactly when $c \in \Z_m^{(\Z)}$ is a finite configuration). 

\smallskip 
The fps used to represent configurations in $\Z_m^{\Z}$ have no limitation on the support, unlike the fps commonly used  in commutative algebra and elsewhere, where they are supposed to have bounded from below supports which enables one to multiply them. In particular, a finite fps $A(X)$ is actually a Laurent polynomial, i.e., $A(X) \in R_m:= \Z_m[X,X^{-1}]$, where $R_m$ is the ring of Laurent polynomials with coefficients in $\Z_m$. 

\smallskip

The next definition is motivated by the dynamical properties of Laurent polynomials.

\begin{definition}\label{Def:A*}
 For $A(X)\in R_m$, the {\em reduced Laurent polynomial} of $A(X)$ is 
\[A^*(X):= \sum_{i\in \supp^*(A(X))} \lambda_i X^{i},\] 
where $A^*(X):= 0$ if $\supp^*(A(X))= \emptyset$. 
\end{definition}

Here comes a first application of the reduced Laurent polynomial.

\begin{lemma}\label{aboveabove} 
Let $m=p^k$ with $p$ prime and $k\in\N_+$, and let $A(X)\in R_m$. Then 
$
A (X)^{p^{k-1}} = A^*(X)^{p^{k-1}}.
$
\end{lemma}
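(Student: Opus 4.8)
The plan is to reduce the claim to a statement about coefficients modulo $p^k$ and exploit the Frobenius-type behaviour of $p$-th powers in the commutative ring $R_m = \Z_m[X,X^{-1}]$ when $m = p^k$. First I would write $A(X) = A^*(X) + B(X)$, where $B(X) := A(X) - A^*(X) = \sum_{i \in \supp(A(X)) \setminus \supp^*(A(X))} \lambda_i X^i$; by definition of the reduced support, every coefficient $\lambda_i$ appearing in $B(X)$ satisfies $(\lambda_i, m) = (\lambda_i, p^k) > 1$, i.e. $p \mid \lambda_i$ in $\Z$. Hence every coefficient of $B(X)$ lies in the ideal $p\Z_m \leq \Z_m$.

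Next I would expand $A(X)^{p^{k-1}} = (A^*(X) + B(X))^{p^{k-1}}$ by the (multinomial) binomial theorem in the commutative ring $R_m$. The key arithmetic input is that for $1 \le j \le p^{k-1}$ the binomial coefficient $\binom{p^{k-1}}{j}$ is divisible by $p^{k-1-v_p(j)}$, where $v_p$ is the $p$-adic valuation (Kummer's theorem, or the elementary estimate $v_p\bigl(\binom{p^{k-1}}{j}\bigr) = k-1 - v_p(j)$). In the term $\binom{p^{k-1}}{j} A^*(X)^{p^{k-1}-j} B(X)^j$, the factor $B(X)^j$ contributes a coefficient-divisibility by $p^j$ (each of the $j$ factors contributes one $p$), while the binomial coefficient contributes $p^{k-1-v_p(j)}$. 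So every coefficient of this term is divisible by $p^{\,j + k - 1 - v_p(j)}$ in $\Z$. Since $j \ge p^{v_p(j)} \ge 1 + v_p(j)$ for all $j \ge 1$ (as $p^t \ge 1 + t$), we get $j + k - 1 - v_p(j) \ge k$, so each such coefficient vanishes modulo $p^k = m$. Therefore every term with $j \ge 1$ is zero in $R_m$, leaving only the $j = 0$ term, which is exactly $A^*(X)^{p^{k-1}}$.

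I expect the main (and only genuine) obstacle to be the valuation bookkeeping in the previous paragraph: one must carefully combine the $p$-adic valuation of the binomial coefficient $\binom{p^{k-1}}{j}$ with the valuation contributed by $B(X)^j$ and verify that the sum is always $\ge k$, handling uniformly the edge cases $j = p^{k-1}$ (where $\binom{p^{k-1}}{j} = 1$ but $B(X)^j$ alone already gives valuation $p^{k-1} \ge k$) and small $j$. Everything else — the splitting $A = A^* + B$, the commutativity of $R_m$ needed to apply the binomial theorem, and the identification of the surviving term — is routine. One clean way to package the valuation step is the elementary inequality $p^t \ge t + 1$ for all integers $t \ge 0$ (with equality only at $t = 0$ when $p = 2$ fails to be sharp, but the inequality always holds), applied with $t = v_p(j)$ together with $j \ge p^{v_p(j)}$.
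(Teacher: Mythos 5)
Your proof is correct. The decomposition is the same as the paper's --- you write $A(X)=A^*(X)+B(X)$ with every coefficient of $B(X)$ divisible by $p$, which is exactly the paper's $A(X)=A^*(X)+pA_1(X)$ --- but the way you kill the error terms is genuinely different. The paper iterates the $p$-th power: from $A=A^*+pA_1$ it deduces $A^p=(A^*)^p+p^2A_2$, then $A^{p^2}=(A^*)^{p^2}+p^3A_3$, and so on, gaining one factor of $p$ in the error at each of the $k-1$ steps; the only arithmetic input is that $p\mid\binom{p}{j}$ for $1\le j\le p-1$. You instead expand $\bigl(A^*+B\bigr)^{p^{k-1}}$ in one shot and need the sharper valuation bound $v_p\bigl(\binom{p^{k-1}}{j}\bigr)\ge k-1-v_p(j)$ (which follows from $j\binom{p^{k-1}}{j}=p^{k-1}\binom{p^{k-1}-1}{j-1}$, so you do not even need the full Kummer equality), combined with $j-v_p(j)\ge p^{v_p(j)}-v_p(j)\ge 1$. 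Your route trades the paper's painless induction for a single explicit computation with heavier but entirely standard valuation bookkeeping; both are complete, and your inequality chain $j+k-1-v_p(j)\ge k$ is verified correctly, including the endpoint $j=p^{k-1}$.
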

\begin{proof} 
There exists $A_1(X)\in R_m$ such that 
$$A(X)=A^*(X) + pA_1(X).$$
Then $A(X)^p = A^*(X)^p + p^2A_2(X)$ for an appropriate $A_2(X)\in R_m$, and similarly $A(X)^{p^2} = A^*(X)^{p^2} + p^3A_3(X)$ for an appropriate $A_3(X)\in R_m$, etc. By induction this gives the required equality. 
\end{proof}

Fixed $A(X)\in R_m$, one can write $A^*(X)$ as a sum $$A^*(X)= A_-(X^{-1}) + A_+(X)$$ of two polynomials $A_-(X^{-1})\in\Z_m[X^{-1}]$ and $A_+(X)\in\Z_m[X]$, both with coefficients coprime to $m$. Then the degrees of the polynomials $\deg(A_+(X))$ and $\deg (A_-(X))$ are well defined in case they are non-zero; for the sake of technical convenience, we set $\deg(A_+(X))=0$ and  $\deg (A_-(X))=0$, when these polynomials are zero. 

\begin{definition}\label{Def/Rem:grado} For $A(X)\in R_m$, the \emph{reduced degree} of $A(X)$ is
$$\deg^*(A(X)):= \deg(A^*(X)):=  \deg(A_+) + \deg (A_-).$$
\end{definition}

Fix a prime factorization $m=p_1^{k_1}\cdots p_h^{k_h}$ of $m$, and for every $i\in\{1,\ldots,h\}$ let $m_i=p_i^{k_i}$.
Clearly, there is a ring isomorphism $\iota\colon\Z_m\to \Z_{m_1}\times\ldots\times \Z_{m_h}$, defined by $1\mapsto (1,\ldots,1)$, and for every $i\in\{1,\ldots,h\}$, let $\pi_i\colon \Z_m\to \Z_{m_i}$ be the canonical projection, that is, the composition of $\iota$ with the $i$-eth canonical projection $\Z_{m_1}\times\ldots\times \Z_{m_h}\to \Z_{m_i}$, so $\pi_i(a)$, for $a\in \Z_m$, is the remainder of $a$ modulo $m_i$. 
Since $R_m$ coincides with the subgroup $\Z_m^{(\Z)}$ of $\Z_m^\Z$, for every $i\in\{1,\ldots,h\}$,  the power of the projection $\pi_i^\Z\colon \Z_m^{\Z}\to \Z_{m_i}^\Z$ 
takes $R_m$ to $R_{m_i}$, hence  the restriction $\varpi_i=\pi_i^\Z\restriction_{R_m}\colon R_m \to R_{m_i}$ is a surjective ring homomorphism. For $\lambda \in\Z_m^{(\Z)}=R_m$ we write $\bar \lambda_i:= \varpi_i(\lambda)$, so that for $A(X)=\sum_{i=l}^r\lambda_i X^i \in R_m$,
\begin{equation}\label{Ai}
A_i(X):= \varpi_i (A(X))=\sum_{i=l}^r\bar\lambda_i X^i \in R_{m_i}.
\end{equation}
Putting $ P:= R_{m_1}\times\ldots\times R_{m_h}$ and $$j(A(X))=(A_1(X),\ldots, A_h(X))$$
we obtain a ring homomorphism 
\begin{equation}\label{jeq}
j\colon R_m\to P
\end{equation}
such that  the $i$-eth canonical projection $\tilde\pi_i\colon P \to R_{m_i}$ satisfies $\varpi_i= \tilde\pi_i \circ j$.  
Since $\bigcap_{i=1}^h \ker \varpi_i = \{0\}$ (as $\ker \varpi_i= (m/m_i) R_m$ for every $i\in\{1,\ldots,h\}$), $j\colon R_m\to P$ is a monomorphism, and actually, a ring isomorphism.

Hence, $j$ is also a group isomorphism with respect to the additive structure of the rings $R_m$ and $P$. From this point of view, the underlying abelian group of $P$ is also a direct sum of the groups $R_{m_i}$, with respect to the canonical group embeddings $R_{m_i}\to P$ defined as follows: 
\begin{equation}\label{barAi}
A_i(X) \mapsto \widetilde A_i(X) :=  (0,\ldots,0,A_i(X),0,\ldots,0) \ \mbox{ for } A_i(X)\in R_{m_i}.
\end{equation}
They provide a group isomorphism $\bigoplus_{i=1}^h R_{m_i} \to P$ which allows us to recover $A(X) \in R_m$ from its projections
$A_i(X)$ as follows: 
$$j(A(X))= \widetilde A_1(X)+\ldots + \widetilde A_h(X).$$

As we will see below, when dealing with linear cellular automata, we are interested in their composition (which corresponds to the product
in $R_m$), more than their sum, so now we would like to obtain $j(A(X))$ also as a product $B_1(X)\cdot\ldots\cdot B_h(X)$ of appropriately defined factors  $B_i(X)\in P$. In other words, our next aim is to obtain the {\em multiplicative monoid}  $(R_m,\cdot,1_m)$ also as a {direct sum} of the monoids $(R_{m_i} ,\cdot,1_{m_i})$. To this end we define the canonical monoid embeddings $j_i\colon R_{m_i}\to P$ by 
\begin{equation}\label{Bi}
j_i \colon A_i(X) \mapsto B_i(X):=(1,\ldots,1,A_i(X),1,\ldots,1)\ \text{for}\ A_i(X)\in R_{m_i}.
\end{equation} 
They provide a monoid isomorphism $\bigoplus_{i=1}^h R_{m_i} \to P$ which allows us to recover $A(X) \in R_m$ from: 
$$j(A(X))= B_1(X)\cdot\ldots \cdot B_h(X). $$

\subsection{Formal power series and linear cellular automata}\label{Background*}

\begin{definition}
Let $\mathrm{CA}(\Z_m)$ (respectively, $\mathrm{CA}_f(\Z_m)$) be the family of all one-dimensional (finitary) linear cellular automata on $\Z_m$. 
\end{definition}

The aim of this subsection is to recall the use of fps in the study of  $\mathrm{CA}(\Z_m)$ and $\mathrm{CA}_f(\Z_m)$ by means of some new notions we propose here. 

The following natural counterpart of Theorem~\ref{CHTheorem} turns out to be quite useful.

\begin{theorem}\label{finitaryCH}
A group endomorphism $S\colon\Z_m^{(\Z)}\to\Z_m^{(\Z)}$ is a finitary linear cellular automaton if and only if $S$ is continuous with respect to the topology on $\Z_m^{(\Z)}$ induced by the product topology and $S\circ \sigma=\sigma\circ S$.
\end{theorem}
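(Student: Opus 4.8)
The plan is to prove the two implications separately; the converse is the one carrying the actual content, though neither direction is hard.

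For the direct implication I would just restrict the classical Curtis--Hedlund theorem. By Definition~\ref{Def:June6} a finitary linear cellular automaton $S=S_{f[l,r]\lambda}$ is, by construction, the restriction to $\Z_m^{(\Z)}$ of the linear cellular automaton $T=T_{f[l,r]\lambda}\colon\Z_m^\Z\to\Z_m^\Z$. Theorem~\ref{CHTheorem} gives that $T$ is continuous for the product topology on $\Z_m^\Z$ and that $T\circ\sigma=\sigma\circ T$. Since $\Z_m^{(\Z)}$ is $\sigma$-invariant, both the continuity of $S=T\rest_{\Z_m^{(\Z)}}$ with respect to the subspace topology and the identity $S\circ\sigma=\sigma\circ S$ are inherited immediately.

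For the converse, let $S\colon\Z_m^{(\Z)}\to\Z_m^{(\Z)}$ be a group endomorphism with $S\circ\sigma=\sigma\circ S$. The key observation I would use is that, because $\Z_m^{(\Z)}=\bigoplus_{i\in\Z}(\Z_m)_i$ is a \emph{direct sum}, $S$ is completely determined by the values $S(e_i)$ through the finite sum $S(x)=\sum_{i\in\Z}x_i\,S(e_i)$ for $x=(x_i)_{i\in\Z}$; moreover $e_i=\sigma^i(e_0)$, so $\sigma$-equivariance forces $S(e_i)=\sigma^i(S(e_0))$. I would then set $\mu:=S(e_0)=(\mu_n)_{n\in\Z}$, which is a \emph{finite} configuration since $S$ takes values in $\Z_m^{(\Z)}$, and compute, for $x\in\Z_m^{(\Z)}$ and $n\in\Z$,
\[
(S(x))_n=\sum_{i\in\Z}x_i\,(\sigma^i\mu)_n=\sum_{i\in\Z}x_i\,\mu_{n-i}=\sum_{j\in\Z}\mu_{-j}\,x_{n+j},
\]
the last step by the substitution $j=i-n$. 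Setting $\lambda:=\mu^\wedge$, i.e.\ $\lambda_j=\mu_{-j}$, the support $\supp(\lambda)$ is finite, hence contained in some interval $M=\{l,\ldots,r\}$, and defining $f\colon\Z_m^M\to\Z_m$ by $f(x_l,\ldots,x_r)=\sum_{j=l}^{r}\lambda_j x_j$ yields $(S(x))_n=f(x_{n+l},\ldots,x_{n+r})$ for all $n$. Thus $S$ is the restriction to $\Z_m^{(\Z)}$ of $T_{f[l,r]\lambda}$, i.e.\ $S=S_{f[l,r]\lambda}$ is a finitary linear cellular automaton.

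I do not expect a real obstacle here; the only thing to watch is the index bookkeeping, namely the shift convention $\sigma(e_i)=e_{i+1}$ and the resulting appearance of the \emph{reversed} tuple $\lambda=(S(e_0))^\wedge$. It is worth recording that this argument does not use the continuity hypothesis at all: the direct-sum structure of $\Z_m^{(\Z)}$ by itself forces the ``propagation'' of $S$ to be finite, which is precisely what continuity together with the compactness of $\Z_m^\Z$ is needed for in Theorem~\ref{CHTheorem}; in particular continuity of $S$ is automatic (it also follows a posteriori, $S$ being the restriction of the continuous map $T_{f[l,r]\lambda}$). If instead one wants a proof that genuinely invokes the stated hypothesis, one can mimic the classical argument: continuity of $S$ at $0$ produces a finite set $F\subseteq\Z$ such that $(S(x))_0$ depends only on $x\rest_F$, hence a homomorphism $g\colon\Z_m^F\to\Z_m$ (necessarily of the form $g(y)=\sum_{i\in F}\lambda_i y_i$), and $\sigma$-equivariance then gives $(S(x))_n=g\big((x_{n+i})_{i\in F}\big)$, from which the conclusion follows as before.
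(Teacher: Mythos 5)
Your proof is correct, but it takes a genuinely different route from the paper's. The paper proves the converse by invoking completeness: a continuous endomorphism $S$ of the dense subgroup $\Z_m^{(\Z)}$ extends uniquely to a continuous endomorphism $T$ of $\Z_m^{\Z}$, the relation $T\circ\sigma=\sigma\circ T$ is transferred to the extension by density, and then the classical Curtis--Hedlund Theorem~\ref{CHTheorem} is applied to $T$ so that $S=T\restriction_{\Z_m^{(\Z)}}$ is a finitary linear cellular automaton. Your argument instead works entirely inside the discrete direct sum: additivity plus $\sigma$-equivariance determine $S$ from the single finite configuration $\mu=S(e_0)$ via $S(x)=\sum_i x_i\sigma^i(\mu)$, and your index bookkeeping is right --- the resulting $\lambda=\mu^\wedge$ is consistent with Remark~\ref{suppAsuppT}(1), which records $T_\lambda(e_0)=\lambda^\wedge$. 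What your approach buys is the (correct) observation that the continuity hypothesis is redundant in this direction: the centralizer of $\sigma$ in all of $\mathrm{End}(\Z_m^{(\Z)})$ already equals $\mathrm{CA}_f(\Z_m)$, which sharpens Remark~\ref{New:Remark}(4); this hinges on the fact that $\Z_m^{(\Z)}$ is algebraically generated by the $e_i$, which fails for the compact product, so no contradiction with the genuine need for continuity in Theorem~\ref{CHTheorem} arises. What the paper's approach buys is economy and uniformity: it reuses the extension-by-density mechanism that relates $S$ and $T$ throughout the paper and avoids redoing the local-rule computation. Your fallback argument that does use continuity (an open subgroup in the kernel of $x\mapsto(S(x))_0$ yields a finite window $F$) is also a valid, more classical alternative.
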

\begin{proof} If $S=S_{f[l,r]\lambda}\in\mathrm{CA}_f(\Z_m)$, then $T=T_{f[l,r]\lambda}\in \mathrm{CA}(\Z_m)$, and so $T$ is continuous and $T\circ \sigma=\sigma\circ T$ by Theorem~\ref{CHTheorem}. As $S=T\restriction_{\Z_m^{(\Z)}}$, we conclude that $S$ is continuous and $S\circ \sigma=\sigma\circ S$.

Now assume that $S$ is continuous and $S\circ \sigma=\sigma\circ S$. As $\Z_m^\Z$ is the completion of $\Z_m^{(\Z)}$, there exists a unique continuous extension $T\colon \Z_m^\Z\to \Z_m^\Z$ of $S$ (see \cite[Corollary~7.1.20]{ADGB}). 
Analogously,  $\sigma^{-1}\circ S\circ \sigma$  has a unique continuous extension to $\Z_m^\Z$, that coincides with $\sigma^{-1}\circ T\circ\sigma$
since the continuous endomorphisms $T$ and $\sigma^{-1}\circ T\circ\sigma$ coincide on the dense subgroup $\Z_m^{(\Z)}$ 
with $S = \sigma^{-1}\circ S\circ \sigma$. 
Hence, $T\in \mathrm{CA}(\Z_m)$, by Theorem~\ref{CHTheorem}, and so $S\in \mathrm{CA}_f(\Z_m)$, 
being the restriction of $T$ to $\Z_m^{(\Z)}$.
\end{proof}

\begin{corollary} \label{CAalg}
$(\mathrm{CA}(\Z_m), + , \circ)$ and  $(\mathrm{CA}_f(\Z_m), + , \circ)$ are $\Z_m$-algebras. 
\end{corollary}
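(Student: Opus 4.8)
The plan is to realise both $\mathrm{CA}(\Z_m)$ and $\mathrm{CA}_f(\Z_m)$ as centralizers of the shift $\sigma$ inside suitable $\Z_m$-algebras of continuous endomorphisms, and then invoke the elementary fact that the centralizer of an element of a (unital) $\Z_m$-algebra is a $\Z_m$-subalgebra.

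First, for a topological abelian group $(G,\tau)$ with $mG=0$, I would observe that the set $\mathrm{End}_\tau(G)$ of continuous group endomorphisms of $G$ is a unital $\Z_m$-algebra with respect to pointwise addition $(\phi+\psi)(x):=\phi(x)+\psi(x)$, composition $\circ$ (with unit $\mathrm{id}_G$), and the scalar action $(a\cdot\phi)(x):=a\cdot\phi(x)$ for $a\in\Z_m$. Indeed, $\phi+\psi$ is again a continuous endomorphism since $G$ is abelian and addition on $G$ is continuous; $\phi\circ\psi$ is a continuous endomorphism since composites of continuous maps are continuous; associativity and the distributive laws are immediate; and $m\cdot\mathrm{id}_G=0$ forces $\mathrm{End}_\tau(G)$ to be a $\Z_m$-module, the scalar action being compatible with $+$ and $\circ$ because scalars are central (e.g. $(a\cdot\phi)\circ\psi=a\cdot(\phi\circ\psi)=\phi\circ(a\cdot\psi)$). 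Consequently, for any $\theta\in\mathrm{End}_\tau(G)$ the centralizer $C(\theta):=\{\phi\in\mathrm{End}_\tau(G):\phi\circ\theta=\theta\circ\phi\}$ contains $\mathrm{id}_G$ and is closed under $+$, $\circ$ and the scalar action, hence is a unital $\Z_m$-subalgebra of $\mathrm{End}_\tau(G)$.

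Now I would apply this twice, with $\theta=\sigma$ (which lies in $\mathrm{End}_\tau(G)$ in each case, being itself a linear cellular automaton). Taking $G=\Z_m^\Z$ with the product topology, Theorem~\ref{CHTheorem} (in the form recalled right after it) together with the definition of linearity gives that $T\colon\Z_m^\Z\to\Z_m^\Z$ is a linear cellular automaton precisely when $T$ is a continuous group endomorphism with $T\circ\sigma=\sigma\circ T$; that is, $\mathrm{CA}(\Z_m)=C(\sigma)$ inside $\mathrm{End}_\tau(\Z_m^\Z)$. Taking instead $G=\Z_m^{(\Z)}$ with the topology induced by the product topology, Theorem~\ref{finitaryCH} gives in the same way $\mathrm{CA}_f(\Z_m)=C(\sigma)$ inside $\mathrm{End}_\tau(\Z_m^{(\Z)})$. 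In both cases the previous paragraph shows the centralizer is a $\Z_m$-algebra, which is the assertion. (One may note in passing that the restriction map $T\mapsto T\restriction_{\Z_m^{(\Z)}}$, shown to be a bijection $\mathrm{CA}(\Z_m)\to\mathrm{CA}_f(\Z_m)$ in the proof of Theorem~\ref{finitaryCH}, respects $+$, $\circ$ and the scalar action — the only point needing a word being that $\Z_m^{(\Z)}$ is invariant under every linear cellular automaton — and is therefore a $\Z_m$-algebra isomorphism.)

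There is no genuine obstacle here: the statement is a bookkeeping consequence of the Curtis–Hedlund-type characterizations already established. The only points requiring care are (i) that ``commutes with the $\Z$-shift'' is the same as ``commutes with $\sigma$'', which is immediate since $\Z$ is generated by $1$ (and $\phi\sigma=\sigma\phi$ automatically yields $\phi\sigma^{-1}=\sigma^{-1}\phi$), and (ii) that in the finitary case one must use the topology on $\Z_m^{(\Z)}$ induced from the product topology, not the discrete one, so that Theorem~\ref{finitaryCH} applies and the relevant ring is $\mathrm{End}_\tau(\Z_m^{(\Z)})$ for that topology.
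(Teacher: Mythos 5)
Your proof is correct and follows essentially the same route as the paper: the paper also observes that the continuous endomorphisms form a $\Z_m$-algebra and that $+$, $\circ$ and scalars preserve the two characterizing properties from Theorems~\ref{CHTheorem} and~\ref{finitaryCH} (continuity and commutation with $\sigma$), i.e.\ that $\mathrm{CA}(\Z_m)$ and $\mathrm{CA}_f(\Z_m)$ are the centralizers of $\sigma$ in the respective algebras. Your write-up merely makes the centralizer formulation and the finitary-topology caveat more explicit than the paper does.
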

\begin{proof} 
Clearly, $(\mathrm{End}(\Z_m^\Z), + , \circ)$ is a $\Z_m$-algebra. As the operations $+$ and $\circ$ preserve the properties characterizing $\mathrm{CA}(\Z_m)$ within $\mathrm{End}(\Z_m^\Z)$ in Theorem~\ref{CHTheorem}, namely, continuity and commutation with $\sigma$, we get that $\mathrm{CA}(\Z_m)$ is a  $\Z_m$-algebra. 
To see that $(\mathrm{CA}_f(\Z_m), + , \circ)$ is a $\Z_m$-algebra apply a similar argument using Theorem~\ref{finitaryCH}. 
\end{proof}

\begin{definition}
Let $T=T_{f[l,r]\lambda}\colon \Z_m^{\Z}\to \Z_m^{\Z}$ be a linear cellular automaton. The \emph{finite fps associated to $T$} is
\begin{equation}\label{ffps}
A_T(X):= \sum_{i=l}^{r} \lambda_i X^{-i}\in R_m,
\end{equation} 
so $A_T(X) = 0$ when $T\equiv0$.
For $S=T\restriction_{\Z_m^{(\Z)}}$, the \emph{finite fps associated to $T$} is again $A_S(X):=A_T(X)$.
\end{definition}

\begin{remark}\label{suppAsuppT} 
Let $T=T_{f[l,r]\lambda} \in \mathrm{CA}(\Z_m)$.
\begin{itemize}
  \item[(1)] Definition \eqref{ffps} is inspired by the fact that $T(e_0) = \sum_{i=-r}^{-l} \lambda_{-i}e_i = \lambda^{\wedge}$ (see \eqref{Eqe_1} for a more general fact), so Remark~\ref{easy:remark} gives $ A_T(X) = P_{\lambda}(X^{-1}) = P_{ \lambda^{\wedge} }(X) = P_{T(e_0)}(X)$. 
  \item[(2)] One has $\supp(A_T(X)) = \supp(\lambda^{\wedge}) = -\supp(\lambda)$; analogously for $S=T\restriction_{\Z_m^{(\Z)}}$.
\end{itemize}
\end{remark}

Clearly, for any configuration $c\in \Z_m^\Z$, its image configuration $T(c)$ can be written as $P_{T(c)}(X)$.
The utility of the fps in the study of linear cellular automata is based on the following proposition, which generalizes the fact we pointed out in Remark~\ref{suppAsuppT}(1).

 \begin{theorem}\label{fpscor} Let $T\colon \Z_m^{\Z}\to \Z_m^{\Z}$ be a linear cellular automaton. 
\begin{itemize}
\item[(1)] For every $c\in\Z_m^\Z$,
\[P_{T(c)}(X) = A_T(X) P_c (X)=P_{T(e_0)}(X) P_c (X).\]
\item[(2)]  The maps
$$\rho\colon \mathrm{CA}(\Z_m)\to R_m,\ T\mapsto A_T(X),\quad\text{and}\quad \varrho\colon \mathrm{CA}_f(\Z_m)\to R_m,\ S\mapsto A_S(X),$$
are $\Z_m$-algebra isomorphisms preserving the degrees, i.e.,  
\begin{equation}\label{(3)}
\deg(T)= \deg^*(\rho(T))=\deg^*(A_T(X)) \ \mbox{ and } \ \deg(S)= \deg^*(\varrho(S))=\deg^*(A_S(X))
\end{equation}
for $T \in \mathrm{CA}(\Z_m)$ and $S \in \mathrm{CA}_f(\Z_m)$.
In particular, for $T, T_1,T_2 \in \mathrm{CA}(\Z_m)$ and   for every $n\in\N_+$, 
\begin{equation}\label{Laaast:Eq}
A_{T_1\circ T_2}(X)=A_{T_1}(X)A_{T_2}(X)\ \ \mbox{ and } \ \ A_{T^n}(X)=A_T(X)^n.
\end{equation}
\end{itemize}
\end{theorem}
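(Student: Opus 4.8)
The plan is to establish part (1) first, and then derive part (2) as a formal consequence. For part (1), I would start from the linearity of $T$ and the definition of the local rule. Writing $c = \sum_{j\in\Z} c_j e_j$, we have $T(c) = \sum_{j\in\Z} c_j\, T(e_j)$, where the sum is interpreted in the product topology (this is legitimate: $T$ is continuous and commutes with $\sigma$, and for finite configurations it is a genuine finite sum; the general case follows by density of $\Z_m^{(\Z)}$ in $\Z_m^\Z$). Since $T$ commutes with $\sigma$ and $e_j = \sigma^j(e_0)$ in the appropriate sense, one gets $T(e_j) = \sigma^j(T(e_0))$, so in fps notation $P_{T(e_j)}(X) = X^{j} P_{T(e_0)}(X)$. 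Summing, $P_{T(c)}(X) = \left(\sum_{j} c_j X^{j}\right) P_{T(e_0)}(X) = P_c(X)\, P_{T(e_0)}(X)$, and by Remark~\ref{suppAsuppT}(1) the latter equals $A_T(X) P_c(X)$. The one point requiring a little care is that multiplication of a bi-infinite fps $P_c(X)$ by the \emph{finite} fps $A_T(X)$ is well-defined (each coefficient of the product is a finite sum), so this identity makes sense even when $\supp(c)$ is unbounded; I would state this explicitly.

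For part (2), I would proceed in stages. First, $\rho$ and $\varrho$ are well-defined maps into $R_m$ by the definition of the associated finite fps. Additivity is immediate from $A_{T_1+T_2}(X) = A_{T_1}(X) + A_{T_2}(X)$, which follows coefficientwise from $\eqref{ffps}$ and the fact that $(T_1+T_2)(e_0) = T_1(e_0) + T_2(e_0)$; $\Z_m$-linearity is equally routine. Multiplicativity $A_{T_1\circ T_2}(X) = A_{T_1}(X) A_{T_2}(X)$ follows from part (1): applying it twice, $P_{(T_1\circ T_2)(c)}(X) = A_{T_1}(X) P_{T_2(c)}(X) = A_{T_1}(X) A_{T_2}(X) P_c(X)$, and taking $c = e_0$ (so $P_c(X) = 1$) identifies the product fps. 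The identity $A_{T^n}(X) = A_T(X)^n$ is then an induction on $n$. Bijectivity: $\rho$ is injective because $A_T(X) = 0$ forces $T(e_0) = 0$, hence $T(e_j) = 0$ for all $j$, hence $T \equiv 0$ on the dense subgroup $\Z_m^{(\Z)}$ and so $T \equiv 0$ by continuity; surjectivity because any $A(X) = \sum_{i=l}^r \lambda_i X^{-i} \in R_m$ is $A_T(X)$ for the cellular automaton $T_{f[l,r]\lambda}$ (with a harmless re-indexing of the support). The finitary case is handled by restriction, using Theorem~\ref{finitaryCH} in place of Theorem~\ref{CHTheorem}, together with the observation that $T$ restricts to a finitary cellular automaton and $A_S(X) = A_T(X)$ by definition; one should note that $\varrho$ is surjective because the same $T_{f[l,r]\lambda}$ restricts to $S_{f[l,r]\lambda}$.

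The degree-preservation claim $\eqref{(3)}$ is where the only genuine bookkeeping lies, and I expect it to be the main (though still modest) obstacle. By Definition~\ref{degree}, $\deg(T)$ is computed from $l' := \min\supp^*(\lambda)$ and $r' := \max\supp^*(\lambda)$; by Remark~\ref{suppAsuppT}(2) and the definition of $\supp^*$, one has $\supp^*(A_T(X)) = -\supp^*(\lambda)$, so $\min\supp^*(A_T(X)) = -r'$ and $\max\supp^*(A_T(X)) = -l'$. Now unwinding Definition~\ref{Def/Rem:grado}: writing $A^*_T(X) = A_-(X^{-1}) + A_+(X)$, the polynomial $A_+$ carries the nonnegative exponents of $A^*_T$ and $A_-(X^{-1})$ the negative ones, so $\deg(A_+) = \max(0, \max\supp^*(A_T(X))) = \max(0, -l')$ and $\deg(A_-) = \max(0, -\min\supp^*(A_T(X))) = \max(0, r')$. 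A case analysis on the signs of $l'$ and $r'$ — exactly the three cases $l' \le r' \le 0$, $0 \le l' \le r'$, and $l' < 0 < r'$ appearing in Definition~\ref{degree} — shows $\deg(A_+) + \deg(A_-)$ equals $-l'$, $r'$, and $r' - l'$ respectively, matching $\deg(T)$; the case $\supp^*(\lambda) = \emptyset$ gives $0$ on both sides. The identical argument with $S$ in place of $T$ settles the finitary case, completing the proof.
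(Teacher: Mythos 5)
Your proposal is correct, and part (2) follows the paper's argument exactly: multiplicativity is obtained by applying (1) twice and cancelling $P_c(X)$ (the paper does this for all $c$ at once, you specialize to $c=e_0$; both are fine), and the power formula is the same induction. The only real difference is in part (1): the paper verifies $A_T(X)P_c(X)=P_{T(c)}(X)$ by expanding the product of the two series directly and recognizing the local rule $f(c_{n+l},\dots,c_{n+r})$ in the $n$-th coefficient, whereas you decompose $c=\sum_j c_j e_j$ and use shift-equivariance $T(e_j)=\sigma^j(T(e_0))$ together with continuity and density of $\Z_m^{(\Z)}$. Your route is a legitimate repackaging of the same computation; the paper's version avoids any appeal to topology (it is a purely coefficientwise identity, valid since $A_T(X)$ is finite so every coefficient of the product is a finite sum), which is marginally cleaner for unbounded $\supp(c)$, while yours makes the conceptual role of shift-commutation explicit. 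You also spell out bijectivity of $\rho$ and the degree-preservation bookkeeping in \eqref{(3)} via the identities $\deg(A_+)=\max(0,-l')$ and $\deg(A_-)=\max(0,r')$; the paper leaves both as implicit consequences of the definitions, and your case analysis matches Definition~\ref{degree} correctly in all three cases and the empty-support case.
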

\begin{proof} (1) 
Both assertions are obvious when $T\equiv0$. Assume now that $T\not \equiv 0$ and compute 
\begin{align*}
A_T(X) P_c (X) &= \left( \sum_{i=l}^{r}\lambda_i X^{-i} \right) \left( \sum_{j\in \Z} c_j X^{j} \right) = \sum_{j\in \Z} c_j \sum_{i=l} ^{r}\lambda_i X^{j-i} 
= \sum_{j\in \Z} \sum_{i=l}^{r} \lambda_i c_j X^{j-i} = \\
&= \sum_{n\in \Z} \sum_{i=l}^r\lambda_i c_{n+i} X^n= \sum_{n\in \Z} f(c_{n+l},\ldots,c_{n+r}) X^n = P_{T(c)}(X).
\end{align*}

To prove the first assertion in (2), it is enough to check that $\rho$ is a $\Z_m$-algebra isomorphism. 
Since $\rho$ is obviously $\Z_m$-linear, it remains to check that $A_{T_1\circ T_2}(X)=A_{T_1}(X)A_{T_2}(X)$. Indeed, 
 for every $c\in\Z_m^\Z$, 
$$A_{T_1\circ T_2}(X)P_c(X)=P_{T_1\circ T_2(c)}(X)=P_{T_1(T_2(c))}(X)=A_{T_1}(X)P_{T_2(c)}(C)=A_{T_1}(X)A_{T_2}(X)P_c(X),$$
and so $A_{T_1\circ T_2}(X)=A_{T_1}(X)A_{T_2}(X)$. The second equation in \eqref{Laaast:Eq} follows from the first one by induction. 
\end{proof}

For example, by Theorem~\ref{fpscor}(2) the sum in $\mathrm{CA}(\Z_m)$ is the linear cellular automaton $T_{\lambda} + T_{\lambda'} = T_{\lambda + \lambda'}$.

\begin{remark}\label{New:Remark} 
\begin{enumerate}[(1)]
\item Theorem~\ref{fpscor}(1) says, in the language of module theory, that both $\Z_m^{\Z}$ and $\Z_m^{(\Z)}$ are $R_m$-modules and the module multiplication by $A(X)\in R_m$ has the same effect as the action of the (finitary) linear cellular automaton $T\colon\Z_m^{\Z}\to\Z_m^\Z$ with $A(X)=A_T(X)$ (respectively, $S\colon\Z_m^{(\Z)}\to \Z_m^{(\Z)}$ with $A(X)=A_S(X)$).

\item The equality in \eqref{(3)} gives $\deg(S\circ S') \leq \deg(S) + \deg(S')$ for $S,S' \in \mathrm{CA}_f(\Z_m)$, but equality may fail in general. (Actually, $\deg(S\circ S') = \deg(S) + \deg(S')$, provided $\supp^*(S) = \supp^*(S')$ and in particular, when $S'=S$, so  $\deg(S^n) = n\deg(S)$.) We are providing no proofs since we are not going to use these properties. 

\item The finite fps \eqref{ffps} associated to a non-zero $T=T_{f[l,r]\lambda} \in \mathrm{CA}(\Z_m)$ shows that $T$ is a linear combination of powers of the right shift $\sigma$, as  \[T = A_T(\sigma) = \sum_{i=l}^{r} \lambda_i \sigma^{-i}.\]
The same holds for $S\in\mathrm{CA}_f(\Z_m)$.

\item While $\mathrm{End}(\Z_m^{(\Z)})$ is non-commutative and has cardinality $\mathfrak c$, in view of Theorem~\ref{fpscor}(2), $\mathrm{CA}_f(\Z_m)$ is commutative and countable. Moreover, by Theorem~\ref{finitaryCH}, $\mathrm{CA}_f(\Z_m)$ coincides with the centralizer of $\sigma$ in the $\Z_m$-subalgebra of $\mathrm{End}(\Z_m^{(\Z)})$
consisting of continuous endomorphisms.
\end{enumerate}
\end{remark}

\subsection{The primary factorization}\label{primaryfactorization}

Let $T=T_{f[l,r]\lambda}\colon \Z_m^{\Z}\to \Z_m^{\Z}$ be a linear cellular automaton with memory set $M=\{l,\ldots,r\}$ and local rule $f\colon \Z_m^M\to \Z_m$. Let $p$ be a prime dividing $m$ and let $p^k$ be the maximum power of $p$ dividing $m$.  Denote by $\bar\lambda$ the projection 
of $\lambda\in\Z_m^\Z$ on $\Z_{p^k}^\Z$. Since $\Z_{p^k}^{\Z}$ is a fully invariant subgroup of $\Z_m^{\Z}$ (which is a direct summand), the restriction $T':=T\restriction_{\Z_{p^k}^{\Z}}$ is a linear cellular automaton (this follows for example from Theorem~\ref{finitaryCH}), with memory set $M$ and local rule $f'=f\restriction_{\Z_{p^k}^M}\colon \Z_{p^k}^M\to\Z_{p^k}$, that is, $T'=T_{f'[l,r]\bar\lambda}$. Consequently, for $S=T\restriction_{\Z_m^{(\Z)}}$, we get that $S':=S\restriction_{\Z_{p^k}^{(\Z)}}=S_{f'[l,r]\bar\lambda}$.
\begin{itemize}
\item[(1)] The minimal memory set $\supp(\bar\lambda)$ of $T'$ and $S'$ is contained in the minimal memory set $\supp(\lambda)$ of $T$ and $S$, and this containment can be strict, as, for example, $\supp(\bar\lambda)$ is empty when all coefficients of $\lambda$ are divisible by $p^k$ while $m\ne p^k$.
\item[(2)] On the other hand, $\supp^*(\lambda)\subseteq \supp^*(\bar\lambda)$ and also this containment can be strict (see Example~\ref{ex*}).
\end{itemize}

\begin{definition}
Let $T\colon \Z_m^{\Z}\to \Z_m^{\Z}$ be a linear cellular automaton, $S = T\restriction_{\Z_m^{(\Z)}} $ and let $m=p_1^{k_1}\cdots p_h^{k_h}$ be the prime factorization of $m$. For every $i\in\{1,\ldots,h\}$, let $K_i:=\Z_{p_i^{k_i}}^{\Z}$ and $L_i:=\Z_{p_i^{k_i}}^{(\Z)}$, and call
$$ T^{(i)}:=T\restriction_{K_i } \quad \text{and} \quad S^{(i)} = S\restriction_{L_i} $$
the \emph{$i$-eth primary components of} $T$ and $S$, respectively.
\end{definition}

 The proof of the following lemma,  collecting some basic properties of the primary components, is straightforward and follows from the discussion above. 
 For the sake of simplicity we consider only the case of  finitary linear cellular automata, also in the subsequent discussion. 
 
\begin{lemma} 
Let $S=S_{f[l,r]\lambda}\colon \Z_m^{(\Z)}\to \Z_m^{(\Z)}$ be a finitary linear cellular automaton, let $m=p_1^{k_1}\cdots p_h^{k_h}$ be the prime factorization of $m$, and, for every $i\in\{1,\ldots,h\}$, let $S^{(i)}\colon L_i\to L_i$ be the $i$-eth primary component of $S$. Then, for every $i\in\{1,\ldots,h\}$:
\begin{itemize}
\item[(1)] the local rule of $S^{(i)}$ is the restriction $f\restriction_{ \Z_{ p_i^{k_i} }^M }$, but we write $f$ in \eqref{asineqref} for the sake of simplicity;
\item[(2)] for $\bar\lambda^{(i)}$ the projection of $\lambda\in\Z_m^\Z$ on $L_i$, that is, $\bar\lambda^{(i)}=\pi_i^\Z(\lambda)$, 
\begin{equation}\label{asineqref}
S^{(i)}=S_{f[l,r]\bar\lambda^{(i)}};
\end{equation}
\item[(3)] $l,r\in\supp^*(\lambda)$ if and only if, for every $i\in\{1,\ldots,h\}$ one has $l,r\in\supp^*(\bar\lambda^{(i)})$;
\item[(4)]$\bigcap_{i=1}^h \supp^*(\bar\lambda^{(i)}) = \supp^*(\lambda) \subseteq \supp^*(\bar\lambda^{(i)}) \subseteq \supp(\bar\lambda^{(i)}) \subseteq \supp(\lambda)$.
\end{itemize}
\end{lemma}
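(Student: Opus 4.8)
The plan is to verify each of the four items directly from the definitions and the discussion immediately preceding the lemma, treating them as essentially bookkeeping about the projections $\pi_i^\Z\colon\Z_m^\Z\to\Z_{p_i^{k_i}}^\Z$ and their effect on supports and reduced supports. The main input is that $\Z_{p_i^{k_i}}^{(\Z)}$ is a direct summand of $\Z_m^{(\Z)}$ which is fully invariant (so $S$ restricts to it), together with the elementary number-theoretic fact linking divisibility by $p_i$ in $\Z$ to the gcd condition defining $\supp^*$.

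For item (1) and (2), I would invoke the paragraph preceding the lemma (the ``primary factorization'' discussion with $T'=T\restriction_{\Z_{p^k}^\Z}$): applying it with $p=p_i$, $k=k_i$ gives that $S^{(i)}=S\restriction_{L_i}$ is the finitary linear cellular automaton with memory set $M=\{l,\dots,r\}$ and local rule $f'=f\restriction_{\Z_{p_i^{k_i}}^M}$, associated to the coefficient vector $\bar\lambda^{(i)}=\pi_i^\Z(\lambda)$; this is exactly \eqref{asineqref}, modulo the harmless abuse of writing $f$ for $f'$. For item (4), the key observation is $\supp(\bar\lambda^{(i)})\subseteq\supp(\lambda)$, which holds because $\pi_i$ is a ring homomorphism fixing $0$, so $\lambda_n=0$ forces $\bar\lambda^{(i)}_n=0$; this was already noted in point (1) of the bulleted discussion above the definition of primary components. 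The inclusion $\supp^*(\lambda)\subseteq\supp^*(\bar\lambda^{(i)})$ (point (2) of that discussion) follows since $(\lambda_n,m)=1$ implies $(\lambda_n,p_i^{k_i})=1$, i.e.\ $p_i\nmid\lambda_n$, hence $(\bar\lambda^{(i)}_n,p_i^{k_i})=1$; the middle inclusion $\supp^*(\bar\lambda^{(i)})\subseteq\supp(\bar\lambda^{(i)})$ is automatic since a unit is nonzero. Finally, for the equality $\bigcap_{i=1}^h\supp^*(\bar\lambda^{(i)})=\supp^*(\lambda)$: one inclusion $\supseteq$ is the previous step applied for all $i$; for $\subseteq$, if $n\in\bigcap_i\supp^*(\bar\lambda^{(i)})$ then $p_i\nmid\lambda_n$ for every $i$, so $(\lambda_n,m)=1$ by the prime factorization of $m$, i.e.\ $n\in\supp^*(\lambda)$. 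This uses the CRT-style identification $\Z_m\cong\prod_i\Z_{p_i^{k_i}}$ already recorded via $\iota$ in \S\ref{Background}.

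Item (3) is then an immediate corollary of item (4): ``$l,r\in\supp^*(\lambda)$'' holds iff ``$l,r\in\bigcap_{i=1}^h\supp^*(\bar\lambda^{(i)})$'' (by the equality in (4)), and the latter is by definition of intersection equivalent to ``for every $i$, $l,r\in\supp^*(\bar\lambda^{(i)})$''. So (3) requires no separate argument once (4) is in place; I would simply cite (4).

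There is no serious obstacle here — the statement is explicitly flagged as ``straightforward'' and following from the preceding discussion. The only point requiring the slightest care is keeping the logical direction of the inclusions in (4) straight and being explicit that the gcd condition $(\lambda_n,m)=1$ is equivalent to ``$p_i\nmid\lambda_n$ for all $i$'', which is where the hypothesis that $m=p_1^{k_1}\cdots p_h^{k_h}$ is the full prime factorization gets used; everything else is a direct unwinding of definitions, and I would present it compactly rather than belaboring it.
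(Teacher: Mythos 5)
Your proof is correct and matches the paper's intent exactly: the paper gives no written argument, stating only that the lemma ``is straightforward and follows from the discussion above,'' and your proposal is precisely a careful unwinding of that preceding discussion (the restriction $T'=T\restriction_{\Z_{p^k}^{\Z}}$ for items (1)--(2), and the elementary gcd/projection bookkeeping for (3)--(4), with (3) correctly reduced to the equality in (4)).
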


\smallskip
Now we consider a finitary linear cellular automaton $S\colon \Z_m^{(\Z)}\to \Z_m^{(\Z)}$ and let for every $i\in\{1,\ldots,h\}$, $L_i=\Z_{m_i}^{(\Z)}$ and $S^{(i)}=S\restriction_{L_i}$. There is an isomorphism $\xi\colon\Z_m^{(\Z)}\to L_1\times\ldots\times L_h$ induced by $\iota$, 
witnessing that $S$ and $S^{(1)}\times\ldots\times S^{(h)}$ are conjugated, that is, $S=\xi^{-1}\circ(S^{(1)}\times\ldots\times S^{(h)})\circ \xi$. 
For every $i\in\{1,\ldots,h\}$, let $\bar S^{(i)}= \xi^{-1}\circ (S^{(i)}\times \prod_{j\neq i, j=1}^{h}id_{L_j})\circ \xi$, i.e., $\bar S^{(i)}$ is conjugated to $S^{(i)}\times \prod_{j\neq i, j=1}^{h}id_{L_j}$ by $\xi$:
$$\xymatrix{
\Z_m^{(\Z)}\ar[rr]^{S}\ar[d]_{\xi} & & \Z_m^{(\Z)}\ar[d]^\xi \\
L_1\times\ldots\times L_h \ar[rr]_{S^{(1)}\times\ldots\times S^{(h)}}& & L_1\times\ldots\times L_h} \quad \quad\quad
\xymatrix{\Z_m^{(\Z)}\ar[rr]^{\bar S^{(i)}}\ar[d]_{\xi} & & \Z_m^{(\Z)}\ar[d]^\xi \\
L_1\times\ldots\times L_h\  \ \ar[rr]_{\ \ S^{(i)}\times \prod_{j\neq i, j=1}^{h}id_{L_j}}& & \ \ L_1\times\ldots\times L_h \, .
}$$
Then obviously
\begin{equation}\label{LAAAAAAAAAST:Eq}
\bar S^{(1)}\circ\ldots\circ \bar S^{(h)} =\xi^{-1}\circ( S^{(1)}\times\ldots\times S^{(h)})\circ \xi = S.
\end{equation}

In the sequel we will identify $\Z_m^{\Z}$ with $K_1\times\ldots\times K_h$ and $L$ with  $L_1\times\ldots\times L_h$, so also $T$ with $T^{(1)}\times\ldots\times T^{(h)}$ and $S$ with $S^{(1)}\times\ldots\times S^{(h)}$, and we formalize this as follows. 

\begin{definition}
Let $T\colon \Z_m^{\Z}\to \Z_m^{\Z}$ be a linear cellular automaton, let $S=T\restriction_{\Z_m^{(\Z)}}$ and let $m=p_1^{k_1}\cdots p_h^{k_h}$ be the prime factorization of $m$. Then  $T=T^{(1)}\times\cdots\times T^{(h)}$ is the \emph{primary factorization of} $T$ and $S=S^{(1)}\times\cdots\times S^{(h)}$ is the \emph{primary factorization of} $S$.
\end{definition}

In the following result we identify also $R_m$ and $R_{m_1}\times\ldots\times R_{m_h}$ by means of the ring isomorphism $j$ from \eqref{jeq}.

\begin{proposition}
 Let $S\in\mathrm{CA}_f(\Z_m)$ and let $A(X)=\varrho(S)=A_S(X)\in R_m$.
Then, for every $i\in\{1,\ldots, h\}$, using the isomorphisms $\varrho_i\colon \mathrm{CA}_f(\Z_{m_i})\to R_{m_i}$ 
and $\varrho\colon\mathrm{CA}_f(\Z_m)\to R_m$ from Theorem~\ref{fpscor}(2),
\begin{itemize}
\item[(1)] $\varrho_i(S^{(i)})=A_i(X)\in R_{m_i},$ where $A_i(X)$ is defined in \eqref{Ai};
\item[(2)] $\varrho(\bar S^{(i)})=B_i(X)\in R_{m},$ where $B_i(X)$ is defined in \eqref{Bi}. 
\end{itemize}
\end{proposition}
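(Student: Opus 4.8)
The plan is to deduce part (1) from an explicit computation with the finite fps associated to the primary component $S^{(i)}$, and then to obtain part (2) almost formally from part (1) together with the (immediate) description of the primary factorization of $\bar S^{(i)}$.

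For part (1), I would write $S=S_{f[l,r]\lambda}$, so that by \eqref{ffps} we have $A(X)=\varrho(S)=A_S(X)=\sum_{n=l}^{r}\lambda_n X^{-n}$. By \eqref{asineqref} the $i$-th primary component of $S$ is $S^{(i)}=S_{f[l,r]\bar\lambda^{(i)}}$ with $\bar\lambda^{(i)}=\pi_i^\Z(\lambda)$, hence, again by \eqref{ffps}, $\varrho_i(S^{(i)})=A_{S^{(i)}}(X)=\sum_{n=l}^{r}\pi_i(\lambda_n)X^{-n}$. On the other hand $\varpi_i=\pi_i^\Z\restriction_{R_m}$ acts on a Laurent polynomial by applying $\pi_i$ to each coefficient, so by \eqref{Ai} we get $A_i(X)=\varpi_i(A(X))=\sum_{n=l}^{r}\pi_i(\lambda_n)X^{-n}$; the two Laurent polynomials coincide, which is exactly (1). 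A more structural way to see the same thing, which I would also mention, is that (1) amounts to the commutativity of the square whose edges are $\varrho$, $\varrho_i$, the restriction ring homomorphism $S\mapsto S^{(i)}$ (a ring homomorphism since $L_i$ is invariant), and $\varpi_i$; since all four are ring homomorphisms and, by Theorem~\ref{fpscor}(2) together with $R_m=\Z_m[X,X^{-1}]$, the ring $\mathrm{CA}_f(\Z_m)$ is generated by $\sigma_m$ and $\sigma_m^{-1}$, it would be enough to check commutativity on those two generators, using $\sigma_m^{(i)}=\sigma_{m_i}$ and $\varrho(\sigma_m)=X=\varpi_i(X)$.

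For part (2), I would first record that $\bar S^{(i)}$ indeed belongs to $\mathrm{CA}_f(\Z_m)$, so that $\varrho$ applies to it: it is continuous, and conjugation by $\xi$ turns it into $S^{(i)}\times\prod_{j\neq i}id_{L_j}$, which commutes with $\sigma_{m_1}\times\cdots\times\sigma_{m_h}=\xi\circ\sigma_m\circ\xi^{-1}$; here one uses that $\xi$, being induced by the coordinatewise ring isomorphism $\iota$, commutes with the shift and carries each summand $L_k\leq\Z_m^{(\Z)}$ onto the $k$-th direct factor of $L_1\times\cdots\times L_h$. Hence $\bar S^{(i)}$ commutes with $\sigma_m$ and Theorem~\ref{finitaryCH} gives $\bar S^{(i)}\in\mathrm{CA}_f(\Z_m)$. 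The same description of $\xi$ makes the primary factorization of $\bar S^{(i)}$ transparent: $(\bar S^{(i)})^{(i)}=S^{(i)}$ and $(\bar S^{(i)})^{(k)}=id_{L_k}$ for every $k\neq i$. Now I would apply part (1) to $\bar S^{(i)}$ in place of $S$: for each $k\in\{1,\ldots,h\}$ one obtains $\varpi_k(\varrho(\bar S^{(i)}))=\varrho_k((\bar S^{(i)})^{(k)})$, which equals $A_i(X)$ if $k=i$ (by part (1) for $S$ itself) and equals $1_{m_k}$ if $k\neq i$. Since $\varpi_k=\tilde\pi_k\circ j$ for every $k$, the tuple $(\varpi_1(A'),\ldots,\varpi_h(A'))$ recovers $j(A')$ for any $A'\in R_m$; therefore $j(\varrho(\bar S^{(i)}))=(1,\ldots,1,A_i(X),1,\ldots,1)=B_i(X)$ by \eqref{Bi}, that is, $\varrho(\bar S^{(i)})=B_i(X)$ under the identification $R_m\cong P$ afforded by $j$.

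I expect no serious obstacle here: the single place where an actual computation is done is the identity $\varrho_i(S^{(i)})=\varpi_i(\varrho(S))$ of part (1), and it is just a matching of coefficients through $\pi_i$. Everything else is bookkeeping, and the only point requiring a little care is the twofold role of $L_i$ — as a primary summand of $\Z_m^{(\Z)}$ and as a direct factor of $L_1\times\cdots\times L_h$ — which must be kept aligned through the restriction of $\xi$; it is precisely this alignment that renders the primary factorization of $\bar S^{(i)}$ obvious and lets part (1) be re-used for it.
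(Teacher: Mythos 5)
Your proof is correct. For part (1) you do exactly what the paper leaves as ``clear'': match coefficients through $\pi_i$ (your generator-based variant is also fine, since all four maps in your square are $\Z_m$-algebra homomorphisms and $R_m$ is generated by $X^{\pm1}$ over $\Z_m$). For part (2), however, your route genuinely differs from the paper's. The paper verifies the defining identity of Theorem~\ref{fpscor}(1) directly: it computes $B_i(X)P_c(X)$ coordinatewise under the identification $R_m\cong R_{m_1}\times\cdots\times R_{m_h}$ and recognizes the result as $P_{\bar S^{(i)}(c)}(X)$, whence $B_i(X)=A_{\bar S^{(i)}}(X)$. You instead determine the primary factorization of $\bar S^{(i)}$ itself, namely $(\bar S^{(i)})^{(i)}=S^{(i)}$ and $(\bar S^{(i)})^{(k)}=id_{L_k}$ for $k\neq i$, apply part (1) to $\bar S^{(i)}$ in each coordinate, and conclude from the joint injectivity of the projections $\varpi_k$ (equivalently, the injectivity of $j$). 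Both arguments rest on the same identification of $L$ with $L_1\times\cdots\times L_h$; yours has the advantage of reusing (1) and of explicitly checking that $\bar S^{(i)}$ lies in $\mathrm{CA}_f(\Z_m)$ via Theorem~\ref{finitaryCH} (a point the paper's proof takes for granted, though it is needed for $\varrho(\bar S^{(i)})$ to make sense), while the paper's direct computation is self-contained and does not require identifying the primary components of $\bar S^{(i)}$. The one care point, which you correctly flag, is keeping the two roles of $L_i$ (primary summand of $\Z_m^{(\Z)}$ versus direct factor of the product) aligned through $\xi$.
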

\begin{proof}
(1) is clear.

(2)  Due to the above identification, $\bar S^{(i)}= S^{(i)}\times \prod_{j\neq i, j=1}^{h}id_{L_j}$ for every $i\in\{1,\ldots,h\}$. Let $c\in L$. So, $c=(c_1,\ldots,c_h)$ by the identification. This corresponds to $P_c(X)=(P_{c_1}(X),\ldots,P_{c_h}(X))$ and, for $A(X)\in R_m$, $A(X)P_c(X)= \big(A_1(X)P_{c_1}(X), \ldots, A_h(X)P_{c_h}(X) \big)$.
Then, for every $i\in\{1,\ldots,h\}$,
$$B_i(X)P_c(X) = \big(P_{c_1}(X), \ldots, P_{c_{i-1}}, A_i(X)P_{c_i}(X),P_{c_{i+1}}(X),\ldots, P_{c_h}(X) \big) = P_{\bar S^{(i)}(c)}(X)=A_{\bar S^{(i)}}(X)P_c(X).$$
Since this is true for every $c\in\Z_m^\Z$, we get that $B_i(X)=A_{\bar S^{(i)}}(X)$.
\end{proof}

\section{The dual of a linear cellular automaton is a finitary linear cellular automaton}\label{like:a:scalar}

In the sequel we consider an alternative description of the one-dimensional linear cellular automata on $\Z_m$. 
To this end, we introduce a set of sharper notations as follows. Let $K:=\Z_m^\Z$ denote the set of configurations, which is a totally disconnected compact abelian group, and let $L:=\Z_m^{(\Z)}$ denote the set of finite configurations, which we consider as a discrete abelian group. So, as usual, we make the identifications $L=\widehat K$ and $K=\widehat L$.

For $l \leq r$ in $\Z$, let
\begin{equation}\label{Pdef}
P_{l,r}:= \prod_{i=l}^r(\Z_m)_i \leq L,
\end{equation}
and let $\pi_{l,r}\colon K \to P_{l,r}$ be the canonical projection; for $n\in\Z$, we simply write $\pi_n :=\pi_{n,n}$. 

In this notation, let $T=T_{f[l,r]\lambda}\colon\Z_m^\Z\to \Z_m^\Z$ be a linear cellular automaton. First of all, it is convenient to extend the function $f\colon P_{l,r}\to\Z_m$ to $K$ by simply setting $\bar f := f \circ \pi_{l,r}\colon K\to\Z_m$. Then \eqref{lCA} becomes, for every $n\in\Z$,
\begin{equation*}
\pi_{n}(T(x)) =  (T(x))_n=f(x_{n+l},\ldots,x_{n+r}) = (f \circ \pi_{n+l,n+r}) (x) = (f \circ \pi_{l,r} \circ \sigma^n)(x) = \bar f(\sigma^n(x))\in \Z_m.
\end{equation*}
The right-hand side of this chain of equations suggests the following alternative description of $T$. 

\smallskip
In Pontryagin duality theory it is standard to consider the bilinear evaluation map 
\begin{equation}\label{speq}
\langle -\mid-\rangle\colon L\times K\to \Z_m\leq\mathbb{T}
\end{equation}
defined by $$\langle\lambda\mid x\rangle = \lambda(x)=\sum_{i\in \Z} \lambda_ix_i,$$ for $\lambda \in L=\widehat K$ and $x\in K$.
The function defined in \eqref{speq} is $\sigma$-invariant (hence, also $\beta$-invariant), in the obvious sense: for every $(a,x)\in L\times K$, $$
\langle \sigma(a)\mid \sigma (x)\rangle=\langle a\mid x\rangle.$$

Using the bilinear evaluation map in \eqref{speq} we show now that $T_\lambda$ coincides with the function $K\to K$ defined in \eqref{Eq:June10}.

\begin{proposition}\label{TTSS} 
Let $T_{\lambda}\colon K\to K$ be a linear cellular automaton and define $E_\lambda\colon K\to K$ by 
\begin{equation}\label{Eq:June10}
E_\lambda(x):=(\langle \sigma^n(\lambda)\mid x\rangle)_{n\in\Z} \ \mbox{ for every } \ x=(x_n)_{n\in\Z}\in K.
\end{equation}
Then $T_{\lambda}=E_\lambda$, and so $S_{\lambda}=E_\lambda\restriction_L$.
\end{proposition}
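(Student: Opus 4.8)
The statement to prove is Proposition~\ref{TTSS}: that the linear cellular automaton $T_\lambda\colon K\to K$ coincides with the map $E_\lambda$ defined by $E_\lambda(x)=(\langle\sigma^n(\lambda)\mid x\rangle)_{n\in\Z}$. The plan is to verify that the two maps agree coordinatewise, i.e.\ that $\pi_n(T_\lambda(x))=\langle\sigma^n(\lambda)\mid x\rangle$ for every $x\in K$ and every $n\in\Z$, and then observe that the claim about $S_\lambda$ follows immediately since $S_\lambda$ is by definition the restriction of $T_\lambda$ to $L=\Z_m^{(\Z)}$.

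First I would unwind the right-hand side using the definition of the evaluation pairing \eqref{speq}. Writing $\lambda=(\lambda_i)_{i\in\Z}$, the shift $\sigma^n(\lambda)$ has $i$-th coordinate $\lambda_{i-n}$ (with the convention for $\sigma$ fixed in Example~\ref{Exa1}), so
$$\langle\sigma^n(\lambda)\mid x\rangle=\sum_{i\in\Z}\lambda_{i-n}x_i=\sum_{j\in\Z}\lambda_j x_{j+n},$$
after the reindexing $j=i-n$. Since $\supp(\lambda)\subseteq M=\{l,\ldots,r\}$, this sum is finite and equals $\sum_{j=l}^r\lambda_j x_{n+j}=f(x_{n+l},\ldots,x_{n+r})$ by the form \eqref{locRule} of the local rule. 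On the other hand, by \eqref{lCA} this last expression is exactly $(T_\lambda(x))_n=\pi_n(T_\lambda(x))$. Comparing, we get $\pi_n(E_\lambda(x))=\pi_n(T_\lambda(x))$ for all $n$, hence $E_\lambda(x)=T_\lambda(x)$ for all $x\in K$, i.e.\ $T_\lambda=E_\lambda$. The final assertion $S_\lambda=E_\lambda\restriction_L$ is then immediate from Definition~\ref{Def:June6}.

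This argument is essentially a bookkeeping verification, so I do not expect a genuine obstacle; the only point requiring care is keeping the sign conventions straight — which direction $\sigma$ shifts coordinates, and correspondingly whether $\sigma^n(\lambda)$ has $i$-th entry $\lambda_{i-n}$ or $\lambda_{i+n}$ — and checking this is consistent with the chain of equalities displayed just before the proposition, namely $\pi_n(T(x))=\bar f(\sigma^n(x))=f(\pi_{l,r}(\sigma^n(x)))$. One should also note that the finiteness of the support of $\lambda$ is what makes the infinite sum $\sum_i\lambda_{i-n}x_i$ well-defined (a genuine finite sum) for an arbitrary $x\in K$, so that $E_\lambda$ is indeed a well-defined map $K\to K$; this is worth a half-sentence remark. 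Once the coordinate identity is established, no further work is needed.
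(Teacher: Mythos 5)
Your proof is correct and follows essentially the same route as the paper's: both compute $\langle\sigma^n(\lambda)\mid x\rangle=\sum_{i\in\Z}\lambda_{i-n}x_i$ and then identify this with $(T_\lambda(x))_n$ via \eqref{lCA} and \eqref{locRule}, the only difference being that you make the reindexing $j=i-n$ and the finiteness remark explicit where the paper leaves them implicit. Your sign convention for $\sigma$ ($\sigma^n(\lambda)_i=\lambda_{i-n}$) agrees with Example~\ref{Exa1}, so there is nothing to correct.
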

\begin{proof}
For every $x\in K$ and $n\in\Z$, we have 
\begin{equation*}%\label{DefT}
E_\lambda(x)=(\langle \sigma^n(\lambda)\mid x\rangle)_{n\in\Z}
=(\langle (\lambda_{i-n})_{n\in \Z} \mid x\rangle)_{n\in\Z}
=\left(\sum_{i\in\Z} \lambda_{i-n}x_i\right)_{n\in\Z}.
\end{equation*}
So, in view of \eqref{lCA} and \eqref{locRule},  we conclude that $T_{\lambda}=E_\lambda$, and then also that $S_{\lambda}=E_\lambda\restriction_L$.
\end{proof}

Proposition~\ref{TTSS} gives another motivation for the brief notation $T_\lambda$ and $S_\lambda$ that we introduced in Remark~\ref{remark:june:8}(1).

\smallskip
In the next section we will use the following useful computation.

\begin{example}\label{image(e_1)}
Fixed $\lambda\in L$, Proposition~\ref{TTSS} yields that, for every $k\in \Z$, %one has
\begin{equation}\label{Eqe_1}
T_\lambda(e_k) = \left(\sum_{i\in\Z} \lambda_{i-n} (e_k)_i\right)_{n\in\Z} = ( \lambda_{k-n} )_{n\in\Z} 
= \sum_{n\in\Z} \lambda_{k-n} e_n = \sum_{n = k-r}^{k-l} \lambda_{k-n} e_n \in P_{k-r,k-l}. % = \sum_{i = l}^{r} \lambda_{i} e_{k-i},
\end{equation} 
By induction, one can deduce that, for every $n\in\N$, $T_{\lambda}^n (e_k) \in P_{k-nr,k-nl}.$
\end{example}

Now we are in a position to prove Theorem~\ref{dualCAintro}.

\begin{proof}[\bf Proof of Theorem~\ref{dualCAintro}] 
Fix $\lambda\in L$ and consider the linear cellular automaton $T_\lambda\colon K\to K$, which is a continuous group endomorphism of the compact abelian group $K$. We prove that its dual endomorphism $\widehat {T_\lambda}\colon L\to L$ coincides with $S_{\lambda^\wedge}$ and that $\widehat {S_\lambda}=T_{\lambda^\wedge}$.  

Let $x\in K$ and $a\in L$. Then $a\in L$ is the character of $K$ such that, for every $x\in X$,
\[
a(x) = \langle a \mid x \rangle = \sum_{i\in \Z} a_i x_i.
\]
For every $n \in \Z$, Proposition~\ref{TTSS} gives $( T_\lambda(x) )_n = \sum_{i\in\Z} \lambda_{i-n} x_i$, while $\widehat {T_\lambda}(a)=a \circ T_\lambda$ by definition, so
\begin{equation*}
(\widehat {T_\lambda}(a))(x) = a ( T_\lambda(x) ) = \langle a \mid T_\lambda(x) \rangle=\sum_{n\in\Z}a_n\left(\sum_{i\in\Z} \lambda_{i-n}x_i\right).
\end{equation*}
As both $a \in L$ and $\lambda \in L$, the above sum is finite, so we can rearrange it as follows to obtain
\[(\widehat {T_\lambda}(a))(x) = \sum_{i\in\Z} \left( \sum_{n\in\Z} a_n\lambda_{i-n} \right) x_{i} 
= \left\langle \left( \sum_{n\in\Z} a_n\lambda_{i-n} \right)_{i\in \Z} \mid x \right\rangle. \]
Moreover, for every $a\in L$,
\begin{equation*}
S_{\lambda^\wedge}(a)=(\langle \sigma^i(\lambda^\wedge)\mid a\rangle)_{i\in\Z}=\left(\sum_{n\in\Z} a_n\lambda^\wedge_{n-i}\right)_{i\in\Z}=\left(\sum_{n\in\Z} a_n\lambda_{i-n}\right)_{i\in\Z}.
\end{equation*}
Then $(\widehat {T_\lambda}(a))(x) = \langle S_{\lambda^\wedge}(a) \mid x \rangle = S_{\lambda^\wedge}(a)(x)$.

\smallskip
The equality $\widehat {S_\lambda}=T_{\lambda^\wedge}$ is obtained from $\widehat {T_{\lambda^\wedge}} = S_{\lambda^{\wedge\wedge}}$, as 
$\lambda^{\wedge\wedge}=\lambda$ and $\widehat{\widehat{T_{\lambda^\wedge}}}=T_{\lambda^\wedge}$, by Pontryagin duality.
\end{proof}

The next observation is exploited in the proofs of Corollaries~\ref{injsurjsurjinj} and~\ref{htopFClCA}.

\begin{lemma}\label{wedgeconj}
Fixed $\lambda\in L$, the two finitary linear cellular automata $S_{\lambda^\wedge}$ and $S_{\lambda}$ on $\Z_m$ are conjugated by the group automorphism ${}^\wedge \colon L \to L$, $a\mapsto a^\wedge$, that is, the following diagram commutes.
\[\xymatrix{L \ar[r]^{S_\lambda}\ar[d]_{{}^\wedge} & L \ar[d]^{{}^\wedge}\\
L \ar[r]_{S_{\lambda^\wedge}}&  L}\]
\end{lemma}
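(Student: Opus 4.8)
The plan is to verify directly that for every $a \in L$ one has $S_{\lambda^\wedge}(a^\wedge) = (S_\lambda(a))^\wedge$, which is exactly the commutativity of the square. The natural tool is the formal power series representation: by Remark~\ref{easy:remark}, the operation $a \mapsto a^\wedge$ corresponds on power series to $P_a(X) \mapsto P_a(X^{-1})$, i.e. substituting $X^{-1}$ for $X$. First I would record that, by Theorem~\ref{fpscor}(1) applied to $S_\lambda$ (or rather to the ambient $T_\lambda$, then restricted), the action of $S_\lambda$ on a finite configuration $a$ is multiplication of power series: $P_{S_\lambda(a)}(X) = A_{S_\lambda}(X)\,P_a(X)$, where $A_{S_\lambda}(X) = P_{\lambda^\wedge}(X) = P_\lambda(X^{-1})$ by Remark~\ref{suppAsuppT}(1). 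Likewise $P_{S_{\lambda^\wedge}(a^\wedge)}(X) = A_{S_{\lambda^\wedge}}(X)\, P_{a^\wedge}(X) = P_{\lambda^{\wedge\wedge}}(X)\, P_{a^\wedge}(X) = P_\lambda(X)\, P_a(X^{-1})$.

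With these two identities in hand, the computation is immediate:
\[
P_{(S_\lambda(a))^\wedge}(X) = P_{S_\lambda(a)}(X^{-1}) = P_\lambda(X)\, P_a(X^{-1}) = P_{S_{\lambda^\wedge}(a^\wedge)}(X),
\]
where the first equality is Remark~\ref{easy:remark} and the last is the identity just derived. Since the power series representation $a \mapsto P_a(X)$ is injective, this yields $(S_\lambda(a))^\wedge = S_{\lambda^\wedge}(a^\wedge)$ for all $a \in L$, i.e. ${}^\wedge \circ S_\lambda = S_{\lambda^\wedge} \circ {}^\wedge$. It only remains to note that ${}^\wedge\colon L \to L$ is indeed a group automorphism — it is clearly additive (coordinatewise negation of indices preserves sums), it is a bijection since $\lambda^{\wedge\wedge} = \lambda$, and it preserves the finiteness of the support — so it is a legitimate conjugacy, and the two finitary linear cellular automata $S_\lambda$ and $S_{\lambda^\wedge}$ are conjugate via it.

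Alternatively, and perhaps even more cleanly, one can avoid power series entirely and argue via Pontryagin duality: by Theorem~\ref{dualCAintro}, $\widehat{T_\lambda} = S_{\lambda^\wedge}$ and $\widehat{S_\lambda} = T_{\lambda^\wedge}$; the map ${}^\wedge$ on $L$ is, up to the identifications $L = \widehat K$ and $K = \widehat L$, the dual of the shift-type reflection on $K$, and the $\sigma$-invariance of the pairing $\langle - \mid - \rangle$ noted after \eqref{speq} translates into the desired intertwining relation. However, the power series computation above is shorter and self-contained, so that is the route I would take. I do not expect any genuine obstacle here; the only point requiring a moment's care is bookkeeping the direction of the index reversal — making sure that $A_{S_\lambda}(X) = P_\lambda(X^{-1})$ (not $P_\lambda(X)$), which is precisely the content of Remark~\ref{suppAsuppT}(1) and is the reason the statement involves $\lambda^\wedge$ on the bottom row rather than $\lambda$ itself.
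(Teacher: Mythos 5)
Your proof is correct, but it takes a different route from the paper's. The paper proves the lemma by a bare-hands coordinate computation: using the formula $S_\lambda(a)_n=\sum_i \lambda_{i-n}a_i$ from Proposition~\ref{TTSS}, it writes out $S_\lambda(a)^\wedge=(S_\lambda(a)_{-n})_{n\in\Z}=\bigl(\sum_i\lambda_{i+n}a_i\bigr)_{n\in\Z}$ and re-indexes ($i\mapsto -i$, then $j=-i$) to land on $S_{\lambda^\wedge}(a^\wedge)$ directly. You instead push everything through the Laurent-polynomial representation: $P_{S_\lambda(a)}(X)=P_\lambda(X^{-1})P_a(X)$ from Theorem~\ref{fpscor}(1) and Remark~\ref{suppAsuppT}(1), then apply the substitution $X\mapsto X^{-1}$ and compare with $A_{S_{\lambda^\wedge}}(X)P_{a^\wedge}(X)$. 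This is sound — since $a\in L$ everything lives in $R_m=\Z_m[X,X^{-1}]$, where $X\mapsto X^{-1}$ is a ring automorphism, which is the one (implicit) step your chain of equalities relies on to pass the substitution through the product; it would be worth stating that explicitly. What your approach buys is conceptual transparency: the conjugacy is just the statement that $\varrho$ intertwines ${}^\wedge$ on configurations with the involution $X\mapsto X^{-1}$ on $R_m$. What the paper's approach buys is self-containedness at the cost of an index shuffle — it needs only Proposition~\ref{TTSS}, not the multiplicativity of $\varrho$. Your closing remarks (that ${}^\wedge$ is an additive involution preserving finite supports, hence a legitimate conjugating automorphism) are correct and fill in a point the paper leaves tacit.
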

\begin{proof}
Let us show that $S_{\lambda^\wedge} (a^\wedge) = S_{\lambda}(a)^\wedge$ for every $a\in L$:
\[S_{\lambda}(a)^\wedge = ( S_{\lambda}(a)_{-n} )_{n\in \Z} = \left( \sum_{i\in\Z} \lambda_{i+n} a_i \right)_{n\in\Z} 
= \left( \sum_{i\in\Z} \lambda_{-i-n}^\wedge a_{-i}^\wedge \right)_{n\in\Z} 
= \left( \sum_{j\in\Z}\lambda_{j-n}^\wedge a_{j}^\wedge \right)_{n\in\Z} = S_{\lambda^\wedge}(a^\wedge).\qedhere\]
\end{proof}

As a consequence of Theorem~\ref{dualCAintro} and the properties of Pontryagin duality, we obtain the following: 

\begin{corollary}\label{injsurjsurjinj}
Let $T_{\lambda}\colon K\to K$ be a linear cellular automaton. Then $T_{\lambda}$ is injective (respectively, surjective) if and only if  $S_{\lambda}$ is surjective (respectively, injective).
\end{corollary}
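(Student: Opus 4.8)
The plan is to derive Corollary~\ref{injsurjsurjinj} directly from Theorem~\ref{dualCAintro} together with the standard duality dictionary between injectivity and surjectivity. Recall that for a continuous homomorphism $\phi\colon G\to H$ of locally compact abelian groups one has: $\phi$ is injective if and only if $\widehat\phi$ has dense image, and $\phi$ has dense image if and only if $\widehat\phi$ is injective; moreover $\phi$ is surjective if and only if $\widehat\phi$ is injective and $\widehat\phi$ is a topological embedding (this last refinement is where one must be slightly careful, see below). Since everything here takes place between $K=\Z_m^\Z$ (compact) and $L=\Z_m^{(\Z)}$ (discrete), these subtleties essentially disappear, and that is the point I would stress.

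Concretely, first I would invoke Theorem~\ref{dualCAintro} to write $\widehat{S_\lambda}=T_{\lambda^\wedge}$ and $\widehat{T_\lambda}=S_{\lambda^\wedge}$. Then I would argue as follows. Suppose $T_\lambda$ is injective. Its dual $\widehat{T_\lambda}=S_{\lambda^\wedge}\colon L\to L$ then has dense image in $L$; but $L$ is discrete, so dense image means $S_{\lambda^\wedge}$ is surjective. By Lemma~\ref{wedgeconj}, $S_{\lambda^\wedge}$ is conjugate to $S_\lambda$, hence $S_\lambda$ is surjective as well. Conversely, if $S_\lambda$ is surjective, then so is $S_{\lambda^\wedge}$ (again by Lemma~\ref{wedgeconj}), and dualizing a surjective homomorphism of discrete/compact abelian groups yields an injective homomorphism; since $\widehat{S_{\lambda^\wedge}}=T_{\lambda^{\wedge\wedge}}=T_\lambda$, we get that $T_\lambda$ is injective. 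The same two-way argument with the roles of ``injective'' and ``surjective'' interchanged—using that $\phi$ surjective $\iff$ $\widehat\phi$ injective in the compact-to-discrete (or discrete-to-compact) setting—gives: $T_\lambda$ surjective $\iff$ $S_\lambda$ injective.

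The main obstacle, and the only place demanding genuine care, is the precise form of the duality statement ``$\phi$ surjective $\iff$ $\widehat\phi$ injective''. In full generality for locally compact abelian groups, surjectivity of $\phi$ corresponds to $\widehat\phi$ being a (closed) topological embedding, not merely injective. However, in our situation one direction has $\phi\colon K\to K$ with $K$ compact: a continuous injective endomorphism of a compact group is automatically a topological embedding, and a continuous surjective homomorphism from a compact group is automatically open; dually, for $\phi\colon L\to L$ with $L$ discrete, injectivity and being an embedding coincide trivially, and every homomorphism is continuous and open onto its image. So in each case the naive equivalences hold, and I would cite~\cite{ADGB} for the relevant facts (closed range / open mapping for compact groups, and the annihilator correspondence). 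A clean way to organize this is to phrase it once: for $\phi$ an endomorphism of a compact abelian group $K$ (equivalently $\widehat\phi$ an endomorphism of the discrete group $\widehat K$), $\phi$ is injective $\iff$ $\widehat\phi$ is surjective, and $\phi$ is surjective $\iff$ $\widehat\phi$ is injective. Applying this to $\phi=T_\lambda$ on $K$ and reading off $\widehat{T_\lambda}=S_{\lambda^\wedge}\simeq S_\lambda$ finishes both equivalences at once.

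Thus the whole proof is short: it is two applications of the compact/discrete duality equivalences, glued by Theorem~\ref{dualCAintro} to identify the duals and by Lemma~\ref{wedgeconj} to replace $S_{\lambda^\wedge}$ by the conjugate $S_\lambda$. I would write it in roughly four or five lines, making explicit which duality fact is used at each step and pointing to~\cite{ADGB} for them, and noting that this recovers the result of~\cite{ION,Rich} with a conceptually cleaner argument.
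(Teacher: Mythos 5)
Your proposal is correct and follows essentially the same route as the paper: apply Theorem~\ref{dualCAintro} to identify $\widehat{T_\lambda}=S_{\lambda^\wedge}$, use the standard Pontryagin-duality equivalence between injectivity and surjectivity in the compact/discrete setting, and then pass from $S_{\lambda^\wedge}$ to $S_\lambda$ via the conjugation of Lemma~\ref{wedgeconj}. The only difference is that you spell out the justification of the duality dictionary (dense image in a discrete group, closedness of ranges), which the paper compresses into ``the standard properties of Pontryagin duality.''
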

\begin{proof} 
Theorem~\ref{dualCAintro} yields that $\widehat{T_\lambda}= S_{\lambda^\wedge}$. 
The standard properties of Pontryagin duality imply that $T_\lambda$  is injective (respectively, surjective) if and only if $\widehat{T_\lambda} = S_{\lambda^\wedge}$ is surjective (respectively, injective). By Lemma~\ref{wedgeconj}, $S_{\lambda^\wedge}$ and $S_{\lambda}$ are conjugated, and this implies that $S_{\lambda^\wedge}$ is surjective (respectively, injective) if and only if $S_{\lambda}$ surjective (respectively, injective). 
\end{proof}

Richardson~\cite[Theorem 3]{Rich} (in the non-necessarily linear case) proved that, for any $\lambda\in L$, $T_\lambda$ is surjective if and only if $S_\lambda$ is injective, while the injectivity of $T_\lambda$ implies the surjectivity of $S_\lambda$, but the converse implication need not hold in general. In the linear case the equivalence between the injectivity of $T_\lambda$ and the surjectivity of $S_\lambda$ was obtained in~\cite[Theorem 2]{ION}.  Their proofs are completely different from ours and they do not use Pontryagin duality, so our present proof represents a new (unifying) approach which allows for shorter and simpler proofs.

\section{Injective, surjective and invertible finitary linear cellular automata}\label{sec:InjSurj}

In this section we characterize surjective one-dimensional finitary linear cellular automata as those that are invertible with inverse a one-dimensional finitary linear cellular automaton. We give separately the case of prime power since under this assumption one obtains some extra information that is not available in general (see Theorem~\ref{injbij} and Example~\ref{ex*}).

\begin{proposition}\label{previous}
If $m=p^k$ with $p$ prime and $k\in\N_+$, and $S\colon \Z_m^{(\Z)} \to \Z_m^{(\Z)}$ is a finitary linear cellular automaton, then the following are equivalent: 
\begin{itemize}
   \item[(1)] $S$ is surjective; 
   \item[(2)] $S$ is bijective (i.e., $S$ is a group automorphism);
   \item[(3)] $S$ is bijective and its inverse is again a linear cellular automaton; 
   \item[(4)] $|\supp^*(S)|=1$; 
   \item[(5)] $S^{ p^{k-1}(p-1) }$ is a power of $\sigma$;
   \item[(6)] $S^{ p^{k-1}(p-1) } = \sigma^{ \pm \deg(S) p^{k-1} (p-1) }$. 
\end{itemize}
\end{proposition}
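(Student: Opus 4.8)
The plan is to translate everything into the Laurent polynomial ring $R_{p^k} = \Z_{p^k}[X,X^{-1}]$ via the isomorphism $\varrho\colon \mathrm{CA}_f(\Z_{p^k})\to R_{p^k}$ from Theorem~\ref{fpscor}(2), so that $S = S_\lambda$ corresponds to $A(X) := A_S(X)$, composition to multiplication, and $\deg(S)$ to $\deg^*(A(X))$. The key algebraic fact is that the group of units of $R_{p^k}$ is exactly the set of monomials $u X^n$ with $u\in\Z_{p^k}^\times$ and $n\in\Z$: indeed, a unit must map under the reduction $R_{p^k}\to R_p = \F_p[X,X^{-1}]$ to a unit of $\F_p[X,X^{-1}]$, which (since $\F_p[X,X^{-1}]$ is a UFD with units $\F_p^\times X^{\Z}$) forces the image to be a monomial; then a standard lifting/Hensel-type argument along the chain $R_{p^k}\twoheadrightarrow R_{p^{k-1}}\twoheadrightarrow\cdots\twoheadrightarrow R_p$ shows the unit itself is a monomial. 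I would state and prove this as the first step; it is the technical heart.

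With that in hand I would run the cycle of implications. $(1)\Leftrightarrow(2)$: surjectivity of $S_\lambda$ is equivalent to injectivity of $T_\lambda$ by Corollary~\ref{injsurjsurjinj}, and $\Z$ is surjunctive (or invoke Theorem~\ref{thm:goe} directly, or the Garden of Eden argument), so $T_\lambda$ injective $\Rightarrow$ $T_\lambda$ bijective $\Rightarrow$ $S_\lambda$ bijective; conversely bijective trivially implies surjective. Actually it is cleaner to prove $(1)\Leftrightarrow(4)\Leftrightarrow(3)$ purely algebraically: $S$ surjective means $A(X)$ is surjective as a multiplication operator on the $R_{p^k}$-module $R_{p^k}$ itself, i.e. $A(X)\in R_{p^k}^\times$, i.e. (by step one) $A(X) = uX^{-n}$ is a monomial with unit coefficient, which happens precisely when $|\supp^*(A(X))| = 1$ — note $\supp^*$ picks out coefficients coprime to $p^k$, i.e. units — giving $(4)$; and a monomial $uX^{-n}$ has inverse $u^{-1}X^{n}\in R_{p^k}$, which corresponds to a finitary linear cellular automaton, giving $(3)$; and $(3)\Rightarrow(2)\Rightarrow(1)$ is immediate. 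Here one must be slightly careful: $S$ surjective as a map on finite configurations must be shown equivalent to $A(X)$ being a unit in $R_{p^k}$ — this uses that $R_{p^k}$ is the whole group $\Z_{p^k}^{(\Z)}$ and $S$ acts as multiplication by $A(X)$ (Theorem~\ref{fpscor}(1), Remark~\ref{New:Remark}(1)), so surjectivity of the operator is exactly $A(X)R_{p^k} = R_{p^k}$, i.e. $1\in A(X)R_{p^k}$, i.e. $A(X)$ is a unit.

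For the last two items, assume $(4)$, so $A(X) = uX^{-n}$ with $u\in\Z_{p^k}^\times$ and $|n| = \deg(S)$ (the sign of $n$ recording whether $\supp^*$ sits in the positive or negative part). Since $\Z_{p^k}^\times$ has order $p^{k-1}(p-1)$, we get $u^{p^{k-1}(p-1)} = 1$, hence $A(X)^{p^{k-1}(p-1)} = X^{-n\,p^{k-1}(p-1)}$, which corresponds to $\sigma^{\pm\deg(S)\,p^{k-1}(p-1)}$ — recall from Remark~\ref{New:Remark}(3) that $\sigma$ corresponds to $X^{-1}$ — giving $(6)$, and $(6)\Rightarrow(5)$ trivially. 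Finally $(5)\Rightarrow(1)$: if $S^{p^{k-1}(p-1)} = \sigma^j$ for some $j$, then $S^{p^{k-1}(p-1)}$ is bijective (the shift is an automorphism of $\Z_{p^k}^{(\Z)}$), so in particular $S$ is surjective. This closes the loop $(1)\Rightarrow(4)\Rightarrow(6)\Rightarrow(5)\Rightarrow(1)$ together with $(4)\Rightarrow(3)\Rightarrow(2)\Rightarrow(1)$.

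The main obstacle I anticipate is the first step — pinning down that $R_{p^k}^\times$ consists exactly of unit-coefficient monomials. Over a field this is standard, but over $\Z_{p^k}$ one needs the lifting argument (or equivalently, observing that a unit reduces mod $p$ to a monomial $\bar u X^n$ and then $A(X)X^{-n}$ is a unit congruent to a unit of $\Z_{p^k}$ mod $p$, so it lies in $1 + pR_{p^k}$ up to a constant, and such elements are units with polynomial-type inverses only if they are themselves constants — this last point requires knowing $1+pR_{p^k}$ meets the monomials only in $1+p\Z_{p^k}$, which follows by comparing supports after multiplying by the purported inverse). Everything else is bookkeeping with $\varrho$, $\supp^*$, $\deg^*$, and the order of $\Z_{p^k}^\times$.
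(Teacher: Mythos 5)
Your plan founders exactly at the step you single out as ``the technical heart'': the claim that $R_{p^k}^\times$ consists precisely of the monomials $uX^n$ with $u\in\Z_{p^k}^\times$ is false for $k\geq 2$. For instance, $1+pX$ is a unit of $\Z_{p^k}[X,X^{-1}]$ with inverse $\sum_{j=0}^{k-1}(-p)^jX^j$ (for $k=2$: $(1+pX)(1-pX)=1-p^2X^2=1$), yet it is not a monomial. Your lifting argument only shows that a unit of $R_{p^k}$ reduces mod $p$ to a monomial of $\F_p[X,X^{-1}]$, i.e.\ that exactly one coefficient is coprime to $p$; it cannot show the unit is itself a monomial, and your closing justification fails because \emph{every} element of $1+pR_{p^k}$ is a unit (its non-constant coefficients are nilpotent), not only the constants. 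The correct criterion --- $A(X)\in R_{p^k}^\times$ if and only if $\abs{\supp^*(A(X))}=1$, proved by reducing modulo the nilradical $pR_{p^k}$ --- happens to be what you need for $(1)\Leftrightarrow(3)\Leftrightarrow(4)$, so that part is repairable (and your observation that surjectivity of $S$ on $R_{p^k}$ means $1\in A(X)R_{p^k}$ is a clean way to get $(1)$). But the error propagates fatally into $(4)\Rightarrow(6)$: you write ``assume $(4)$, so $A(X)=uX^{-n}$,'' which does not follow from $\abs{\supp^*(S)}=1$ (take $A(X)=1+pX$), and your computation of $A(X)^{p^{k-1}(p-1)}$ then collapses. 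The missing ingredient is precisely the paper's Lemma~\ref{aboveabove}: writing $A(X)=A^*(X)+pA_1(X)$ gives $A(X)^{p^{k-1}}=A^*(X)^{p^{k-1}}$, which reduces the exponentiation to the genuine monomial $A^*(X)=sX^n$, after which $s^{p^{k-1}(p-1)}=1$ in $\Z_{p^k}$ finishes the argument as you intend. (A minor convention slip: by \eqref{ffps} the right shift $\sigma$ corresponds to $X$, not $X^{-1}$; this is harmless here because of the $\pm$ in $(6)$.)

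For comparison, the paper avoids the unit-group characterization altogether: it proves $(1)\Rightarrow(4)$ by a direct support argument (if $\abs{\supp^*(\lambda)}>1$ then every nonzero $x$ has $\abs{\supp(S(x))}\geq 2$, so the $e_i$ are not in the image), and gets $(4)\Rightarrow(6)\,\&\,(3)$ from Lemma~\ref{aboveabove} together with the order of $\Z_{p^k}^\times$. Your unit-theoretic route is viable and arguably tidier, but only once the unit criterion is stated and proved in the correct form and Lemma~\ref{aboveabove} (or an equivalent nilpotence argument) is inserted to handle non-monomial $A(X)$ with $\abs{\supp^*(A(X))}=1$.
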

\begin{proof} The implications (3)$\Rightarrow$(2)$\Rightarrow$(1)$\Leftarrow$(5)$\Leftarrow$(6) are clear. 

For the proof of the remaining implications write $A_S(X)= A_S^*(X)+pA_1(X)$ with $A_1(X)\in R_m$; by Theorem~\ref{fpscor} and Lemma~\ref{aboveabove},
\begin{equation}\label{aboveq}
A_{S^{p^{k-1}}}(X) = A_S (X)^{p^{k-1}} = A_S^*(X)^{p^{k-1}},
\end{equation} 
and by Remark~\ref{suppAsuppT}(2),
\begin{equation}\label{aboveqq}
\supp(A_S^*(X)) = \supp^*(A_S(X)) = -\supp^*(S).
\end{equation}

(4)$\Rightarrow$(6)\&(3) %By hypothesis and \eqref{aboveqq}, $\supp(A_S^*(X))$ is a singleton, say $\{n\}$, and 
By hypothesis, let $\supp^*(S) = \{-n\} \subseteq \Z$. By definition, $\deg(S)=|n|$, and by \eqref{aboveqq}, $\supp(A_S^*(X)) = \{n\}$, so $A_S^*(X)=sX^n$ for some $s\in\Z_m$, with $(s,p)=1$. 
By \eqref{aboveq}, $A_{S^{p^{k-1}}}(X) = A_S^*(X)^{p^{k-1}} = s^{p^{k-1}}X^{n p^{k-1}}$, and so
\begin{equation}\label{correct:eq}
A_{S^{ p^{k-1}(p-1) }}(X) = s^{ p^{k-1}(p-1) }X^{n p^{k-1}(p-1)} = X^{n p^{k-1}(p-1)}
\end{equation} 
as $s^{ p^{k-1} (p-1) }=1$ in $\Z_{ p^{k} }$. This proves that $S^{ p^{k-1}(p-1) } = \sigma^{n p^{k-1}(p-1)}$, hence (6) holds.

In particular, \eqref{correct:eq} shows that $A_S (X)^{ p^{k-1}(p-1) } = X^{n p^{k-1}(p-1)}$ is invertible in $R_m$, and so also $A_S(X)$ is invertible in $R_m$. Then $S$ is invertible with $A_{S^{-1}}(X)=A_S(X)^{-1}$, that is, $S^{-1}$ is a finitary linear cellular automaton. This proves (3). 

(1)$\Rightarrow$(4) Let $S=S_{f[l,r]\lambda}$. Since $S$ is surjective if and only if $S^{p^{k-1}} $ is surjective, 
 in view of \eqref{aboveq} and \eqref{aboveqq}, we have $|\supp^*(S)|\geq 1$, and we can assume without loss of generality that $l,r\in\supp^*(S)=\supp^*(\lambda)$. 
 
It remains to prove that if $|\supp^*(S)| > 1$ (i.e., if $l <r$), then $S$ is not surjective. To this end, we show that if $x\in \Z_m^{(\Z)}\setminus\{0\}$, then $|\supp(S(x))|\geq2$; so in particular all $x\in\Z_m^{(\Z)}$ with $|\supp(x)|=1$ are not in the image of $S$.
In fact, fix $x\in \Z_m^{(\Z)}\setminus\{0\}$, and let $n=\min\supp(x)$ and $N=\max\supp(x)$.
Then $(S(x))_{N-l}=\lambda_lx_N$ and $(S(x))_{n-r}=\lambda_rx_n$. So $(\lambda_l,p)=1=(\lambda_r,p)$ entails $N-l,n-r\in\supp(S(x))$. Clearly, $n\leq N$ and $l<r$ imply that $n-r<N-l$, and so $|\supp(S(x))|\geq 2$.
\end{proof}

Concerning the following result, Ito, Osato and Nasu~\cite{ION} give credit to Amoroso and Cooper~\cite{AC} for the proof of the fact that $S\in\mathrm{CA}_f(\Z_m)$ surjective implies injective.

\begin{theorem}\label{injbij}
Let $S\colon \Z_m^{(\Z)}\to \Z_m^{(\Z)}$ be a finitary linear cellular automaton and $S=S^{(1)}\times\cdots\times S^{(h)}$ the primary factorization of $S$. 
Then the following conditions are equivalent: 
\begin{itemize}
   \item[(1)] $S$ is surjective; 
   \item[(2)] $S$ is bijective (i.e., $S$ is a group automorphism); 
   \item[(3)] $S$ is bijective and its inverse is again a finitary linear cellular automaton; 
   \item[(4)] all primary components $S^{(j)}$ satisfy $|\supp^* (S^{(j)})| =1$.
\end{itemize}
\end{theorem}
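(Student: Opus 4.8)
The plan is to reduce Theorem~\ref{injbij} to the prime power case already settled in Proposition~\ref{previous}, using the primary factorization $S=S^{(1)}\times\cdots\times S^{(h)}$. First I would observe that each primary component $S^{(i)}\colon L_i\to L_i$ is a finitary linear cellular automaton over $\Z_{m_i}$ with $m_i=p_i^{k_i}$ a prime power, so Proposition~\ref{previous} applies to each of them. The key point is that all of the four properties in the statement are ``coordinatewise'' with respect to a finite direct product: a product map $\prod_i S^{(i)}$ is surjective (respectively, bijective) if and only if each factor $S^{(i)}$ is surjective (respectively, bijective); and the inverse of a product of bijections is the product of the inverses. The only slightly less formal observation is that if each $(S^{(i)})^{-1}$ is a finitary linear cellular automaton on $\Z_{m_i}$, then $\prod_i (S^{(i)})^{-1}$, transported back through the conjugating isomorphism $\xi\colon\Z_m^{(\Z)}\to L_1\times\cdots\times L_h$, is again a finitary linear cellular automaton on $\Z_m$; this follows from Theorem~\ref{finitaryCH} (continuity and commutation with $\sigma$ are preserved under the product and the conjugation by $\xi$), or alternatively from the ring isomorphism $\varrho$ of Theorem~\ref{fpscor}(2) together with the identification of $R_m$ with $R_{m_1}\times\cdots\times R_{m_h}$ via $j$ in~\eqref{jeq}.

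Concretely, I would argue the cycle of implications as follows. The implications (3)$\Rightarrow$(2)$\Rightarrow$(1) are trivial. For (1)$\Rightarrow$(4): if $S$ is surjective, then since $S$ is conjugated to $S^{(1)}\times\cdots\times S^{(h)}$, each $S^{(i)}$ is surjective; applying the implication (1)$\Rightarrow$(4) of Proposition~\ref{previous} to the prime power $m_i$ gives $|\supp^*(S^{(i)})|=1$ for every $i$. For (4)$\Rightarrow$(3): if $|\supp^*(S^{(i)})|=1$ for every $i$, then by (4)$\Rightarrow$(3) of Proposition~\ref{previous} each $S^{(i)}$ is bijective with inverse a finitary linear cellular automaton on $\Z_{m_i}$; hence $S^{(1)}\times\cdots\times S^{(h)}$ is bijective and, by the discussion preceding this theorem (identifying $\Z_m^{(\Z)}$ with $L_1\times\cdots\times L_h$ and using that this identification respects the shift and continuity), so is $S$, and its inverse corresponds under $\varrho$ to $(A_{S^{(1)}}(X)^{-1},\ldots,A_{S^{(h)}}(X)^{-1})\in R_{m_1}\times\cdots\times R_{m_h}\cong R_m$, which is again a finite Laurent polynomial, so $S^{-1}\in\mathrm{CA}_f(\Z_m)$. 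This closes the loop.

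The main obstacle, such as it is, is purely bookkeeping: one must be careful that the several identifications in play—the isomorphism $\iota\colon\Z_m\to\Z_{m_1}\times\cdots\times\Z_{m_h}$, the induced $\xi$ on finite configurations, and the ring isomorphism $j\colon R_m\to P$ from~\eqref{jeq}—are mutually compatible, i.e.\ that the primary factorization of $S$ at the level of cellular automata matches the decomposition of $A_S(X)$ into its components $A_i(X)$ as in~\eqref{Ai}. This compatibility is exactly what was recorded in the proposition preceding this theorem (that $\varrho_i(S^{(i)})=A_i(X)$ and $\varrho(\bar S^{(i)})=B_i(X)$), so no new work is needed; one simply invokes it. I would also note in passing that, unlike Proposition~\ref{previous}, here one cannot add a clause about $S$ raised to some universal power being a power of $\sigma$, since the exponents $p_i^{k_i-1}(p_i-1)$ and the degrees $\deg(S^{(i)})$ generally differ across $i$ (this is the content of Example~\ref{non-injex}), which is presumably why the statement of Theorem~\ref{injbij} stops at the four conditions.
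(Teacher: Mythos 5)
Your proposal is correct and follows exactly the paper's argument: the paper's proof is the one-line observation that each of properties (1)--(3) holds for $S$ precisely when it holds for every primary component $S^{(i)}$, after which Proposition~\ref{previous} applied to each prime power factor finishes the job. Your additional care about the compatibility of the identifications $\xi$, $j$ and $\varrho$ only spells out details the paper leaves implicit.
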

\begin{proof}  
Since $S$ has the property (j) from the theorem  for $j= 1,2,3$ precisely when each $S^{(i)}$ has (j), it suffices to apply Proposition~\ref{previous}.  
\end{proof}

As the next example shows, the condition in item (4) of Theorem~\ref{injbij} does not imply that $|\supp^* (S)| =1$.

\begin{example}\label{ex*}
Take $S=S_{f[0,1]\lambda}\colon \Z_6^{(\Z)}\to \Z_6^{(\Z)}$ with local rule defined by $f(x_0,x_1) = 3x_0 + 2x_1$ for every $x_0,x_1\in \Z_6$, that is, $\lambda_0=3$ and $\lambda_1=2$. Then $S$ is surjective by Theorem~\ref{injbij} and $\supp^*(\lambda)=\emptyset$; on the other hand, calling $p_1=2$ and $p_2=3$ in order to follow the above notation, $\supp^*(\bar\lambda^{(1)})=\{0\}$ and $\supp^*(\bar\lambda^{(2)})=\{1\}$.
\end{example}

Moreover, the equivalence between item (4) and item (3) of Theorem~\ref{injbij} can be rephrased as follows.

\begin{corollary}\label{Samurai:Thm}
Let $S=S_{\lambda}\colon \Z_m^{(\Z)}\to \Z_m^{(\Z)}$ be a finitary linear cellular automaton. Then $S$ is bijective and its inverse is again a finitary linear cellular automaton if and only if for every prime factor $p$ of $m$, there exists a unique $i\in\supp(\lambda)$ such that $(p,\lambda_i)=1$. 
\end{corollary}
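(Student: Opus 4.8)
The plan is to derive Corollary~\ref{Samurai:Thm} directly from Theorem~\ref{injbij}, specifically from the equivalence of conditions (3) and (4). First I would unwind the primary factorization: writing $m=p_1^{k_1}\cdots p_h^{k_h}$, the primary components are $S^{(j)}=S_{f[l,r]\bar\lambda^{(j)}}$ with $\bar\lambda^{(j)}=\pi_j^\Z(\lambda)$, so that $\supp^*(S^{(j)})=\supp^*(\bar\lambda^{(j)})=\{i\in\Z:(\,\overline{\lambda_i}^{(j)},p_j^{k_j})=1\}$. The key elementary observation is that for an element $a\in\Z_m$, the reduction $\pi_j(a)\in\Z_{p_j^{k_j}}$ is a unit (equivalently coprime to $p_j^{k_j}$) if and only if $p_j\nmid a$, i.e.\ $(p_j,a)=1$. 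Hence $i\in\supp^*(\bar\lambda^{(j)})\iff (p_j,\lambda_i)=1$, and in particular any such $i$ automatically lies in $\supp(\lambda)$ (since $(p_j,\lambda_i)=1$ forces $\lambda_i\neq 0$ as $p_j\mid m$).

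With this dictionary in hand, condition (4) of Theorem~\ref{injbij}, namely $|\supp^*(S^{(j)})|=1$ for every $j$, translates verbatim into: for every prime factor $p_j$ of $m$ there is exactly one $i$ with $(p_j,\lambda_i)=1$, and such $i$ necessarily belongs to $\supp(\lambda)$. That is precisely the right-hand condition in the statement. Then I would simply invoke the equivalence (3)$\Leftrightarrow$(4) of Theorem~\ref{injbij} to conclude that this condition holds if and only if $S$ is bijective with inverse in $\mathrm{CA}_f(\Z_m)$, which is the left-hand side of the corollary.

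I expect there are essentially no obstacles: the only point requiring a sentence of care is the equivalence $(p,a)=1 \iff \pi_p(a)$ is a unit in $\Z_{p^k}$ (where $p^k\|m$), together with the remark that a unit is in particular nonzero so the witnessing index lies in $\supp(\lambda)$ rather than merely in $\Z$. I would phrase the proof as: ``By Theorem~\ref{injbij}, $S$ is bijective with inverse in $\mathrm{CA}_f(\Z_m)$ iff $|\supp^*(S^{(j)})|=1$ for every $j\in\{1,\ldots,h\}$. For a fixed prime $p=p_j$ dividing $m$, an index $i$ lies in $\supp^*(\bar\lambda^{(j)})$ iff $p\nmid\lambda_i$, i.e.\ $(p,\lambda_i)=1$; and then $\lambda_i\ne 0$, so $i\in\supp(\lambda)$. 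Thus $|\supp^*(S^{(j)})|=1$ says exactly that there is a unique $i\in\supp(\lambda)$ with $(p,\lambda_i)=1$. Running this over all prime factors $p$ of $m$ gives the claim.'' That is the entire argument; no calculation beyond the gcd/unit fact is needed.
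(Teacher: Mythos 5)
Your proposal is correct and matches the paper exactly: the paper presents this corollary as a direct rephrasing of the equivalence (3)$\Leftrightarrow$(4) in Theorem~\ref{injbij}, via precisely the dictionary you spell out, namely that $i\in\supp^*(\bar\lambda^{(j)})$ if and only if $(p_j,\lambda_i)=1$, which in turn forces $i\in\supp(\lambda)$. The paper gives no further detail, so your careful justification of the gcd/unit translation is, if anything, slightly more explicit than the original.
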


Ito, Osato and Nasu~\cite{ION} already described the linear cellular automata on $\Z_m$ that are injective, in the $d$-dimensional case. So, for $d=1$ we cover \cite[Theorem~2]{ION}, and also~\cite[Theorem~3]{ION}, where they describe the linear cellular automata some power of whose coincides with the shift. Indeed, in view of Corollary~\ref{injsurjsurjinj}, Theorem~\ref{injbij} gives the following.

\begin{theorem}\label{injbijT}
Let $T\colon \Z_m^\Z\to \Z_m^\Z$ be a linear cellular automaton and $T=T^{(1)}\times\ldots\times T^{(h)}$ be the primary factorization of $T$. 
Then the following conditions are equivalent: 
\begin{itemize}
   \item[(1)] $T$ is injective; 
   \item[(2)] $T$ is bijective (i.e., $T$ is a group automorphism); 
   \item[(3)] $T$ is bijective and its inverse is again a linear cellular automaton; 
   \item[(4)]    all primary components $T^{(j)}$ satisfy $|\supp^* (T^{(j)})| =1$.
\end{itemize}
\end{theorem}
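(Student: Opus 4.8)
The plan is to deduce Theorem~\ref{injbijT} directly from Theorem~\ref{injbij} using Pontryagin duality, exactly as Corollary~\ref{injsurjsurjinj} was obtained, so that essentially no new computation is needed. First I would set up the dual picture: write $T=T_\lambda$ for the appropriate $\lambda\in L=\Z_m^{(\Z)}$, and recall from Theorem~\ref{dualCAintro} that $\widehat{T_\lambda}=S_{\lambda^\wedge}$, a finitary linear cellular automaton on $\Z_m$. Since $K=\Z_m^\Z$ is compact and $L$ is discrete, the standard properties of the duality functor give that $T_\lambda$ is injective if and only if $\widehat{T_\lambda}=S_{\lambda^\wedge}$ is surjective, and $T_\lambda$ is surjective if and only if $S_{\lambda^\wedge}$ is injective; likewise $T_\lambda$ is bijective (a topological automorphism) if and only if $S_{\lambda^\wedge}$ is bijective, and in that case $\widehat{(T_\lambda^{-1})}=(\widehat{T_\lambda})^{-1}=S_{\lambda^\wedge}^{-1}$, so $T_\lambda^{-1}$ is a linear cellular automaton precisely when $S_{\lambda^\wedge}^{-1}$ is a finitary one (again by Theorem~\ref{dualCAintro} applied to the dual, since $\widehat{S_{\lambda^\wedge}^{-1}}=T_{(\lambda^\wedge)^\wedge}^{\pm1}$-type reasoning identifies the dual of a finitary LCA as an LCA).

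Next I would translate condition~(4). By Lemma~\ref{wedgeconj}, $S_{\lambda^\wedge}$ and $S_\lambda$ are conjugate via the automorphism ${}^\wedge$ of $L$, hence they have the same injectivity/surjectivity/invertibility status, and moreover $\supp^*(\lambda^\wedge)=-\supp^*(\lambda)$, so $|\supp^*(S_{\lambda^\wedge})|=|\supp^*(S_\lambda)|=|\supp^*(\lambda)|$. The same holds primary component by primary component: the $j$-th primary component of $S_{\lambda^\wedge}$ is $S_{(\bar\lambda^{(j)})^\wedge}$, conjugate to $S_{\bar\lambda^{(j)}}$, and $\supp^*$ has the same cardinality. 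Therefore condition~(4) of Theorem~\ref{injbij} for $S_{\lambda^\wedge}$ is equivalent to the statement that all primary components $T^{(j)}$ of $T=T_\lambda$ satisfy $|\supp^*(T^{(j)})|=1$, which is condition~(4) here — I should check that the primary components behave correctly under duality, i.e.\ that $\widehat{T^{(j)}}$ is the $j$-th primary component of $\widehat{T_\lambda}=S_{\lambda^\wedge}$; this follows because duality turns the direct product $K=K_1\times\cdots\times K_h$ into the direct sum $L=L_1\times\cdots\times L_h$ and is compatible with restriction to a direct summand.

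Finally I would assemble the chain: $(1)\Leftrightarrow$ ($S_{\lambda^\wedge}$ surjective) $\Leftrightarrow$ ($S_\lambda$ surjective) $\Leftrightarrow$ (all primary components of $S_\lambda$ have $|\supp^*|=1$, by Theorem~\ref{injbij}) $\Leftrightarrow$ (all $T^{(j)}$ have $|\supp^*(T^{(j)})|=1$) $=(4)$; and by Theorem~\ref{injbij}, surjectivity of $S_{\lambda^\wedge}$ is also equivalent to $S_{\lambda^\wedge}$ being bijective with finitary LCA inverse, which dualizes to $(2)$ and $(3)$ for $T$. The implications $(3)\Rightarrow(2)\Rightarrow(1)$ are in any case trivial, so only the dualization of Theorem~\ref{injbij}$(1)\Rightarrow(3)$ really needs the above argument. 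The main potential obstacle — and the only point requiring genuine care rather than formula-chasing — is the bookkeeping that the dual of a \emph{finitary} linear cellular automaton is a (classical) linear cellular automaton and that primary factorization is compatible with Pontryagin duality; both are immediate from Theorem~\ref{dualCAintro} and the behaviour of $\widehat{\ }$ on direct sums/products recalled in \S\ref{pd-sec}, so in the write-up I would state these compatibilities explicitly and then let the duality functor do the rest.
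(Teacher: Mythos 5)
Your proposal is correct and follows essentially the same route as the paper, which derives Theorem~\ref{injbijT} in one line from Corollary~\ref{injsurjsurjinj} (itself a consequence of Theorem~\ref{dualCAintro} and Lemma~\ref{wedgeconj}) together with Theorem~\ref{injbij}; you simply spell out the duality bookkeeping (invertibility, primary components, and the fact that $|\supp^*(T^{(j)})|=|\supp^*(S^{(j)})|$ since both are governed by the same $\bar\lambda^{(j)}$) that the paper leaves implicit.
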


Moreover,~\cite[Theorem~1]{ION} describes the $d$-dimensional linear cellular automata on $\Z_m$ that are surjective. In dimension one this gives:  

\begin{theorem}\label{ION2}
Let $T=T_{\lambda}\colon \Z_m^\Z\to \Z_m^\Z$ be a linear cellular automaton. Then the following conditions are equivalent: 
\begin{itemize}
   \item[(1)] $T$ is surjective; 
   \item[(2)] for every prime factor $p$ of $m$, there exists $i\in\supp(\lambda)$ such that $(p,\lambda_i)=1$;
   \item[(3)] the finitary linear cellular automaton $S_{\lambda}\colon \Z_m^{(\Z)}\to \Z_m^{(\Z)}$ is injective. 
\end{itemize}
\end{theorem}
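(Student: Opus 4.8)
\textbf{Proof plan for Theorem~\ref{ION2}.}
The plan is to deduce this statement from results already established, using Corollary~\ref{injsurjsurjinj} and Theorem~\ref{injbij} as the main engines, so that essentially no new computation is required. First I would record the equivalence (1)$\Leftrightarrow$(3): by Corollary~\ref{injsurjsurjinj} applied to $T=T_\lambda$, the automaton $T_\lambda$ is surjective if and only if $S_\lambda$ is injective, which is exactly (3). This is immediate and should be stated first.

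Next I would establish (1)$\Leftrightarrow$(2). Here the plan is to pass through the primary factorization. By Theorem~\ref{injbij} (equivalently Corollary~\ref{Samurai:Thm}, which rephrases (4) in terms of $\supp(\lambda)$), $S_\lambda$ is injective (equivalently bijective) if and only if each primary component $S^{(j)}$ satisfies $|\supp^*(S^{(j)})|=1$. Writing $m=p_1^{k_1}\cdots p_h^{k_h}$ and recalling that $\supp^*(S^{(j)})=\supp^*(\bar\lambda^{(j)})=\{i\in\Z:(p_j,\lambda_i)=1\}$, the condition $|\supp^*(S^{(j)})|=1$ says precisely that there is a \emph{unique} $i\in\supp(\lambda)$ with $(p_j,\lambda_i)=1$. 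Condition (2) as stated only asks for \emph{existence} of such an $i$ for each prime $p$, so strictly speaking I must check that existence already forces uniqueness. The cleanest way: show directly that if for some prime $p\mid m$ there were two indices $i<i'$ with $(p,\lambda_i)=(p,\lambda_{i'})=1$, then $T_\lambda$ is not surjective — or, dually, that $S_\lambda$ is not injective. The latter is exactly the argument used in the proof of Proposition~\ref{previous}, implication (1)$\Rightarrow$(4): for the prime-power quotient $\Z_{p^k}^{(\Z)}$, take a nonzero finite configuration $x$; then $(S(x))_{N-l}=\lambda_l x_N$ and $(S(x))_{n-r}=\lambda_r x_n$ (after reducing to $l,r\in\supp^*$) are both nonzero, forcing $|\supp(S(x))|\ge2$, so the elements of support size $1$ are missed; combined with Corollary~\ref{injsurjsurjinj} this kills surjectivity of $T_\lambda$. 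Thus existence of the coprime index for every prime is equivalent to surjectivity, and automatically upgrades to uniqueness.

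I expect the main obstacle to be purely bookkeeping: making precise the identification $\supp^*(\bar\lambda^{(j)})=\{i:(p_j,\lambda_i)=1\}$ and the passage between ``for every prime factor $p$ of $m$'' in (2) and ``for every $j\in\{1,\dots,h\}$'' in the primary factorization — these are the same quantifier since the distinct prime factors of $m$ are exactly the $p_j$. Once that dictionary is set up, the implications chain together formally: (1)$\Leftrightarrow$(3) is Corollary~\ref{injsurjsurjinj}; (3)$\Leftrightarrow$(2) is Corollary~\ref{Samurai:Thm} together with the observation that existence of a coprime coefficient per prime is equivalent to uniqueness (the failure of uniqueness producing a non-injective $S_\lambda$ exactly as in Proposition~\ref{previous}). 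I would present the argument in this order: first (1)$\Leftrightarrow$(3), then reduce (2) to the condition on primary components via $\supp^*$, then invoke Theorem~\ref{injbij}/Corollary~\ref{Samurai:Thm}, closing the loop.
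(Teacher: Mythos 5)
Your treatment of (1)$\Leftrightarrow$(3) is exactly the paper's: both invoke Corollary~\ref{injsurjsurjinj}. For (1)$\Leftrightarrow$(2) the paper simply cites the one-dimensional case of~\cite[Theorem~1]{ION}, so an internal derivation would be a genuine improvement --- but your proposed derivation contains a real error, not just bookkeeping.

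The problem is that you have swapped the two dual conditions. Theorem~\ref{injbij} (and Corollary~\ref{Samurai:Thm}) characterizes \emph{surjectivity} (equivalently bijectivity) of $S_\lambda$ by the condition $|\supp^*(S^{(j)})|=1$ for all $j$; it does \emph{not} characterize injectivity of $S_\lambda$, and your parenthetical ``injective (equivalently bijective)'' is false for finitary linear cellular automata. Consequently your claim that ``existence already forces uniqueness'' is wrong, and the paper itself provides the counterexample: in Example~\ref{Exa:InjSurj}, $m=3$ and $\lambda_{-1}=\lambda_0=\lambda_1=1$, so \emph{three} indices $i$ satisfy $(3,\lambda_i)=1$; there $T_\lambda$ is surjective and $S_\lambda$ is injective but not surjective. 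The argument you import from Proposition~\ref{previous} (that two coprime coefficients force $|\supp(S(x))|\ge 2$, so configurations of support size $1$ are missed) shows that $S_\lambda$ is not \emph{surjective}, i.e.\ that $T_\lambda$ is not \emph{injective} --- it says nothing against surjectivity of $T_\lambda$ or injectivity of $S_\lambda$. The correct dictionary is: condition (2) (mere existence of a coprime coefficient for each prime) is equivalent to injectivity of $S_\lambda$, hence to surjectivity of $T_\lambda$; the \emph{uniqueness} strengthening is equivalent to surjectivity of $S_\lambda$, hence to injectivity of $T_\lambda$ (Theorem~\ref{injbijT}). To repair your plan you would need a direct proof that $S_\lambda$ is injective if and only if $\supp^*(\bar\lambda^{(j)})\neq\emptyset$ for every $j$: one direction follows because $\supp^*(\bar\lambda^{(j)})=\emptyset$ makes $S^{(j)}$ nilpotent (as in the proof of Theorem~\ref{Th5}(1)), and the converse needs an argument that a Laurent polynomial over $\Z_{p^k}$ with a unit coefficient is a non-zero-divisor on $\Z_{p^k}^{(\Z)}$ (e.g.\ by reducing modulo $p$ and using that $\Z_p[X,X^{-1}]$ is a domain). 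As written, the proposal proves the wrong equivalence.
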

\begin{proof} 
The equivalence of (1) and (2) is the one-dimensional case of~\cite[Theorem~1]{ION}, while the equivalence of (1) and (3) is Corollary~\ref{injsurjsurjinj}. 
\end{proof}

\begin{example}\label{Exa:InjSurj}
Due to the above theorems concerning injectivity and surjectivity of (finitary) linear cellular automata on $\Z_m$,
one can immediately find examples of surjective non-injective linear cellular automata and of injective non-surjective finitary linear cellular automata. Indeed, it suffices to find a linear cellular automaton $T=T_{f[l,r]\lambda}\colon \Z_m^\Z\to \Z_m^\Z$ such that:
\begin{itemize}
\item[(1)] for every prime factor $p$ of $m$ there exists $i\in\{l,\ldots,r\}$ such that $(p,\lambda_i)=1$;
\item[(2)] there is a prime factor $p$ of $m$ such that $(p,\lambda_i)=1=(p,\lambda_j)$ for at least two indices $i\neq j$ in $\{l,\ldots,r\}$.
\end{itemize}
For example,~\cite[Example 5.2.1]{CC} works perfectly. There, one has $m=3$, $l=-1$, $r=1$ and $f\colon \Z_3^{\{-1,0,1\}}\to \Z_3$ defined by $f(x_{-1},x_0,x_1)=x_{-1}+x_0+x_1$, that is, $\lambda\in\Z_3^\Z$ has $\supp(\lambda)=\supp^*(\lambda)=\{-1,0,1\}$ and $\lambda_{-1}=\lambda_0=\lambda_1=1$. Then this $T_{f[-1,1]\lambda}$ is surjective but non-injective, and equivalently, $S_{f[-1,1]\lambda}$ is injective but not surjective.
\end{example}

\section{The algebraic entropy of finitary linear cellular automata}\label{halgS}

In \S\ref{Sec:Feb26} we recall the necessary background on the algebraic entropy. Then in \S\ref{SSec1} we compute the algebraic entropy of a finitary linear cellular automaton that is leftmost and/or rightmost permutive. The general case is contained in \S\ref{Sec2}.

\subsection{Algebraic entropy}\label{Sec:Feb26}

Following~\cite{DGB0}, let $L$ be an abelian group and $\phi\colon L\to L$ an endomorphism. Denote by $\Pf(L)$ the family of all non-empty finite subsets of $L$.
For $F\in\Pf(L)$ and $n\in\N_+$, the \emph{$n$-th $\phi$-trajectory of $F$} is
$$\T_n(\phi,F):=F+\phi(F)+\ldots+\phi^{n-1}(F).$$
The limit
\begin{equation*}
H_{alg}(\phi,F):={\lim_{n\to\infty}\frac{\log|\T_n(\phi,F)|}{n}}
\end{equation*}
exists and it is the \emph{algebraic entropy of $\phi$ with respect to $F$}.
The \emph{algebraic entropy of $\phi$} is
\begin{equation*}
h_{alg}(\phi):=\sup\{H_{alg}(\phi,F): F\in\Pf(L)\}.
\end{equation*}

\begin{remark}\label{cofinal} In case $E\subseteq F$ are non-empty finite subsets of $L$, $H_{alg}(\phi,E)\leq H_{alg}(\phi,F)$.
So, if $\mathcal F$ is a cofinal subfamily of $(\Pf(L),\subseteq)$, then $h_{alg}(\phi)=\sup\{H_{alg}(\phi,F):F\in \mathcal F\}$.
\end{remark}

For example, we can always assume that $0\in F$, that is, letting $\Pf^0(L):=\{F\in\Pf(L):0\in F\}$, we have that $h_{alg}(\phi)=\sup\{H_{alg}(\phi,F):F\in\Pf^0(L)\}$.

Moreover, in case $L$ is torsion (as for example $L=\Z_m^{(\Z)}$), the family $\mathcal F(L)$ of all finite subgroups of $L$ is cofinal in $\Pf(L)$ as every non-empty finite subset $F$ of $L$ is contained in the finite subgroup $\langle F\rangle$ generated by $F$; so, $h_{alg}(\phi)=\sup\{H_{alg}(\phi,F):F\in\mathcal F(L)\}$.

\smallskip
We recall here the basic properties of the algebraic entropy that we use in this section.

\begin{proposition}\label{properties}
Let $L$ be an abelian group and $\phi\colon L\to L$ an endomorphism.
\begin{itemize}
\item[(1)] \emph{[Invariance under conjugation]} If $M$ is another abelian group, $\eta\colon M\to M$ is an endomorphism and $\phi$ and $\eta$ are conjugated by the isomorphism $\xi\colon L \to M$ (i.e., $\eta=\xi\circ\phi\circ\xi^{-1}$), then $h_{alg}(\phi)=h_{alg}(\eta)$.
\item[(2)] \emph{[Logarithmic Law]} For every $k\in\N_+$, $h_{alg}(\phi^k) = k h_{alg}(\phi)$. If $\phi$ is an automorphism, then $h_{alg}(\phi^k) = |k|h_{alg}(\phi)$ for every $k\in\Z$.
\item[(3)] \emph{[Continuity]} If $L$ is a direct limit of $\phi$-invariant subgroups $\{L_i:i\in I\}$, then $h_{alg}(\phi)=\sup_{i\in I}h_{alg}(\phi\restriction_{L_i})$.
\item[(4)] \emph{[Weak Addition Theorem]} If $L=L_1\times L_2$, $\phi=\phi_1\times \phi_2$ with $\phi_i\colon L_i\to L_i$ and $F_i\in\P_{fin}(L_i)$, $i=1,2$, then $H_{alg}(\phi,F_1\times F_2)=H_{alg}(\phi_1,F_1)+H_{alg}(\phi_2,F_2)$; consequently, $h_{alg}(\phi_1\times\phi_2)=h_{alg}(\phi_1)+h_{alg}(\phi_2)$.
\end{itemize}
\end{proposition}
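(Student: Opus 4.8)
The plan is to verify each of the four statements by direct manipulation of the finite sumsets $\T_n(\phi,F)$, in the spirit of~\cite{DGB0}. For \emph{invariance under conjugation}, I would first note that $\eta=\xi\circ\phi\circ\xi^{-1}$ forces $\eta^j=\xi\circ\phi^j\circ\xi^{-1}$ for all $j$, and that the group isomorphism $\xi$ commutes with finite Minkowski sums; hence $\T_n(\eta,\xi(F))=\xi(\T_n(\phi,F))$, a set of the same cardinality as $\T_n(\phi,F)$. Passing to the limit gives $H_{alg}(\eta,\xi(F))=H_{alg}(\phi,F)$, and since $F\mapsto\xi(F)$ is a bijection $\Pf(L)\to\Pf(M)$, taking suprema yields $h_{alg}(\eta)=h_{alg}(\phi)$.

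For the \emph{Logarithmic Law}, the key step is the observation that, setting $F':=\T_k(\phi,F)$, a reindexing of the double sum $\sum_{i=0}^{n-1}\phi^{ki}\big(\sum_{j=0}^{k-1}\phi^j(F)\big)$ collapses it to $\sum_{l=0}^{kn-1}\phi^l(F)=\T_{kn}(\phi,F)$; hence $H_{alg}(\phi^k,F')$ equals $k$ times $\lim_n\frac{\log|\T_{kn}(\phi,F)|}{kn}=H_{alg}(\phi,F)$, giving $h_{alg}(\phi^k)\ge k\,h_{alg}(\phi)$. The reverse inequality is immediate from the inclusion $\T_n(\phi^k,F)\subseteq\T_{kn}(\phi,F)$. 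When $\phi$ is an automorphism I would additionally use $\T_n(\phi^{-1},F)=\phi^{-(n-1)}(\T_n(\phi,F))$ to get $h_{alg}(\phi^{-1})=h_{alg}(\phi)$, and combine this with $h_{alg}(\mathrm{id}_L)=0$ (the $n$-fold sumset $F+\cdots+F$ has cardinality polynomial in $n$) to cover every $k\in\Z$.

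The remaining two items are shorter. For \emph{continuity}, since $L=\bigcup_iL_i$ is a directed union, every $F\in\Pf(L)$ already lies in some $L_i$, and the $\phi$-invariance of $L_i$ gives $\T_n(\phi,F)=\T_n(\phi\restriction_{L_i},F)$; both inequalities between $h_{alg}(\phi)$ and $\sup_i h_{alg}(\phi\restriction_{L_i})$ then follow from this single identity. For the \emph{Weak Addition Theorem}, I would combine $\phi^j=\phi_1^j\times\phi_2^j$ with the product rule $(A_1\times A_2)+(B_1\times B_2)=(A_1+B_1)\times(A_2+B_2)$ to obtain $\T_n(\phi,F_1\times F_2)=\T_n(\phi_1,F_1)\times\T_n(\phi_2,F_2)$, so that $H_{alg}$ splits as a sum; the claim about $h_{alg}(\phi_1\times\phi_2)$ then follows because the sets $F_1\times F_2$ are cofinal in $\Pf(L_1\times L_2)$ (Remark~\ref{cofinal}) and the two summands vary independently. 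The only genuinely delicate point I anticipate is inside the Logarithmic Law: one must be sure the limit along the subsequence $\{kn\}_n$ coincides with $\lim_n\frac{\log|\T_n(\phi,F)|}{n}$, which is legitimate precisely because that full limit is already known to exist; everything else is routine bookkeeping.
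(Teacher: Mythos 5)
Your proposal is correct, and all four arguments are the standard ones from~\cite{DGB0}; the paper itself states this proposition without proof, merely recalling it from that reference, so there is nothing to diverge from. The only point worth a footnote is that the inclusion $\T_n(\phi^k,F)\subseteq\T_{kn}(\phi,F)$ in your Logarithmic Law argument literally requires $0\in F$ (otherwise one should instead invoke $|A+B|\geq|A|$ for nonempty $B$, or restrict to the cofinal family $\Pf^0(L)$ as in Remark~\ref{cofinal}), but this is routine and already covered by the paper's own reduction to sets containing $0$.
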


 Even if we do not apply it in the sequel, next we recall one of the fundamental properties of the algebraic entropy, that is, the so-called Addition Theorem, which in particular covers Proposition~\ref{properties}(4).

\begin{theorem}\emph{\cite{DGB0}}
Let $L$ be a an abelian group, $\phi\colon L\to L$ an endomorphism and $M$ a subgroup of $L$ such that $\phi(M)\subseteq M$. Then $$h_{alg}(\phi)=h_{alg}(\phi\restriction_M)+h_{alg}(\overline\phi_{L/M}),$$ where $\phi\restriction_M\colon M\to M$ and $\overline\phi_{L/M}$ denotes the endomorphism of $L/M$ induced by $\phi$.
\end{theorem}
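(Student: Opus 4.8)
The plan is to prove the Addition Theorem as the conjunction of the inequality $h_{alg}(\phi)\ge h_{alg}(\phi\restriction_M)+h_{alg}(\overline\phi_{L/M})$ (superadditivity) and the reverse inequality (subadditivity). Throughout I would use three elementary facts about trajectories, all immediate from the additivity of the iterates $\phi^i$: (a) $\T_n(\phi,A+B)=\T_n(\phi,A)+\T_n(\phi,B)$ for Minkowski sums; (b) for the quotient map $\pi\colon L\to L/M$ one has $\pi(\T_n(\phi,F))=\T_n(\overline\phi_{L/M},\pi(F))$; (c) $A\subseteq B$ implies $\T_n(\phi,A)\subseteq\T_n(\phi,B)$, and translates of a finite set have the same cardinality. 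I would also record the two monotonicity facts $h_{alg}(\phi\restriction_M)\le h_{alg}(\phi)$ (a trajectory in $M$ is a trajectory in $L$) and $h_{alg}(\overline\phi_{L/M})\le h_{alg}(\phi)$ (lift $\bar F\in\Pf^0(L/M)$ to $F\in\Pf^0(L)$ with $\pi\restriction_F$ injective; then $|\T_n(\phi,F)|\ge|\T_n(\overline\phi_{L/M},\bar F)|$ by (b)), the first of which is reused in the hard direction.

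For superadditivity, fix $F_1\in\Pf^0(M)$ and $\bar F_2\in\Pf^0(L/M)$, lift $\bar F_2$ to a finite $F_2\subseteq L$ with $0\in F_2$ and $\pi\restriction_{F_2}$ a bijection onto $\bar F_2$, and set $F:=F_1+F_2$. By (a), $\T_n(\phi,F)=\T_n(\phi\restriction_M,F_1)+\T_n(\phi,F_2)$, and by (b) its $\pi$-image is $\T_n(\overline\phi_{L/M},\bar F_2)$. For each $y$ in that image choose $v_y\in\T_n(\phi,F_2)$ with $\pi(v_y)=y$; then $\T_n(\phi\restriction_M,F_1)+v_y$ is contained in $\T_n(\phi,F)$, lies in the $\pi$-fibre over $y$, and has exactly $|\T_n(\phi\restriction_M,F_1)|$ elements. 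As fibres over distinct $y$'s are disjoint, $|\T_n(\phi,F)|\ge|\T_n(\overline\phi_{L/M},\bar F_2)|\cdot|\T_n(\phi\restriction_M,F_1)|$; taking $\tfrac1n\log$, letting $n\to\infty$, and then the supremum over $F_1$ and $\bar F_2$ (Remark~\ref{cofinal}) gives superadditivity.

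For subadditivity, fix $F\in\Pf^0(L)$, put $\bar F:=\pi(F)$, pick a section $s\colon\bar F\to F$ with $s(\bar 0)=0$, and set $E:=\{x-s(\pi(x)):x\in F\}\subseteq M$, a finite set containing $0$; then $F\subseteq s(\bar F)+E$, so by (c) and (a), $\T_n(\phi,F)\subseteq\T_n(\phi,s(\bar F))+\T_n(\phi\restriction_M,E)$. Working fibre by fibre over $\pi$: the image of the right-hand side is $\T_n(\overline\phi_{L/M},\bar F)$, and fixing in each non-empty fibre a point $a_0$ coming from $\T_n(\phi,s(\bar F))$ and subtracting it shows that the fibre embeds into $M\cap\bigl(\T_n(\phi,s(\bar F)-s(\bar F))+\T_n(\phi\restriction_M,E)\bigr)=M\cap\T_n(\phi,D)$, where $D:=(s(\bar F)-s(\bar F))+E$ is a fixed finite subset of $L$ containing $0$. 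Hence $|\T_n(\phi,F)|\le|\T_n(\overline\phi_{L/M},\bar F)|\cdot|M\cap\T_n(\phi,D)|$, and everything reduces to the following Key Lemma: for every finite $D\subseteq L$, $\limsup_{n}\tfrac1n\log|M\cap\T_n(\phi,D)|\le h_{alg}(\phi\restriction_M)$. Granting it, dividing the last inequality by $n$ and letting $n\to\infty$ gives $H_{alg}(\phi,F)\le h_{alg}(\overline\phi_{L/M})+h_{alg}(\phi\restriction_M)$, and the supremum over $F$ finishes.

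The Key Lemma is where the real difficulty lies, and I expect it to be the main obstacle: $M\cap\T_n(\phi,D)$ is in general not a single $(\phi\restriction_M)$-trajectory, since the summands $\phi^i(d)$ need not lie in $M$, so one must show that the ``$M$-part'' of a bounded trajectory in $L$ cannot grow faster than genuine trajectories inside $M$. My plan would be: using Proposition~\ref{properties}(3), replace $L$ by the $\phi$-invariant subgroup $N:=\langle\phi^i(d):i\ge 0,\ d\in D\rangle$, which leaves $M\cap\T_n(\phi,D)=(M\cap N)\cap\T_n(\phi,D)$ unchanged while $h_{alg}(\phi\restriction_{M\cap N})\le h_{alg}(\phi\restriction_M)$ by the monotonicity above; this makes $L$ countable and finitely generated as a module over the subring $\Z[\phi]$ of $\mathrm{End}(L)$. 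In that situation $M$ too is a finitely generated $\Z[\phi]$-module (as $\Z[\phi]$ is Noetherian), and one can then estimate $|M\cap\T_n(\phi,D)|$ through the structure theory of finitely generated $\Z[\phi]$-modules together with the multiplicativity of the logarithmic Mahler measure along short exact sequences — the circle of ideas around the Algebraic Yuzvinski Formula — or else simply invoke the Key Lemma from~\cite{DGSZ,DGB0}. Assembling superadditivity and subadditivity then yields $h_{alg}(\phi)=h_{alg}(\phi\restriction_M)+h_{alg}(\overline\phi_{L/M})$.
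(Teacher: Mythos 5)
The paper offers no proof of this statement: it is quoted from \cite{DGB0} and, as the surrounding text says explicitly, is recalled only for completeness and never used in the sequel. So the only comparison available is with the proofs in the cited literature. Judged on its own terms, your proposal gets the architecture right: the superadditivity argument (take $F=F_1+F_2$, use $\T_n(\phi,F)=\T_n(\phi\restriction_M,F_1)+\T_n(\phi,F_2)$ and count disjoint translates of $\T_n(\phi\restriction_M,F_1)$ inside the fibres of $\pi$) is complete and correct, and the reduction of the reverse inequality to the bound $\limsup_n\tfrac1n\log|M\cap\T_n(\phi,D)|\leq h_{alg}(\phi\restriction_M)$ for a fixed finite $D\subseteq L$ is also correct and is indeed the standard first step in \cite{DGSZ,DGB0}.

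The genuine gap is that your Key Lemma is never proved, and it is not a technicality: it carries the entire content of the theorem, since in general $M\cap\T_n(\phi,D)$ is not contained in any trajectory $\T_n(\phi\restriction_M,D')$ with $D'\subseteq M$ finite and fixed, which is exactly the obstruction the Addition Theorem has to overcome. Neither of your two exits closes it. Invoking ``the Key Lemma from \cite{DGSZ,DGB0}'' is circular here, because that lemma is the core of the proof of the very theorem being established. The alternative sketch --- pass to the $\phi$-invariant subgroup $N$ generated by the trajectory of $D$, observe that $M\cap N$ is a finitely generated $\Z[\phi]$-module, and conclude by ``multiplicativity of the logarithmic Mahler measure along short exact sequences'' --- does not work as stated: the Mahler measure is multiplicative over products of polynomials, not additive along exact sequences of modules, and the additivity of entropy along exact sequences of finitely generated $\Z[X]$-modules is precisely an instance of the Addition Theorem, whose known proof requires the Algebraic Yuzvinski Formula together with separate (and quite different) reduction arguments in the torsion case --- which is the case actually relevant to this paper, since $L=\Z_m^{(\Z)}$ is torsion and there \cite{DGSZ} proceeds by reductions to bounded $p$-groups rather than through Mahler measures. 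In short: you have correctly reduced the theorem to its hard core, but the hard core is left unproved.
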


Also a Uniqueness Theorem is available for the algebraic entropy. For more details also on the connection of the algebraic entropy with number theory, see~\cite{DGB0}.

\smallskip
The following is the so-called Bridge Theorem (see~\cite{DGB1,DGB2}). 
 
\begin{theorem}[Bridge Theorem]\label{BT}
If $K$ is a compact abelian group and $\phi\colon K\to K$ is a continuous endomorphism, then $h_{top}(\phi)=h_{alg}(\widehat\phi)$.
Analogously, if $L$ is a discrete abelian group and $\psi\colon L\to L$ is an endomorphism, then $h_{alg}(\psi)=h_{top}(\widehat\psi)$.
\end{theorem}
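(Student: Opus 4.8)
\emph{Reduction to the compact case.} The two displayed statements are equivalent: by the Pontryagin duality Theorem the bidual map $\omega_L\colon L\to\widehat{\widehat L}$ is an isomorphism conjugating $\psi$ with $\widehat{\widehat\psi}$, so, granting the first statement for the continuous endomorphism $\widehat\psi$ of the compact group $\widehat L$, Proposition~\ref{properties}(1) yields $h_{alg}(\psi)=h_{alg}(\widehat{\widehat\psi})=h_{top}(\widehat\psi)$. Hence it suffices to prove $h_{top}(\phi)=h_{alg}(\widehat\phi)$ for a continuous endomorphism $\phi\colon K\to K$ of a compact abelian group; write $L:=\widehat K$ and $\psi:=\widehat\phi$. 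The basic tool is the annihilator dictionary: for a closed subgroup $N\leq K$ put $N^\perp:=\{\chi\in L:\chi(N)=\{0\}\}$; then $N\mapsto N^\perp$ is an order-reversing bijection between closed subgroups of $K$ and subgroups of $L$, with $N$ open $\iff$ $K/N$ finite $\iff$ $N^\perp$ finite and $\widehat{K/N}\cong N^\perp$ canonically; moreover $(\phi^{-1}(N))^\perp=\psi(N^\perp)$ (using that $\phi(K)$ is closed and that characters of closed subgroups extend), so $N$ is $\phi$-invariant iff $N^\perp$ is $\psi$-invariant, and then $\widehat{\overline\phi_{K/N}}=\psi\restriction_{N^\perp}$.

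\emph{The totally disconnected case.} I would first settle the case $K$ totally disconnected --- which is precisely what is used in this paper, and the computational core of the general case. By van Dantzig's theorem the open subgroups $U$ of $K$ form a base at $0$, and the partitions $\mathcal U_U:=\{x+U:x\in K\}$ are cofinal among open covers of $K$, so $h_{top}(\phi)=\sup_U H_{top}(\phi,\mathcal U_U)$. For such $U$ the join $\bigvee_{k=0}^{n-1}\phi^{-k}(\mathcal U_U)$ is the partition into cosets of $U_n:=\bigcap_{k=0}^{n-1}\phi^{-k}(U)$, so its minimal subcover has $[K:U_n]$ members. With $V:=U^\perp$, a finite subgroup of $L$, the dictionary gives $(\phi^{-k}(U))^\perp=\psi^k(V)$, whence $U_n^\perp=\sum_{k=0}^{n-1}\psi^k(V)=\T_n(\psi,V)$ and thus $[K:U_n]=|\widehat{K/U_n}|=|U_n^\perp|=|\T_n(\psi,V)|$. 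Therefore
\[
H_{top}(\phi,\mathcal U_U)=\lim_{n\to\infty}\frac{\log[K:U_n]}{n}=\lim_{n\to\infty}\frac{\log|\T_n(\psi,V)|}{n}=H_{alg}(\psi,V).
\]
Since $L$ is torsion, finite subgroups are cofinal in $\Pf(L)$, so $h_{alg}(\psi)=\sup_V H_{alg}(\psi,V)$ over finite subgroups; as $U\mapsto U^\perp$ matches the two index sets, $h_{top}(\phi)=h_{alg}(\psi)$, as desired.

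\emph{The general case and the main obstacle.} For arbitrary compact abelian $K$ one reduces to the previous situation using the structure theory of compact abelian groups: one passes to suitable $\phi$-invariant quotients, exploiting the continuity of $h_{alg}$ along direct limits of $\psi$-invariant subgroups (Proposition~\ref{properties}(3)) and its dual counterpart for $h_{top}$, uses the Weak Addition Theorem (Proposition~\ref{properties}(4)) to split off a finite-dimensional torus part, and on that part invokes the topological Yuzvinski formula together with its algebraic analogue, the totally disconnected part being covered by the computation above. The delicate point is that the $\psi$-invariant subgroups that naturally arise need not be finitely generated, and that the two Yuzvinski formulas must be matched; this reduction is the substance of the general theorem and is carried out in~\cite{DGB1,DGB2}, which is why here it is taken as given.
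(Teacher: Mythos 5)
The paper does not actually prove this statement: it is quoted as a known result with a pointer to~\cite{DGB1,DGB2}, so there is no internal proof to match yours against. What you have written is consistent with that and goes further in a useful direction. Your reduction of the discrete statement to the compact one via $\omega_L$ and invariance under conjugation is correct, and your proof for totally disconnected $K$ is correct and essentially self-contained: the cofinality of the coset covers $\mathcal U_U$ (van Dantzig plus the Lebesgue-type refinement argument), the identity $(\phi^{-1}(N))^\perp=\widehat\phi(N^\perp)$ (which does require extending characters from the closed subgroup $\phi(K)$, as you note), the identification $[K:U_n]=|U_n^\perp|=|\T_n(\widehat\phi,U^\perp)|$, and the matching of the two suprema through the bijection $U\mapsto U^\perp$ between open subgroups of $K$ and finite subgroups of the torsion group $\widehat K$ all check out. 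This special case is exactly the one the paper uses ($K=\Z_m^\Z$, $\widehat K=\Z_m^{(\Z)}$), and in fact it is the content of~\cite{DGB2}. For the general compact abelian $K$ your outline (reduction via continuity along direct limits to finitely generated duals, splitting off the torus part, and matching the topological and algebraic Yuzvinski formulas) is the right shape of the argument in~\cite{DGB1}, but it is a sketch, not a proof; since you flag this explicitly and the paper itself only cites the literature here, this is an acceptable place to stop.
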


The Bridge Theorem connects the algebraic entropy and the topological entropy via Pontryagin duality. On the other hand, it is well known that  the topological entropy of a continuous  surjective endomorphism of a compact group coincides with its measure entropy (proved by Aoki for compact metrizable abelian groups and by Stoyanov~\cite{St} for arbitrary compact groups).

\subsection{Permutive finitary linear cellular automata}\label{SSec1}

For a non-zero finitary linear cellular automaton $S_{f[l,r]\lambda}\colon \Z_m^{(\Z)}\to \Z_m^{(\Z)}$, here we impose the restraint on $\lambda$ to satisfy either $(\lambda_{l},m) = 1$ (i.e., $l\in\supp^*(\lambda)$) or $(\lambda_{r},m)=1$ (i.e., $r\in\supp^*(\lambda)$). These conditions mean respectively that $\lambda_l^n$ and $\lambda_r^n$  are invertible in $\Z_m$ for every $n\in\Z$, and they can be expressed by using the following classical notions, that we introduce in our case of finitary linear cellular automata.

\begin{definition}{\rm\cite{Favati1997,Sab}}\label{left-permutive}
A finitary linear cellular automaton $S_{f[l,r]\lambda}\colon \Z_m^{(\Z)}\to \Z_m^{(\Z)}$ is called:
\begin{itemize}
    \item[(1)] \emph{leftmost} \emph{permutive} if $l < 0$ and $(m, \lambda_l) = 1$ (i.e., $l < 0$ and $l\in\supp^*(\lambda)$);
    \item[(2)] \emph{rightmost} \emph{permutive} if $r > 0$ and $(m, \lambda_r) = 1$ (i.e., $r > 0$ and $r\in\supp^*(\lambda)$);
    \item[(3)] \emph{bipermutive} if it is both leftmost and rightmost permutive.
\end{itemize}
\end{definition}

Now we characterize bipermutivity by means of the degree. The proof is straightforward.

\begin{lemma}\label{permutivedeg}
Let $S=S_{f[l,r]\lambda}\colon \Z_m^{(\Z)}\to \Z_m^{(\Z)}$ be a finitary linear cellular automaton with $l,r\in\supp(\lambda)$.
Then:
\begin{itemize} 
\item[(1)] in case $l\leq r\leq 0$, $l\in\supp^*(\lambda)$ if and only if $\deg(S)=-l$;
\item[(2)] in case $0\leq l\leq r$, $r\in\supp^*(\lambda)$ if and only if $\deg(S)=r$;
\item[(3)] $S$ is bipermutive if and only if $l<0<r$ and $\deg(S)=r-l$.
\end{itemize}
\end{lemma}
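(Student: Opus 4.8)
The plan is to run the argument directly off Definition~\ref{degree}, using only the chain $\supp^*(\lambda)\subseteq\supp(\lambda)\subseteq\{l,\dots,r\}$. The hypothesis $l,r\in\supp(\lambda)$ says $l=\min\supp(\lambda)$ and $r=\max\supp(\lambda)$, so whenever $\supp^*(\lambda)\neq\emptyset$, setting $l':=\min\supp^*(\lambda)$ and $r':=\max\supp^*(\lambda)$ we automatically get $l\leq l'\leq r'\leq r$; moreover $l'=l$ is then equivalent to $l\in\supp^*(\lambda)$, and likewise $r'=r$ to $r\in\supp^*(\lambda)$. With this, each item is a short case analysis on which branch of the formula for $\deg(S)$ is active. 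One further elementary observation streamlines it: if $r'\leq 0$ then $\deg(S)=-l'$, and if $l'\geq 0$ then $\deg(S)=r'$ (both immediate by inspecting the three branches).

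For (1), assume $l\leq r\leq 0$. If $l\in\supp^*(\lambda)$, then $\supp^*(\lambda)\neq\emptyset$, $l'=l$ and $r'\leq 0$, so $\deg(S)=-l'=-l$. Conversely, if $\deg(S)=-l$ then $\supp^*(\lambda)\neq\emptyset$ (else $\deg(S)=0$, excluded when $l<0$), and $r'\leq r\leq 0$ gives $\deg(S)=-l'$, so $-l=-l'$, i.e. $l=l'\in\supp^*(\lambda)$. Item (2) is the mirror image in the regime $0\leq l\leq r$ (alternatively, apply (1) to $\lambda^\wedge$, since $\wedge$ reflects $\{l,\dots,r\}$ onto $\{-r,\dots,-l\}$ and $\deg(S_\lambda)=\deg(S_{\lambda^\wedge})$ by Lemma~\ref{wedgeconj} and Proposition~\ref{properties}(1); a direct check is shorter).

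For (3), recall from Definition~\ref{left-permutive} that $S$ is bipermutive exactly when $l<0$, $r>0$ and $l,r\in\supp^*(\lambda)$. If so, $l'=l<0<r=r'$, the third branch applies and $\deg(S)=r'-l'=r-l$. Conversely, assume $l<0<r$ and $\deg(S)=r-l$ (hence $\supp^*(\lambda)\neq\emptyset$). If $l'\geq 0$ then $\deg(S)=r'\leq r<r-l$, impossible, so $l'<0$; if $r'\leq 0$ then $\deg(S)=-l'\leq -l<r-l$, impossible, so $r'>0$. Thus the third branch is active, $r'-l'=r-l$, and with $l\leq l'$, $r'\leq r$ this forces $l'=l$ and $r'=r$, i.e. $l,r\in\supp^*(\lambda)$; combined with $l<0<r$ this is bipermutivity.

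No genuine obstacle arises: the whole proof is bookkeeping against the piecewise definition of the degree. The only points needing care are discarding the case $\supp^*(\lambda)=\emptyset$ (where $\deg(S)=0$) in the converse directions, and the degenerate boundary subcases $l=0$ in (1) and $r=0$ in (2), where the memory set collapses to $\{0\}$.
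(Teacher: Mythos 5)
Your case analysis is exactly the ``straightforward'' argument the paper has in mind --- it gives no proof at all, only the sentence ``The proof is straightforward'' --- so there is nothing different to compare against: reading off the three branches of Definition~\ref{degree} together with $l\leq l'\leq r'\leq r$ is the intended route, and your treatment of (3) and of the converses when $l<0$ (resp.\ $r>0$) is complete and correct. The one point to be honest about is the boundary case you defer at the end: it is not merely a case ``needing care'', it is a place where the stated equivalence actually fails. If $l=r=0$ (which is forced in case (1) once $l=0$) and $\lambda_0$ is a nonzero non-unit modulo $m$ --- say $m=4$ and $\lambda_0=2$ --- then $\supp^*(\lambda)=\emptyset$, so $\deg(S)=0=-l$, yet $l\notin\supp^*(\lambda)$; the same degenerate configuration defeats the converse of (2). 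So your parenthetical ``excluded when $l<0$'' is doing real work, and the sentence ``no genuine obstacle arises'' should be replaced by the observation that items (1) and (2) are only true as biconditionals when $l<0$ (resp.\ $r>0$), or when $\supp^*(\lambda)\neq\emptyset$ is assumed. This does not affect the paper: Theorem~\ref{Th4} only invokes the forward implications, which you prove in full generality.
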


\begin{theorem}\label{Th4}
Let $S=S_{f[l,r]\lambda}\colon \Z_m^{(\Z)}\to \Z_m^{(\Z)}$ be a non-zero finitary linear cellular automaton with $l,r\in\supp(\lambda)$.
\begin{itemize}
\item[(1)] If $l\leq r\leq0$ and $l\in\supp^*(\lambda)$, then $h_{alg}(S)=-l\log m$.
\item[(2)] If $0\leq l\leq r$ and $r\in\supp^*(\lambda)$, then $h_{alg}(S)=r\log m$.
\item[(3)] If $l<0<r$ and $l,r\in\supp^*(\lambda)$ (i.e., $S$ is bipermutive), then $h_{alg}(S)= (r-l)\log m$.
\end{itemize}
In all cases, under the corresponding assumption, the conclusion is that $h_{alg}(S)=\deg(S)\log m$.
\end{theorem}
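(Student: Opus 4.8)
The plan is to reduce the three cases to a single model computation for a shift-like automaton and then transport the answer by conjugation. First I would observe that, by the Logarithmic Law (Proposition~\ref{properties}(2)) and the relation $A_{S^n}(X)=A_S(X)^n$ from Theorem~\ref{fpscor}(2), together with the fact that the hypotheses of each case are preserved under taking powers (since a coefficient coprime to $m$ stays coprime to $m$ under multiplication, and $\deg(S^n)=n\deg(S)$ when $\supp^*(S)=\supp^*(S)$ — cf.\ Remark~\ref{New:Remark}(2)), it suffices to prove the upper bound $h_{alg}(S)\le\deg(S)\log m$ and the lower bound $h_{alg}(S)\ge\deg(S)\log m$ separately, possibly after replacing $S$ by a convenient power.

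For the \emph{lower bound}, I would use the finite subgroups $F_N:=P_{-N,N}=\bigoplus_{|i|\le N}(\Z_m)_i$, which form a cofinal family in $\Pf(\Z_m^{(\Z)})$ (Remark~\ref{cofinal}); since each is a subgroup, $\T_n(S,F_N)$ is again a subgroup and $|\T_n(S,F_N)|$ is a power of $m$, so I only need to count $\Z_m$-dimensions. Using the fps picture of Theorem~\ref{fpscor}, $\T_n(S,F_N)$ is the $R_m$-span generated by $X^{-N},\dots,X^N$ under $1,A_S(X),\dots,A_S(X)^{n-1}$. In the leftmost-permutive case $l<0$ with $\lambda_l$ a unit, the lowest-degree term of $A_S(X)^{j}$ sits at $X$-exponent $-jl>0$ with a unit coefficient, and these leading monomials fan out by $-l$ new positions per step; a straightforward interval count gives $\log|\T_n(S,F_N)|\ge (2N+1+ (n-1)(-l))\log m$, hence $H_{alg}(S,F_N)\ge -l\log m=\deg(S)\log m$. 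The rightmost case is symmetric (or follows via the conjugation ${}^\wedge$ of Lemma~\ref{wedgeconj}, which sends an $S$ with $r\in\supp^*$ to one with $l\in\supp^*$ and preserves $h_{alg}$ by Proposition~\ref{properties}(1)), and in the bipermutive case both ends contribute, giving $(r-l)\log m$.

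For the \emph{upper bound}, I would bound the growth of the support: if $S=S_{f[l,r]\lambda}$, then by Example~\ref{image(e_1)} one has $S^j(e_k)\in P_{k-jr,\,k-jl}$, so $\T_n(S,F_N)\subseteq P_{-N-(n-1)r,\,N-(n-1)l}$ (taking care of signs: what matters is that the support can grow by at most $\max\{0,-l\}$ on the left and at most $\max\{0,r\}$ on the right per application of $S$). Thus $|\T_n(S,F_N)|\le m^{2N+1+(n-1)(\max\{0,-l\}+\max\{0,r\})}$. In case (1), $r\le 0$ gives the bound $m^{2N+1+(n-1)(-l)}$ and hence $h_{alg}(S)\le -l\log m$; in case (2), $l\ge 0$ gives $\le r\log m$; in case (3) it gives $\le (r-l)\log m$. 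Combined with the matching lower bounds, and noting that under the stated hypotheses $\deg(S)$ equals $-l$, $r$, $r-l$ respectively by Lemma~\ref{permutivedeg}, this yields $h_{alg}(S)=\deg(S)\log m$ in all three cases.

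\textbf{Main obstacle.} The delicate point is the lower bound, specifically verifying that the leading (lowest- or highest-degree) monomials of $1,A_S(X),\dots,A_S(X)^{n-1}$, when combined with the block $X^{-N},\dots,X^N$, really produce $\Z_m$-linearly independent contributions filling an interval of length growing like $n(-l)$ (resp.\ $n\,r$, resp.\ $n(r-l)$) — i.e.\ that there is no unexpected cancellation modulo $m$. This is where permutivity (a \emph{unit} leading coefficient, so its powers stay units in $\Z_m$) is essential: it guarantees the extreme coefficient of $A_S(X)^j$ is a unit, so the corresponding basis vector is genuinely new. I would make this precise by a triangularity argument on the highest exponent present (or lowest, in case (1)), which is clean; the bookkeeping with the four sign cases of $l,r$ relative to $0$ is the only real tedium.
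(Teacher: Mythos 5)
Your proposal is correct and follows essentially the same route as the paper: a cofinal family of interval subgroups $P_{s,t}$, the support-growth bound from Example~\ref{image(e_1)} for the upper estimate, and a triangularity/induction argument on the extreme position (whose coefficient $\lambda_l^n$ or $\lambda_r^n$ is a unit by permutivity) for the lower estimate — the paper just packages both bounds as the single equality $\T_n(S,F)=P_{-h,k+n\ell}$ proved by induction. The only substantive differences are cosmetic: your reduction of case (2) to case (1) via the conjugation of Lemma~\ref{wedgeconj} is a legitimate shortcut the paper does not use, and note that in case (1) the unit monomial $\lambda_l^{j}X^{-jl}$ is the \emph{highest}-degree term of $A_S(X)^j$ (since $-l>0$), not the lowest.
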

\begin{proof} 
First of all, let us note that the last assertion follows from Lemma~\ref{permutivedeg}.

We recall the following notation introduced in \eqref{Pdef} that we use for the whole proof: for $s,t\in\Z$ with $s\leq t$, $$P_{s,t}=\langle e_{s},\ldots, e_t\rangle.$$

\noindent\textsc{Case $l\leq r\leq0$.} Let $\ell=|l|=-l$ and take $F=P_{-h,k+\ell}$ for fixed $h\in\N$ and $k\in\N_+$.

We prove by induction that, for every $n\in\N_+$,
\begin{equation}\label{Eqnnnm}
\T_n(S,F)= P_{-h,k + n\ell}.
\end{equation}
As a consequence, we get that $|\T_n(S,F)|=m^{h+k+n\ell +1}$, and hence
\begin{equation*}\label{aim1}
H_{alg}(S,F)=\lim_{n\to\infty}\frac{(h+k+n\ell +1)\log m}{n}=\ell\log m.
\end{equation*}
To conclude, in view of Remark~\ref{cofinal} and the fact that every finite subset of $\Z_m^{(\Z)}$ is contained in $P_{-h,\ell+k}$ for some $h\in\N$, $k\in\N_+$,
$$h_{alg}(S)=\sup\{H_{alg}(S,P_{-h,k+\ell}): h\in\N, k\in\N_+\}=\ell\log m.$$

The case $n=1$ in \eqref{Eqnnnm} is clear, as $\T_1(S,F)=F$. Now fix $n\in\N_+$ and assume that \eqref{Eqnnnm} holds.
Clearly, $\T_{n+1}(S,F) = \T_n(S,F) + S^n(F)$ and
\begin{equation*}
P_{-h,k+(n+1)\ell}=P_{-h,k+n\ell+\ell} = P_{-h,k+n\ell} + \langle e_{k+n\ell+1},\ldots, e_{k+n\ell+\ell}\rangle.
\end{equation*}
Therefore, from the inductive hypothesis \eqref{Eqnnnm} and the fact that $S^n(F)\subseteq P_{-h,k+n\ell+\ell}$, we deduce that 
\[
\T_{n+1}(S,F) \subseteq P_{-h,k + n\ell} + P_{-h,k+n\ell+\ell} = P_{-h,k+n\ell+\ell}=P_{-h,k+(n+1)\ell}.
\]
To check that the opposite inclusion holds true, it is enough to take into account that, due to \eqref{Eqe_1},
$$
{S^n(e_{k+1}) \in  \lambda_l^n e_{k+n\ell+1}+P_{-h,k+n\ell},}
$$
so, by the inductive hypothesis \eqref{Eqnnnm},
\[
{\lambda_l^n}e_{k+n\ell+1} \in S^n(e_{k+1}) + P_{-h,k+n\ell} = S^n(e_{k+1})+\T_n(S,F) \subseteq \T_{n+1}(S,F);
\]
since $\lambda_l^n$ is invertible in $\Z_m$, we get that
$$e_{k+n\ell+1}\in \T_{n+1}(S,F)\quad\text{and so that}\quad P_{-h,k+n\ell+1}\subseteq \T_{n+1}(S,F).$$
Analogously, by induction on $i\in\{1,\ldots,\ell\}$, one can prove that, for every $i\in\{1,\ldots,\ell\}$,
$$S^n(e_{k+i}) \in {\lambda_l^n}e_{k+n\ell+i} + P_{-h,k+n\ell}+\langle e_{k+n\ell+1},\ldots,e_{k+n\ell+(i-1)}\rangle={\lambda_l^n e_{k+n\ell+i} +P_{-h,k+n\ell+(i-1)}},$$
so
$$\lambda_l^n e_{k+n\ell+i} \in S^n(e_{k+i})  + P_{-h,k+n\ell+(i-1)} \subseteq \T_{n+1}(S,F);$$
exploiting the invertibility of $\lambda_l^n$  in $\Z_m$, we get that $P_{-h,k+n\ell+i}\subseteq \T_{n+1}(S,F).$
With $i=\ell$, this means that $P_{-h,k+n\ell+\ell}\subseteq \T_{n+1}(S,F)$, and hence the equality $\T_{n+1}(S,F)=P_{-h,k+(n+1)\ell}$. This ends up the inductive proof of \eqref{Eqnnnm}.

\medskip
\noindent \textsc{Case $0\leq l\leq r$.} {Proceed as in the above case, by using $F$ in the family $\{P_{-r-k,h}:h\in \N, k\in\N_+\}$, cofinal in $\P_{fin}(\Z_m^{(\Z)})$.}

\medskip
\noindent \textsc{Case $l<0< r$.} Let  $\ell=|l|=-l$ and $F=P_{-k-r,k+\ell}$ for a fixed $k\in\N$. We prove by induction that, for every $n\in\N_+$,
\begin{equation}\label{Eqnnnn}
\T_n(S,F)= P_{-k-nr,k+n\ell}.
\end{equation}
Then we get that $|\T_n(S,F)|=m^{2k+1+n(r+\ell)}$, and we can conclude that
$$H_{alg}(S,P_{-k-r,k+\ell})=\lim_{n\to\infty}\frac{(2k+1+n(r+\ell))\log m}{n}=(r+\ell)\log m.$$
In view of Remark~\ref{cofinal} and the fact that every finite subset of $\Z_m^{(\Z)}$ is contained in $P_{-k-r,k+\ell}$ for some $k\in\N_+$,  
$$h_{alg}(S)=\sup\{H_{alg}(S,P_{-k-r,k+\ell}):k\in\N_+\}=(r+\ell)\log m.$$

The case $n=1$ in \eqref{Eqnnnn} is clear, as $\T_1(S,F)=F$. Now fix $n\in\N_+$ and assume that \eqref{Eqnnnn} holds.
Clearly, $\T_{n+1}(S,F) = \T_n(S,F) + S^n(F)$ and
\begin{equation}\label{Rrnn}
P_{-k-(n+1)r,k+(n+1)\ell}= P_{-k-nr,k+n\ell}+\langle e_{k+n\ell+1},\ldots, e_{k+n\ell+\ell}\rangle+\langle e_{-k-nr-1},\ldots,e_{-k-nr-r}\rangle.
\end{equation}
Therefore, from the inductive hypothesis \eqref{Eqnnnn} and the inclusion $S^n(F)\subseteq P_{-k-nr-r,k+n\ell+\ell}$, 
we deduce that $\T_{n+1}(S,F) \subseteq P_{-k-nr-r,k+n\ell+\ell}=P_{-k-(n+1)r,k+(n+1)\ell}.$

\smallskip
To check that the opposite inclusion holds true, it is enough to take into account first that, due to \eqref{Eqe_1},
$$S^n(e_k) \in {\lambda_l^n}e_{k+\ell n+1} + P_{-k-nr,k+n\ell},$$
so, by the inductive hypothesis \eqref{Eqnnnn},
$${\lambda_l^n}e_{k+n\ell+1} \in S^n(e_k)  + P_{-k-nr,k+n\ell} = S^n(e_k)  +  \T_n(S,F) \subseteq \T_{n+1}(S,F);$$
since $\lambda_l^n$ is invertible in $\Z_m$, we get that 
\[e_{k+n\ell+1}\in \T_{n+1}(S,F)\quad\text{and so that}\quad P_{-k-nr,k+n\ell+1}\subseteq \T_{n+1}(S,F).\]
Analogously, by induction on $i\in\{1,\ldots,\ell\}$, one can prove that, for every $i\in\{1,\ldots,\ell\}$,
$$S^n(e_{k+i}) \in {\lambda_l^n}e_{k+n\ell+i} + P_{-k-nr,k+n\ell}+\langle e_{k+n\ell+1},\ldots,e_{k+n\ell+(i-1)}\rangle={P_{-k-nr,k+n\ell+(i-1)}},$$
so
$$\lambda_l^n e_{k+n\ell+i} \in S^n(e_{k+i})  + P_{-k-nr,k+n\ell+(i-1)} \subseteq \T_{n+1}(S,F);$$
exploiting the invertibility of $\lambda_l^n$  in $\Z_m$, we get that $P_{-k-nr,k+n\ell+i}\subseteq \T_{n+1}(S,F).$
With $i=\ell$, this means that
\begin{equation}\label{ell}
P_{-k-nr,k+n\ell+\ell}\subseteq \T_{n+1}(S,F).
\end{equation}

Now take into account that, due to \eqref{Eqe_1}, 
$$S^n(e_{-k}) \in {\lambda_r^n}e_{-k-nr-1} + P_{-k-nr,k+n\ell},$$  so, by the inductive hypothesis \eqref{Eqnnnn},
$$\lambda_r^ne_{-k-nr-1} \in S^n(e_{-k})  + P_{-k-nr,k+n\ell} = S^n(e_{-k})  +  \T_n(S,F) \subseteq \T_{n+1}(S,F);$$
{since $\lambda_r^n$ is invertible in $\Z_m$, we get that
$$e_{-k-nr-1}\in \T_{n+1}(S,F)\quad\text{and so that}\quad P_{-k-nr-1,k+n\ell}\subseteq \T_{n+1}(S,F).$$}
Analogously, by induction on $i\in\{1,\ldots,r\}$ and using  invertibility of $\lambda_l^n$  in $\Z_m$, we deduce as above that
%%%%%%%%%%%%%%%%%%%
%\NB \footnote{in order to shorten the proof below 2 pages: \ (we can also put it back, if you wish :-) 
%\\ for every $i\in\{1,\ldots,r\}$,
%$$
%S_\lambda^n(e_{-k-i}) \in {\lambda_r^n}e_{-k-nr-i} + P_{-k-nr,k+n\ell}+\langle e_{-k-nr-1},\ldots,e_{-k-nr-(i-1)}\rangle =P_{-k-nr-(i-1),k+n\ell},
%$$
%so
%$$
%\lambda_r^n e_{-k-nr-i} \in S_\lambda^n(e_{-k-i})  + P_{-k-nr-(i-1),k+n\ell} \subseteq \T_{n+1}(S_\lambda,F).
%$$
%As $\lambda_r^n$ is invertible, we similarly get $P_{-k-nr-i,k+n\ell}\subseteq \T_{n+1}(S_\lambda,F).$ With $i=\ell$, we conclude that}
%%%%%%%%%%%%%%%%%%%%%%
\begin{equation}\label{err}
P_{-k-nr-n,k+n\ell}\subseteq \T_{n+1}(S,F).
\end{equation}

Now \eqref{Rrnn}, \eqref{ell} and \eqref{err} entail $P_{-k-(n+1)r,k+(n+1)\ell}\subseteq \T_{n+1}(S,F)$. This establishes the equality $\T_{n+1}(S,F)=P_{-k-(n+1)r,k+(n+1)\ell}$ and ends up the inductive proof of \eqref{Eqnnnn}.
\end{proof}

We immediately obtain the following corollary for $\Z_p$ with $p$ prime, as in this case $\supp(\lambda)=\supp^*(\lambda)$.

\begin{corollary}
If $p$ is a prime, then $h_{alg}(S)=\deg(S)\log p$ for every $S\in \mathrm{CA}_f(\Z_p)$.
\end{corollary}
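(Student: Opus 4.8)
The plan is to derive this directly from Theorem~\ref{Th4}. The only thing that needs to be observed is that over the prime field $\Z_p$ every nonzero element is invertible, so for any $\lambda\in\Z_p^\Z$ we have $\supp^*(\lambda)=\{i\in\Z:(\lambda_i,p)=1\}=\{i\in\Z:\lambda_i\neq 0\}=\supp(\lambda)$. Hence the ``reduced'' hypotheses appearing in Theorem~\ref{Th4} (that the extreme indices of the memory set lie in $\supp^*(\lambda)$) are automatically fulfilled once we take those indices in $\supp(\lambda)$, which by Remark~\ref{remark:june:8}(2) we may always do.

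First I would dispose of the trivial case: if $S\equiv 0$ then $\deg(S)=0$ by Definition~\ref{degree}, and $h_{alg}(S)=0$ since all trajectories $\T_n(S,F)=F$ are constant in $n$ for a finite $F\ni 0$, or simply because $h_{alg}$ of the zero endomorphism vanishes; so the equality holds. Assume now $S=S_{f[l,r]\lambda}\not\equiv 0$. By Remark~\ref{remark:june:8}(2) we may assume $l,r\in\supp(\lambda)$, and by the observation above this gives $l,r\in\supp^*(\lambda)$. Exactly one of the three mutually exhaustive sign patterns $l\leq r\leq 0$, $0\leq l\leq r$, $l<0<r$ holds, and in each of them the corresponding item of Theorem~\ref{Th4} applies and yields $h_{alg}(S)=\deg(S)\log m=\deg(S)\log p$ (the identification of the numerical value with $\deg(S)\log m$ being precisely the last assertion of Theorem~\ref{Th4}, via Lemma~\ref{permutivedeg}).

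There is essentially no obstacle here; the whole content is the remark that passing to a field collapses $\supp^*$ onto $\supp$, after which Theorem~\ref{Th4} covers every case. If one wanted a proof entirely independent of the case split in Theorem~\ref{Th4}, an alternative would be to invoke Theorem~\ref{fpscor}(2) to write $A_S(X)=A_S^*(X)$ (again because $\supp=\supp^*$) and then run the single inductive trajectory computation from the proof of Theorem~\ref{Th4} in its most general ``$l<0<r$'' form, allowing $l=0$ or $r=0$ as degenerate endpoints; but this duplicates work already done, so the short route above is preferable.
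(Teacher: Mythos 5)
Your proof is correct and follows exactly the route the paper takes: the corollary is deduced from Theorem~\ref{Th4} by observing that for a prime $p$ one has $\supp^*(\lambda)=\supp(\lambda)$, so the permutivity hypotheses are automatic once $l,r\in\supp(\lambda)$. The only difference is that you spell out the trivial case $S\equiv 0$ and the exhaustiveness of the three sign patterns, which the paper leaves implicit.
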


\subsection{General case}\label{Sec2}

Modulo the Bridge Theorem~\ref{BT}, the following result is an equivalent statement of~\cite[Lemma~1]{DMM}. Let us recall (see Corollary~\ref{CAalg}) that each positive power $S_\lambda^{i}$ of the  finitary linear cellular automaton $S_{\lambda}\colon\Z_m^{(\Z)}\to\Z_m^{(\Z)}$ is a finitary linear cellular automaton itself, and so there is $\lambda'\in \Z_m^{(\Z)}$ such that $S_{\lambda}^{i} = S_{\lambda'}$.

\begin{proposition}\label{above}
Let $m=p^k$ with $p$ prime and $k\in\N_+$ and let $S_\lambda\colon \Z_m^{(\Z)}\to \Z_m^{(\Z)}$ be a finitary linear cellular automaton.
Assume that $\supp^*(\lambda)$ is non-empty and let $l':=\min \supp^*(\lambda)$ and $r':=\max \supp^*(\lambda)$. 
Then ${S_{\lambda'} }:=S_\lambda^{p^{k-1}}$ satisfies $\min\supp(\lambda') = p^{k-1}l'$ and $\max\supp(\lambda') = p^{k-1}r'$. In particular: 
\begin{itemize}
\item[(1)] if $l' < 0$, then $S_{\lambda'}$ is leftmost permutive; 
\item[(2)] if $r' > 0$, then $S_{\lambda'}$ is rightmost permutive;
\item[(3)] if $l' < 0 < r'$, then $S_{\lambda'}$ is bipermutive. 
\end{itemize}
\end{proposition}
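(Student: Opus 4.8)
The plan is to work with the reduced Laurent polynomial and Lemma~\ref{aboveabove}, exactly as in the proof of Proposition~\ref{previous}. Write $A_S(X)=A_S^*(X)+pA_1(X)$ for some $A_1(X)\in R_m$. By Theorem~\ref{fpscor}(2) one has $A_{S^{p^{k-1}}}(X)=A_S(X)^{p^{k-1}}$, and Lemma~\ref{aboveabove} gives
$$A_{S_{\lambda'}}(X)=A_S(X)^{p^{k-1}}=A_S^*(X)^{p^{k-1}}.$$
So the whole computation reduces to controlling $\min$ and $\max$ of the support of $A_S^*(X)^{p^{k-1}}$, and then translating back from the fps to $\lambda'$ via Remark~\ref{suppAsuppT}(2), which says $\supp(A_S(X))=-\supp(\lambda)$ (and likewise for $S_{\lambda'}$).

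First I would record that, by Remark~\ref{suppAsuppT}(2) applied to $S$ together with the definition of the reduced support/reduced Laurent polynomial, $\supp(A_S^*(X))=\supp^*(A_S(X))=-\supp^*(\lambda)$; hence $\min\supp(A_S^*(X))=-r'$ and $\max\supp(A_S^*(X))=-l'$. Next, the key elementary fact: for a Laurent polynomial $B(X)=\sum_i b_iX^i\in R_m$ all of whose \emph{extreme} coefficients are invertible in $\Z_m$ (here $\Z_m=\Z_{p^k}$, so ``invertible'' $=$ ``coprime to $p$'', which is exactly the defining property of the reduced support), the product $B(X)^N$ has $\min\supp(B(X)^N)=N\min\supp(B(X))$ and $\max\supp(B(X)^N)=N\max\supp(B(X))$ — because the lowest-degree coefficient of $B(X)^N$ is $b_{\min}^N$ and the highest is $b_{\max}^N$, and these are invertible, hence nonzero, in $\Z_{p^k}$. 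Applying this with $B(X)=A_S^*(X)$ and $N=p^{k-1}$ yields $\min\supp(A_{S_{\lambda'}}(X))=-p^{k-1}r'$ and $\max\supp(A_{S_{\lambda'}}(X))=-p^{k-1}l'$. Translating back through $\supp(A_{S_{\lambda'}}(X))=-\supp(\lambda')$ gives precisely $\min\supp(\lambda')=p^{k-1}l'$ and $\max\supp(\lambda')=p^{k-1}r'$.

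The three itemized conclusions are then immediate from Definition~\ref{left-permutive}: if $l'<0$ then $p^{k-1}l'<0$ is the leftmost index of $\lambda'$ and its coefficient $(\lambda^*_{l'})^{p^{k-1}}$ is coprime to $p$, so $S_{\lambda'}$ is leftmost permutive; symmetrically for $r'>0$ and rightmost permutivity; and (3) is the conjunction of (1) and (2). I do not expect any genuine obstacle here: the only mildly delicate point is being careful that ``coprime to $p$'' is preserved under taking $p^{k-1}$-th powers in $\Z_{p^k}$ and that the extreme coefficients of a product of Laurent polynomials are the products of the extreme coefficients (no cancellation can occur at the extremes), which is why the hypothesis $l',r'\in\supp^*(\lambda)$ — rather than merely $\supp(\lambda)$ — is exactly what is needed. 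One should also note the degenerate sub-cases ($l'=r'$, or $l'$ and $r'$ on the same side of $0$) cause no trouble since the argument on $\min$ and $\max$ of supports is uniform.
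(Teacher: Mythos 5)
Your proposal is correct and takes essentially the same approach as the paper's proof: both reduce to $A_{S_{\lambda'}}(X)=A_S^*(X)^{p^{k-1}}$ via Lemma~\ref{aboveabove} and Theorem~\ref{fpscor}(2), observe that the extreme coefficients of $A_S^*(X)$ are units of $\Z_{p^k}$ so their $p^{k-1}$-th powers are nonzero and no cancellation occurs at the extremes, and translate back through $\supp(A_{S_{\lambda'}}(X))=-\supp(\lambda')$ using Remark~\ref{suppAsuppT}(2).
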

\begin{proof}
Let $A(X)=A_{S_\lambda}(X)$. Then $A(X)^{p^{k-1}}=A_{S_{\lambda'}}(X)$, by Theorem~\ref{fpscor}. 
In particular, $\supp( A(X)^{p^{k-1}} ) = -\supp(\lambda')$ by Remark~\ref{suppAsuppT}(2), and then 
\begin{equation}\label{eq:04.02}
\min\supp(\lambda') = - \max\supp( A(X)^{p^{k-1}} )\quad \text{and}\quad \max\supp(\lambda') = - \min\supp(A(X)^{p^{k-1}}).
\end{equation}

Write $A(X)= A^*(X)+pA_1(X)$ for some $A_1(X)\in R_m$. The definition of $l'$ and the equality 
\begin{equation}\label{A2}
A^*(X) = A(X) - pA_1(X)= \lambda_{l'} X^{-l'} + \ldots
\end{equation}
show that the leading monomial in $A^*(X)$ is $\lambda_{l'} X^{-l'}$. 
From Lemma~\ref{aboveabove} and \eqref{A2}, we get that 
\[A (X)^{p^{k-1}} = A^*(X)^{p^{k-1}} =  (\lambda_{l'})^{p^{k-1}} X^{-p^{k-1}l'} + \ldots,\]
and so the leading monomial of $A(X)^{p^{k-1}}$ is $(\lambda_{l'})^{p^{k-1}} X^{-p^{k-1}l'}$ with $(\lambda_{l'})^{p^{k-1}}\neq0$, as 
%\NBA\footnote{era: $= X^{-p^{k-1}l'}$, since $(\lambda_{l'})^{p^{k-1}}=1$ in $\Z_m$.\\ Da controllare che tutto funzioni}, as 
$(\lambda_{l'}, p ) =1$. Then \eqref{eq:04.02} gives 
\[p^{k-1}l' = - \max\supp( A(X)^{p^{k-1}} ) = \min\supp(\lambda').\]

The same argument shows also that $\max\supp(\lambda') = p^{k-1}r'$. The last part follows from the definitions.
\end{proof}

\begin{proof}[\bf Proof of Theorem~\ref{Th5}] Let $m=p^k$ with $p$ prime and $k\in\N_+$, and let $S_\lambda\colon \Z_m^{(\Z)}\to \Z_m^{(\Z)}$ be a finitary linear cellular automaton.  We have to prove the following.
\begin{itemize}
\item[(1)] If $\supp^*(\lambda)=\emptyset$ (i.e., $p$ divides all coefficients $\lambda_i$), then $S_{\lambda}^k\equiv 0$, and so $h_{alg}(S_\lambda)=0$.
\item[(2)] If $\supp^*(\lambda)\ne \emptyset$, with $l':=\min \supp^*(\lambda)$ and $r':=\max \supp^*(\lambda)$, then 
\begin{equation}\label{Eq:Laaast}
h_{alg}(S_\lambda)=\begin{cases} -l'\log m & \text{if}\ l'\leq r'\leq0,\\
 r'\log m & \text{if}\ 0\leq l'\leq r',\\
 (r'-l')\log m & \text{if}\ l'<0<r'.
\end{cases}
\end{equation}
\end{itemize}

(1) By hypothesis, $A_S^*(X)\equiv 0$, and so there exists $A_1(X)\in R_m$ such that $A_S(X)=pA_1(X)$. By Theorem~\ref{fpscor}, $A_{S^k}(X)=A_S(X)^k=p^kA_1(X)^k\equiv 0$. Therefore, $S_\lambda^{{k}} \equiv 0$ and then $h_{alg}(S_\lambda^{{k}}) = 0$. By the Logarithmic Law (Proposition~\ref{properties}(2)), $h_{alg}(S_\lambda^{{k}}) = {k} h_{alg}(S_\lambda)$, hence $h_{alg}(S_\lambda) = 0$.

(2) Consider the case $l'\leq r'\leq0$. Then, by Proposition~\ref{above}, with $S_{\lambda'}:=S_\lambda^{p^{k-1}}$ we have $p^{k-1}l'=\min\supp (\lambda')$.  
Therefore, Theorem~\ref{Th4} gives that 
$$h_{alg}(S_{\lambda'}) = -p^{k-1}l' \log m,$$
and by the Logarithmic Law (Proposition~\ref{properties}(2)),
$$h_{alg}(S_{\lambda'})= p^{k-1} h_{alg}(S_\lambda);$$ 
hence, $h_{alg}( S_{\lambda} ) = -l' \log m$.

The other two cases can be treated analogously.
\end{proof}

\begin{remark}\label{shift*}
In the above theorem $h_{alg}(S_\lambda)$ depends only on $\supp^*(\lambda)$, not on $\supp(\lambda)$; moreover, the specific values of the coefficients $\lambda_i$, for $i\in \supp^*(\lambda)$, are irrelevant, so they can be taken to be $1$. Hence, as far as only the value of $h_{alg}(S_\lambda)$ is concerned, 
we can assume that $\supp(\lambda)=\supp^*(\lambda)$ and that $\lambda_i\in\{0,1\}$ for every $i\in\Z$.
\end{remark}

Next we prove our most general result, that computes the algebraic entropy of any finitary linear cellular automaton on $\Z_m$. 

\begin{proof}[\bf Proof of Theorem~\ref{halgFClCA}] 
Let $S^{(1)}\times\ldots\times S^{(h)}$ be the primary factorization of $S$. 
The Invariance under conjugation (Proposition~\ref{properties}(1)) and the weak Addition Theorem (Proposition~\ref{properties}(4)) give
\[
h_{alg}(S)=h_{alg}(S^{(1)}\times\ldots\times S^{(h)})=\sum_{i=1}^h h_{alg}(S^{(i)}).
\]
To conclude, for every $i\in\{1,\ldots,h\}$, apply Theorem~\ref{Th5} to $S^{(i)}$ to obtain that $h_{alg}(S^{(i)})=\deg(S^{(i)})\log p_i^{k_i}$.
\end{proof}

As a direct consequence of Theorem~\ref{dualCAintro}, we find the value of the topological entropy of linear cellular automata, that is, \cite[Theorem 2]{DMM}. 

\begin{corollary} \label{htopFClCA} 
Let $T=T_\lambda\colon \Z_m^{\Z}\to \Z_m^{\Z}$ be a linear cellular automaton with primary factorization $T=T^{(1)}\times\ldots\times T^{(h)}$. 
Then $h_{top}(T)=\sum_{i=1}^h \deg(T^{(i)})\log p_i^{k_i}$.
\end{corollary}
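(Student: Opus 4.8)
The plan is to reduce the assertion to the entropy computation already obtained for \emph{finitary} linear cellular automata, using Pontryagin duality. First I would invoke the Bridge Theorem~\ref{BT} to write $h_{top}(T_\lambda)=h_{alg}(\widehat{T_\lambda})$, and then Theorem~\ref{dualCAintro} to identify the dual, $\widehat{T_\lambda}=S_{\lambda^\wedge}$; this already gives $h_{top}(T_\lambda)=h_{alg}(S_{\lambda^\wedge})$.

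Next, to dispose of the reversed coefficient vector $\lambda^\wedge$, I would appeal to Lemma~\ref{wedgeconj}: the finitary linear cellular automata $S_{\lambda^\wedge}$ and $S_\lambda$ are conjugated by the group automorphism ${}^\wedge\colon L\to L$, so by invariance of the algebraic entropy under conjugation (Proposition~\ref{properties}(1)) we get $h_{alg}(S_{\lambda^\wedge})=h_{alg}(S_\lambda)$, where $S_\lambda=T_\lambda\restriction_{\Z_m^{(\Z)}}$. Now Theorem~\ref{halgFClCA} applies directly to $S_\lambda$ and yields $h_{alg}(S_\lambda)=\sum_{i=1}^h\deg(S_\lambda^{(i)})\log p_i^{k_i}$, where $S_\lambda^{(i)}=S_\lambda\restriction_{L_i}$ are the primary components of $S_\lambda$.

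It then only remains to match $\deg(S_\lambda^{(i)})$ with $\deg(T^{(i)})$. By the description of the primary components in \S\ref{primaryfactorization}, both $S_\lambda^{(i)}$ and $T^{(i)}$ are the (finitary, respectively classical) linear cellular automaton attached to the same projected coefficient vector $\bar\lambda^{(i)}=\pi_i^\Z(\lambda)\in\Z_{p_i^{k_i}}^\Z$, with the same memory set $\{l,\ldots,r\}$ and local rule; since by Definition~\ref{degree} the degree depends only on $\supp^*(\bar\lambda^{(i)})$, which is literally the same set in both cases, we conclude $\deg(S_\lambda^{(i)})=\deg(T^{(i)})$ for every $i$. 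Substituting into the previous formula gives $h_{top}(T)=\sum_{i=1}^h\deg(T^{(i)})\log p_i^{k_i}$.

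Since all the substantive content — the identification of the dual and the entropy formula — is carried by Theorems~\ref{dualCAintro} and~\ref{halgFClCA}, I do not expect a genuine obstacle here: the only point demanding a little care is the purely formal check that passing to the Pontryagin dual and then to a primary component is compatible with the coefficientwise reversal ${}^\wedge$ (equivalently, that $\pi_i^\Z(\lambda^\wedge)=(\pi_i^\Z(\lambda))^\wedge$), which is precisely what Lemma~\ref{wedgeconj} is tailored to absorb.
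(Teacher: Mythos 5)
Your proposal is correct and follows essentially the same route as the paper's own proof: Bridge Theorem plus Theorem~\ref{dualCAintro} to get $h_{top}(T_\lambda)=h_{alg}(S_{\lambda^\wedge})$, then Lemma~\ref{wedgeconj} with invariance under conjugation to pass to $S_\lambda$, and finally Theorem~\ref{halgFClCA}. Your extra paragraph matching $\deg(S_\lambda^{(i)})$ with $\deg(T^{(i)})$ only makes explicit a step the paper leaves implicit, and it is verified correctly.
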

\begin{proof}
By the Bridge Theorem~\ref{BT} and Theorem~\ref{dualCAintro}, $h_{top}(T_\lambda)=h_{alg}(\widehat{T_\lambda})=h_{alg}(S_{\lambda^\wedge})$. By Lemma~\ref{wedgeconj}, $S_{\lambda^\wedge}$ and $S_\lambda$ are conjugated, so $h_{alg}(S_{\lambda^\wedge})=h_{alg}(S_\lambda)$ by the Invariance under conjugation (Proposition~\ref{properties}(1)). As we get that $h_{top}(T_\lambda)=h_{alg}(S_\lambda)$, it suffices to apply Theorem~\ref{halgFClCA}.
\end{proof}

\subsection{Final remarks and open questions}\label{finalsec}

The classical Kaplansky paradigm connects endomorphisms of an abelian group $L$ and $\Z[X]$-module structures of $L$ by assigning to $\phi\in\mathrm{End}(L)$ the $\Z[X]$-module structure of $L$ defined by $X\cdot a:= \phi(a)$, for $a\in L$; in the opposite direction, every polynomial $P(X)\in \Z[X]$ gives rise to $\phi\in\mathrm{End}(L)$ by setting $\phi(a):=P(X)\cdot a$ for $a\in L$. Of course, this paradigm can be specified for the case when $\phi\in \mathrm{Aut}(L)$, in which case one obtains a $\Z[X,X^{-1}]$-module structure on $L$ by putting additionally $X^{-1}\cdot a:= \phi^{-1}(a)$ for every $a\in L$, and extending this action by linearity on all Laurent polynomials $A(X)\in \Z[X,X^{-1}]$. 

In case $L= \Z_m^{(\Z)}$ and $\phi=\sigma\colon L\to L$, the situation considered in this paper is exactly the classical Kaplansky paradigm described above. Moreover, as pointed out in Remark~\ref{New:Remark}(1), $L$ carries a natural $R_m$-module structure, and according to Theorem~\ref{fpscor} a Laurent polynomial $A(X)\in R_m$ acts on $L$ as the finitary linear cellular automaton $S\colon L\to L$ with $A_S(X) = A(X)$, that is, 
$$S\colon a \mapsto A(X)\cdot a\ \text{for every}\ a\in L.$$
In Theorem~\ref{halgFClCA} we computed the algebraic entropy of this endomorphism (see Question \ref{Ques:Kapla} and the discussion there related to this issue).

\subsubsection{Finitary linear cellular automata with the same algebraic entropy of a power of the shift}\label{astheshift}

Since for any finitary linear cellular automaton $S=S_{f[l,r]\lambda}\colon \Z_m^{(\Z)}\to \Z_m^{(\Z)}$,
\[S = \sum_{i=l}^{r} \lambda_i \sigma^{-i} = A(\sigma)\]
by Remark~\ref{New:Remark}(3), it is natural to study the relation between $h_{alg}(S)$ and, with $n\in\Z$,  $$h_{alg}(\sigma^n)=|n|\log m=\log(m^{|n|}).$$

\smallskip
In the case when $m=p^k$ for a prime $p$ and $k\in\N_+$, a closer look at Theorem~\ref{Th5} shows that, to the effect of the values of the algebraic entropy, the shifts are dominant in the following sense. 

\begin{proposition}\label{allshift} 
Let $m=p^k$, where $p$ is a prime and $k\in\N_+$, and let $S=S_{\lambda}\colon \Z_m^{(\Z)}\to\Z_m^{(\Z)}$ be a finitary linear cellular automaton. Then $h_{alg}(S)=h_{alg}(\sigma^{\deg(S)})$.
\end{proposition}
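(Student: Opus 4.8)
The plan is to read both sides of the claimed equality off results already proved, since the proposition is essentially a repackaging of Theorem~\ref{Th5}. First I would apply Theorem~\ref{Th5} to $S$ itself: since $m=p^k$, it gives at once $h_{alg}(S)=\deg(S)\log p^k=\deg(S)\log m$. Thus the whole statement reduces to checking that the right-hand side $h_{alg}(\sigma^{\deg(S)})$ equals $\deg(S)\log m$ as well.

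For that, I would first record the value $h_{alg}(\sigma)=\log m$ for the right shift. This is an instance of Theorem~\ref{Th4}(1): writing $\sigma=\sigma_m=S_{f[l,r]\lambda}$ with $l=r=-1$ and $\lambda_{-1}=1$, one has $l\le r\le 0$ and $l\in\supp^*(\lambda)$ (as $(\lambda_{-1},m)=1$), so $h_{alg}(\sigma)=-l\log m=\log m$; equivalently, $\deg(\sigma)=1$ and Theorem~\ref{Th5} applies directly. Since $\sigma$ is a group automorphism of $\Z_m^{(\Z)}$, the Logarithmic Law in the form of Proposition~\ref{properties}(2) gives $h_{alg}(\sigma^{n})=|n|\,h_{alg}(\sigma)=|n|\log m$ for every $n\in\Z$. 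Taking $n=\deg(S)$, which is nonnegative by Definition~\ref{degree}, yields $h_{alg}(\sigma^{\deg(S)})=\deg(S)\log m$. Comparing with the computation of the previous paragraph gives $h_{alg}(S)=h_{alg}(\sigma^{\deg(S)})$, as required.

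I do not anticipate any genuine obstacle: the only points requiring a moment's care are that $\deg(S)\ge 0$ (so that $|\deg(S)|=\deg(S)$ and the two formulas match without a sign ambiguity) and that the degenerate case $\deg(S)=0$, where $\sigma^{0}=id$ and both entropies vanish, is still covered — it is, by the automorphism clause of Proposition~\ref{properties}(2) with $k=0$, or simply because $h_{alg}(id)=0=0\cdot\log m$.
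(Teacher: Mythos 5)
Your proposal is correct and amounts to essentially the same argument as the paper's: both sides are read off from the entropy formulas already established, since $h_{alg}(S)=\deg(S)\log m$ and $h_{alg}(\sigma^{\deg(S)})=\deg(S)\log m$. The only cosmetic difference is that you invoke Theorem~\ref{Th5} directly together with the Logarithmic Law, whereas the paper first normalizes $\lambda$ via Remark~\ref{shift*} so as to place $S$ under the hypotheses of Theorem~\ref{Th4}; the underlying computation is identical, and your handling of the sign and of the degenerate case $\deg(S)=0$ is sound.
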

\begin{proof}
In view of Remark~\ref{shift*}, assume that $\supp(\lambda)=\supp^*(\lambda)$ and $\lambda_i\in\{0,1\}$ for every $i\in\Z$. 
Therefore, $S$ satisfies the hypotheses of Theorem~\ref{Th4}, which guarantees that $h_{alg}(S)=h_{alg}(\sigma^{\deg(S)})$.
\end{proof}

\begin{remark}
The same conclusion of the above proposition can be achieved when $S$ is bijective, without making recurse to the full power of Theorem~\ref{Th4}. Indeed, under this additional assumption, $S^{ p^{k-1}(p-1) } = \sigma^{ \pm \deg(S) p^{k-1}(p-1)}$ by Proposition~\ref{previous}. Then the Logarithmic Law (Proposition~\ref{properties}(2)) gives
\[
p^{k-1} (p-1) h_{alg} ( S ) = h_{alg} ( S^{ p^{k-1} (p-1) } ) = h_{alg} ( \sigma^{ \pm \deg(S) p^{k-1} (p-1) } ) = p^{k-1} (p-1) h_{alg} ( \sigma^{ \deg(S) } ),
\]
and so $h_{alg}( S ) = h_{alg} ( \sigma^{ \deg(S) } )$.
\end{remark}

In particular, when $m$ is a prime power, $n\in\N$, and $\lambda_n$ is coprime with $m$, it turns out that 
$$h_{alg}(\lambda_0 id_G + \lambda_1\sigma + \ldots + \lambda_n \sigma^n)=h_{alg}(\lambda_n\sigma^n) =h_{alg}(\sigma^n),$$
 which somehow corresponds to the intuitive understanding that the asymptotic behaviour of a polynomial is the same as that of its leading monomial. It is not clear whether this property of the pair $L= \Z_m^{(\Z)}$ and its automorphism $\sigma\colon L\to L$ can be extended to the general situation described in the beginning of this subsection. Namely:

\begin{question}\label{Ques:Kapla}
If $L$ is a (torsion) abelian group and $\phi\colon L\to L$ an endomorphism, can the algebraic entropy $h_{alg}(P(\phi))$, where $P(X) \in \Z[X]$, be estimated in terms of $h_{alg}(\phi)$ and some invariants of $P(X)$ (e.g., degree, etc.)? 
\end{question}

Some simple known instances of the above situation are the monomials $P(X) = X^n$; now the Logarithmic Law (Proposition~\ref{properties}(2)) gives $h_{alg}(\phi^n) = nh_{alg}(\phi)$.

\smallskip
As an immediate consequence of Theorem~\ref{Th4}, the conclusion of Proposition~\ref{allshift} holds for an arbitrary $m$, when $S\in \mathrm{CA}_f(\Z_m)$ is bipermutive:

\begin{corollary}
Let $S =S_{f[l,r]\lambda}\in \mathrm{CA}_f(\Z_m)$ with $l,r\in\supp(\lambda)$.  If
\begin{itemize}
  \item[(1)]  $l\leq r\leq0$ and $l\in\supp^*(\lambda)$, or
  \item[(2)] $0\leq l\leq r$ and $r\in\supp^*(\lambda)$, or
  \item[(3)] $l<0<r$ and $l,r\in\supp^*(\lambda)$ (i.e., $S$ is bipermutive), 
\end{itemize}
then $h_{alg}(S)=h_{alg}(\sigma^{\deg(S)})$.
\end{corollary}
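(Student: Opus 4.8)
The plan is to reduce the three cases directly to Theorem~\ref{Th4}. In each of the three situations listed, the hypotheses are precisely those of the corresponding item of Theorem~\ref{Th4} (with $l,r\in\supp(\lambda)$ and the appropriate endpoint in $\supp^*(\lambda)$), so Theorem~\ref{Th4} applies and yields $h_{alg}(S)=\deg(S)\log m$ in all three cases — indeed, the last sentence of Theorem~\ref{Th4} already states that the common conclusion is $h_{alg}(S)=\deg(S)\log m$, via Lemma~\ref{permutivedeg}. On the other side of the desired equality, the right shift $\sigma=\sigma_m$ is a group automorphism of $\Z_m^{(\Z)}$, so by the Logarithmic Law (Proposition~\ref{properties}(2)) we have $h_{alg}(\sigma^n)=|n|\,h_{alg}(\sigma)$ for every $n\in\Z$, and $h_{alg}(\sigma)=\log m$ (this is the case $l=r=-1$, $\lambda_{-1}=1$ of Theorem~\ref{Th4}(1), which gives $h_{alg}(\sigma)=-(-1)\log m=\log m$). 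Hence $h_{alg}(\sigma^{\deg(S)})=\deg(S)\log m$, since $\deg(S)\geq 0$.

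Putting the two computations together, $h_{alg}(S)=\deg(S)\log m=h_{alg}(\sigma^{\deg(S)})$, which is the claim. Concretely I would write: in case (1), Theorem~\ref{Th4}(1) gives $h_{alg}(S)=-l\log m$ and Lemma~\ref{permutivedeg}(1) gives $\deg(S)=-l$; in case (2), Theorem~\ref{Th4}(2) gives $h_{alg}(S)=r\log m$ and Lemma~\ref{permutivedeg}(2) gives $\deg(S)=r$; in case (3), Theorem~\ref{Th4}(3) gives $h_{alg}(S)=(r-l)\log m$ and Lemma~\ref{permutivedeg}(3) gives $\deg(S)=r-l$. In every case $h_{alg}(S)=\deg(S)\log m$, and comparing with $h_{alg}(\sigma^{\deg(S)})=\deg(S)\log m$ finishes the proof.

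There is essentially no obstacle here: the corollary is a packaging of Theorem~\ref{Th4} together with the elementary value $h_{alg}(\sigma^n)=|n|\log m$. The only point requiring a line of care is that one must invoke the automorphism form of the Logarithmic Law (valid since $\sigma$ is bijective) so that the identity holds for the nonnegative integer $\deg(S)$ without any sign ambiguity — but since $\deg(S)\ge 0$ even the endomorphism form $h_{alg}(\phi^k)=k\,h_{alg}(\phi)$ for $k\in\N_+$ suffices, together with the trivial case $\deg(S)=0$ where both sides vanish.
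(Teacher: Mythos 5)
Your proposal is correct and follows exactly the route the paper intends: the corollary is stated there as an immediate consequence of Theorem~\ref{Th4}, whose final assertion already gives $h_{alg}(S)=\deg(S)\log m$ in all three cases, and comparing with $h_{alg}(\sigma^{\deg(S)})=\deg(S)\log m$ finishes the argument. Your added care about the Logarithmic Law and the sign of $\deg(S)$ is sound but not a point of divergence.
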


In the case when $m$ is a prime power, every finitary linear cellular automaton $S$ satisfies $h_{alg}(S)=h_{alg}(\sigma^{\deg(S)})$ by Proposition~\ref{allshift} (and there are non-permutive cellular automata in this case), so the implications in the above corollary cannot be inverted. 

\smallskip
The property in Proposition~\ref{allshift} cannot be extended to the case when $m$ is not a prime power. %general case. In fact, one can easily find 
Now we show a finitary linear cellular automaton on $\Z_m$ that has algebraic entropy different from the algebraic entropy of any power of the right shift $\sigma$.

\begin{example}\label{item1} 
Take the finitary linear cellular automaton $S=S_{f[-1,0]\lambda}\colon\Z_6^{(\Z)}\to \Z_6^{(\Z)}$ with $\lambda_{-1}=4$ and $\lambda_0=3$. Then, in the notation of Theorem~\ref{halgFClCA}, $p_1=2$, $p_2=3$, so we have 
$$A_S(X)=3+4X \in R_6,\quad A_{ S^{(1)} }(X) = 1\in R_2,\quad A_{ S^{(2)} }(X) = X \in R_3.$$ 
Then $\deg(S^{(1)})=0$ and $\deg(S^{(2)})=1$. 
Hence, by Theorem~\ref{halgFClCA}, $h_{alg}(S)=\log3\neq \log 6^{|n|} = h_{alg}(\sigma^{|n|})$ for every $n\in\Z$.
\end{example}

It is then natural to ask which $S\in\mathrm{CA}_f(\Z_m)$ have the same algebraic entropy as a power of $\sigma$, that is, those $S$ with $h_{alg}(S)=\log(m^{|n|})$ for some $n\in\Z$: 

\begin{proposition}\label{item2}
Let $S\colon \Z_m^{(\Z)}\to \Z_m^{(\Z)}$ be a finitary linear cellular automaton and $S^{(1)}\times\ldots\times S^{(h)}$ be the primary factorization of $S$. 
For $n\in\Z$, $h_{alg}(S)=h_{alg}(\sigma^n)$ if and only if $\deg(S^{(i)})=|n|$ for every $i\in\{1,\ldots,h\}$.
\end{proposition}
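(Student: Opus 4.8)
The plan is to reduce everything to Theorem~\ref{halgFClCA} together with the additivity of the degree contributions. By Theorem~\ref{halgFClCA}, we have
$$h_{alg}(S)=\sum_{i=1}^h \deg(S^{(i)})\log p_i^{k_i},$$
so the statement amounts to characterizing when this sum equals $h_{alg}(\sigma^n)=|n|\log m=\sum_{i=1}^h |n|\log p_i^{k_i}$. The equality of these two sums is the crux: one direction is trivial, since if $\deg(S^{(i)})=|n|$ for all $i$ then term-by-term the two sums agree, giving $h_{alg}(S)=h_{alg}(\sigma^n)$.

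For the converse, I would argue that the $\log p_i^{k_i}$ are ``linearly independent enough'' over $\N$ to force term-by-term equality. Precisely, suppose $\sum_{i=1}^h d_i \log p_i^{k_i}=\sum_{i=1}^h |n|\log p_i^{k_i}$ with $d_i=\deg(S^{(i)})\in\N$. Rearranging, $\sum_{i=1}^h (d_i-|n|)\log p_i^{k_i}=0$, i.e.\ $\sum_{i=1}^h (d_i-|n|) k_i \log p_i=0$. Exponentiating, $\prod_{i=1}^h p_i^{(d_i-|n|)k_i}=1$ as a positive rational number; since the $p_i$ are distinct primes, unique factorization forces $(d_i-|n|)k_i=0$ for every $i$, and as $k_i\geq 1$ this gives $d_i=|n|$ for every $i$. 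This is the main (and only real) obstacle, and it is mild: it is just the $\Q$-linear independence of $\{\log p : p\text{ prime}\}$, or equivalently unique factorization in $\Z$.

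Thus the proof runs: invoke Theorem~\ref{halgFClCA} to express $h_{alg}(S)$ as the weighted sum of the degrees of the primary components; observe $h_{alg}(\sigma^n)=\sum_i |n|\log p_i^{k_i}$ directly from Proposition~\ref{properties}(2) (and $h_{alg}(\sigma)=\log m$, which follows from Theorem~\ref{Th4} applied to $\sigma=\sigma_m$, a rightmost permutive automaton of degree~$1$); then compare the two expressions and conclude via unique factorization as above. No delicate estimates are needed—the entire content is the numerical comparison, and the ``hard part'' is merely spelling out why a vanishing integer combination of $\log p_i^{k_i}$ has all coefficients zero.
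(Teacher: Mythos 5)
Your proposal is correct and follows essentially the same route as the paper: both invoke Theorem~\ref{halgFClCA} to write $h_{alg}(S)=\log\bigl(\prod_{i=1}^h p_i^{d_ik_i}\bigr)$, compare with $h_{alg}(\sigma^n)=\log(m^{|n|})$, and conclude via unique factorization that the exponents must agree term by term. The only cosmetic difference is that the paper exponentiates and compares the two integers $\prod_i m_i^{d_i}$ and $m^{|n|}$ directly, whereas you first subtract and then exponentiate; the underlying fact (distinctness of the primes $p_i$) is identical.
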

\begin{proof}
Let $m=p_1^{k_1}\cdots p_h^{k_h}$ be the prime factorization of $m$.
For $i\in\{1,\ldots,h\}$, let $m_i=p_i^{k_i}$ and $d_i=\deg(S^{(i)})$. By Theorem~\ref{halgFClCA}, $$h_{alg}(S)=\sum_{i=1}^h d_i\log m_i=\sum_{i=1}^h \log(m_i^{d_i})=\log\left(\prod_{i=1}^h m_i^{d_i}\right),$$ while, for a fixed $n\in\Z$,
$$h_{alg}(\sigma^n)=\log(m^{|n|}).$$
Then $h_{alg}(S)=h_{alg}(\sigma^{|n|})$ precisely when $\prod_{i=1}^h m_i^{d_i}=m^{|n|}$, that is, for every $i\in\{1,\ldots,h\}$, $d_i=|n|$.
\end{proof}

In the next example we see, among others, that $|n|$ in Proposition~\ref{item2} need not be $\deg(S)$.

\begin{example}\label{non-injex}
Take again $m=6$, and consider the finitary linear cellular automaton $S=S_{f[-1,1]\lambda}\colon \Z_6^{(\Z)}\to \Z_6^{(\Z)}$ with $\lambda_{-1}=4$, $\lambda_0=0$, $\lambda_1=3$. In the notation of Theorem~\ref{halgFClCA}, $p_1=2$ and $p_2=3$, and we get 
\[A_S(X)=3X^{-1}+4X \in R_6,\quad A_{ S^{(1)} }(X) = X^{-1}\in R_2,\quad A_{ S^{(2)} }(X) = X \in R_3, \]
and
$$\deg(S)=0,\quad \deg(S^{(1)})=1,\quad \deg(S^{(2)})=1.$$ 
Then:
\begin{itemize}
\item[(1)] $S$ is neither leftmost nor rightmost permutive, but $S$ is bijective by Theorem~\ref{injbij};
\item[(2)] $S$ has the same algebraic entropy of $\sigma_6$ as $h_{alg}(S) = \log2 + \log3 = \log 6 = h_{alg}(\sigma_6)$ by Theorem~\ref{halgFClCA};
\item[(3)] on the other hand, $h_{alg}(\sigma_6^{\deg(S)}) = h_{alg}(\sigma_6^0) =0 \neq h_{alg}(S)$, so $|n|$ in Proposition~\ref{item2} need not be $\deg(S)$;
\item[(4)] $\supp^*(\lambda)$ is empty, so $\deg(S)=0$, but $h_{alg}(S) \neq 0$ (and in particular $S$ is not nilpotent), so this example also shows that Theorem~\ref{Th5} does not hold when $m$ is not a prime power.
\end{itemize}
\end{example}

\begin{remark} 
For $S\in\mathrm{CA}_f(\Z_m)$, having the same algebraic entropy of a power of $\sigma$ is independent on the bijectivity of $S$.
Indeed, in one direction $S$ in Example~\ref{item1} is bijective in view of Theorem~\ref{injbij}, and it has algebraic entropy different from the algebraic entropy of any power of $\sigma$, according to Proposition~\ref{item2}, as $\deg(S^{(1)}) \neq \deg(S^{(2)})$.

On the other hand, it is easy to find non-bijective finitary linear cellular automata that have the same algebraic entropy of a power of the right shift; indeed, let $T\colon \Z_3^{\Z} \to \Z_3^{\Z}$ be the linear cellular automaton from Example~\ref{Exa:InjSurj}, and consider $S = T \restriction_{ \Z_3^{(\Z)} }\colon \Z_3^{(\Z)} \to \Z_3^{(\Z)}$. Then $S$ is not bijective by Proposition~\ref{previous}, while $h_{alg}(S)=h_{alg}(\sigma^{\deg(S)})$ by Proposition~\ref{allshift}.
\end{remark}

Let $S\colon \Z_m^{(\Z)}\to \Z_m^{(\Z)}$ be a finitary linear cellular automaton, $m=p_1^{k_1}\cdots p_h^{k_h}$ be the prime factorization of $m$ and $S^{(1)}\times\ldots\times S^{(h)}$ be the primary factorization of $S$. For every $i\in\{1,\ldots,h\}$, let $m_i=p_i^{k_i}$. As a consequence of Proposition~\ref{item2} we find that even in case every $S^{(i)}$ is a power of $\sigma_{m_i}$, $S$ need not be a power of $\sigma_m$ as it can have algebraic entropy different from $h_{alg}(\sigma_m^n)$ for every $n\in\Z$. Indeed, in Example~\ref{item1} we have $S^{(1)}=id=\sigma_2^{0}$  
and $S^{(2)}=\sigma_3$, while $S$ is not a power of $\sigma_6$, according to Proposition~\ref{item2}, as $\deg(S^{(1)}) \neq \deg(S^{(2)})$.

What one can say in the general case when $m$ is not a power of a prime is the following direct consequence of Proposition~\ref{allshift} and the weak Addition Theorem (Proposition~\ref{properties}(4)).

\begin{corollary}\label{Coro:Last}
Let $S\colon \Z_m^{(\Z)}\to \Z_m^{(\Z)}$ be a finitary linear cellular automaton, $m=p_1^{k_1}\cdots p_h^{k_h}$ be the prime factorization of $m$, and $S^{(1)}\times\ldots\times S^{(h)}$ be the primary factorization of $S$. For every $i\in\{1,\ldots,h\}$, let $m_i=p_i^{k_i}$, and $s_i=\deg(S^{(i)})$. Then
\[h_{alg}(S)=\sum_{i=1}^hh_{alg}\bigl(\sigma_{m_i}^{s_i}\bigr)=h_{alg}\left( \sigma_{m_1}^{s_1} \times \ldots \times \sigma_{m_h}^{s_h} \right).\]
\end{corollary}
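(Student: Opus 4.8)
The plan is to reduce everything to the two results that have already done the heavy lifting — the computation of $h_{alg}(S)$ in Theorem~\ref{halgFClCA} and the ``dominance of the shift'' statement for prime powers in Proposition~\ref{allshift} — glued together by the weak Addition Theorem (Proposition~\ref{properties}(4)). In particular, nothing new needs to be proved here: the corollary is pure bookkeeping.

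First I would establish the first equality. Two essentially equivalent routes are available. The direct route: Theorem~\ref{halgFClCA} gives $h_{alg}(S)=\sum_{i=1}^h \deg(S^{(i)})\log p_i^{k_i}=\sum_{i=1}^h s_i\log m_i$; on the other hand, since $s_i=\deg(S^{(i)})\geq 0$, the Logarithmic Law (Proposition~\ref{properties}(2)) yields $h_{alg}(\sigma_{m_i}^{s_i})=|s_i|\log m_i=s_i\log m_i$, and comparing the two expressions termwise gives $h_{alg}(S)=\sum_{i=1}^h h_{alg}(\sigma_{m_i}^{s_i})$. The more conceptual route: apply Proposition~\ref{allshift} to each primary component $S^{(i)}\colon L_i\to L_i$ — which is a finitary linear cellular automaton on $\Z_{m_i}$ with $m_i$ a prime power, hence falls under the hypothesis of that proposition — to obtain $h_{alg}(S^{(i)})=h_{alg}(\sigma_{m_i}^{\deg(S^{(i)})})=h_{alg}(\sigma_{m_i}^{s_i})$, and then combine this with the identity $h_{alg}(S)=\sum_{i=1}^h h_{alg}(S^{(i)})$; the latter is exactly the identity used in the proof of Theorem~\ref{halgFClCA}, obtained from Invariance under conjugation (Proposition~\ref{properties}(1)) applied to $S\cong S^{(1)}\times\ldots\times S^{(h)}$ together with the weak Addition Theorem.

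For the second equality I would simply invoke the weak Addition Theorem (Proposition~\ref{properties}(4)), which gives $h_{alg}(\phi_1\times\phi_2)=h_{alg}(\phi_1)+h_{alg}(\phi_2)$; a one-line induction on $h$ then yields $h_{alg}\bigl(\sigma_{m_1}^{s_1}\times\ldots\times\sigma_{m_h}^{s_h}\bigr)=\sum_{i=1}^h h_{alg}\bigl(\sigma_{m_i}^{s_i}\bigr)$, which is precisely the middle expression.

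There is no real obstacle here: the only points requiring a moment's care are the harmless sign issue $|s_i|=s_i$, which holds because $\deg(S^{(i)})\geq 0$ by Definition~\ref{degree}, and the observation that each primary component $S^{(i)}$ indeed lives over a prime power $m_i$ and so is a legitimate input to Proposition~\ref{allshift}.
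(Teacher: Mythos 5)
Your proposal is correct and follows essentially the same route the paper intends: the paper presents this corollary as a direct consequence of Proposition~\ref{allshift} applied to each primary component (each of which lives over a prime power $m_i$) combined with the weak Addition Theorem, which is exactly your second, ``more conceptual'' route. The bookkeeping points you flag (non-negativity of $\deg(S^{(i)})$ and the value $h_{alg}(\sigma_{m_i}^{s_i})=s_i\log m_i$) are handled identically in the paper's surrounding discussion.
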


\subsubsection{Finitary linear cellular automata conjugated to a power of the shift}\label{lastsubsub}

In \S\ref{astheshift}, we have discussed when a finitary linear cellular automaton $S$ on $\Z_m$ has the same algebraic entropy of an integer power the right shift $\sigma$ of $\Z_m$. In view of the Invariance under conjugation (Proposition~\ref{properties}(1)), in case $S$ and $\sigma^n$, with $n\in\Z$, are conjugated, then $h_{alg}(S)=h_{alg}(\sigma^n)$. So, the following general question arises.

\begin{question}
Which $S\in\mathrm{CA}_f(\Z_m)$ are conjugated to $\sigma^n$ for some $n\in\Z$?
\end{question}

First of all, note that such an $S$ has to be necessarily bijective.

\begin{remark}\label{sigmarem}
For $n,\ell\in\Z$, $\sigma^n$ and $\sigma^\ell$ are conjugated if and only if $|n|=|\ell|$.

In fact, if $\sigma^n$ and $\sigma^m$ are conjugated, then $|n|\log m=h_{alg}(\sigma^n)=h_{alg}(\sigma^\ell)=|\ell|\log m$.
Vice versa, $\sigma^n$ and $\sigma^{-n}$ are conjugated for every $n\in\Z$, as $\sigma$ and $\sigma^{-1}$ are conjugated by the automorphism from Lemma~\ref{wedgeconj}.
\end{remark}

In view of the above observations, one can rephrase the above problem as follows. In fact, Example~\ref{item1} shows a bijective finitary linear cellular automaton that cannot be conjugated to any integer power of the right shift, since its algebraic entropy differs from the algebraic entropy of any integer power of the right shift. 

\begin{question}
Let $S\in\mathrm{CA}_f(\Z_m)$ be bijective. If $h_{alg}(S)=h_{alg}(\sigma^n)$ for some $n \in \Z$, is it true that $S$ is conjugated to $\sigma^{n}$?
\end{question}

In the case when $m$ is a prime power we can state the following question in view of Proposition~\ref{previous}. %and give a partial answer.

\begin{question}\label{question}
Let $m=p^k$ with $p$ a prime and $k\in\N_+$, and let $S\in\mathrm{CA}_f(\Z_m)$ be bijective. Is $S$ conjugated to the power $\sigma^{\deg(S)}$?
\end{question}
 
The next result gives a necessary condition for $S$ to be conjugated to a power of $\sigma$.
 
\begin{theorem}\label{thelasttheorem}
Let $m=p^k$ with $p$ a prime and $k\in\N_+$, and let $S\in\mathrm{CA}_f(\Z_m)$ be bijective.
If $S$ is conjugated to a power $\sigma^n$ of a shift, then $n = \pm \deg(S) $.
In particular, if $S$ and $\sigma$ are conjugated, then $\deg(S) = 1$. 
\end{theorem}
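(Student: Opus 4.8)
The plan is to use the algebraic entropy as an invariant, combined with the Logarithmic Law, to pin down $n$ up to sign. Suppose $S$ is conjugated to $\sigma^n$ for some $n\in\Z$. By the Invariance under conjugation (Proposition~\ref{properties}(1)) we get $h_{alg}(S)=h_{alg}(\sigma^n)=|n|\log m$. On the other hand, since $m=p^k$ is a prime power and $S$ is bijective, Theorem~\ref{Th5} (or directly Proposition~\ref{allshift}) gives $h_{alg}(S)=\deg(S)\log m=\deg(S)\log p^k$. Comparing the two expressions yields $|n|\log m=\deg(S)\log m$, hence $|n|=\deg(S)$, i.e. $n=\pm\deg(S)$, as $\deg(S)\ge 0$ by Definition~\ref{degree}. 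For the last assertion, if $S$ and $\sigma=\sigma^1$ are conjugated, then $n=1$, so $1=|n|=\deg(S)$, giving $\deg(S)=1$.

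The only point that deserves a word of care is that $\deg(S)$ is genuinely well defined and non-negative here: since $S$ is bijective, Proposition~\ref{previous} (equivalence of (1), (2) and (4)) guarantees $|\supp^*(S)|=1$, so $\supp^*(\lambda)\ne\emptyset$ and $\deg(S)\ge 0$ is computed by one of the three cases of Definition~\ref{degree}; in fact $\deg(S)=|l'|$ where $\supp^*(\lambda)=\{l'\}$. This is not strictly needed for the inequality $|n|=\deg(S)$ — that already follows from the entropy comparison above regardless of bijectivity — but it makes the statement $n=\pm\deg(S)$ meaningful and confirms $\deg(S)$ is a non-negative integer.

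There is no real obstacle; the argument is essentially a one-line application of the two entropy formulas (Proposition~\ref{properties}(1) and Theorem~\ref{Th5}). The ``hard part'', if any, is purely conceptual rather than technical: the theorem only gives a \emph{necessary} condition, and the converse — whether bijective $S$ on $\Z_{p^k}$ is actually conjugated to $\sigma^{\deg(S)}$ — is left open in Question~\ref{question}, so the proof should not attempt more than extracting the numerical constraint from the entropy.
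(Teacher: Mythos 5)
Your proof is correct, but it takes a different route from the paper's. You compare $h_{alg}(S)=\deg(S)\log m$ (Theorem~\ref{Th5}, equivalently Proposition~\ref{allshift}) with $h_{alg}(\sigma^n)=|n|\log m$ via the Invariance under conjugation, and read off $|n|=\deg(S)$ at once. The paper instead avoids invoking the full entropy computation of Theorem~\ref{Th5}: it uses the bijectivity of $S$ through Proposition~\ref{previous} to get the exact algebraic identity $S^{p^{k-1}(p-1)}=\sigma^{\pm\deg(S)p^{k-1}(p-1)}$, conjugates it to $\sigma^{np^{k-1}(p-1)}$, and then concludes via Remark~\ref{sigmarem}, which only needs the (elementary) entropy of powers of the shift, not of an arbitrary finitary linear cellular automaton. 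So the paper's argument stays within the circle of ideas of \S\ref{sec:InjSurj} and exploits the structure of bijective $S$ over a prime power, while yours trades that for the heavier but already-proved entropy formula; a genuine dividend of your route, which you correctly point out, is that the numerical constraint $|n|=\deg(S)$ follows from the entropy comparison alone, without using bijectivity (which is of course forced anyway once $S$ is conjugate to an automorphism). Both arguments are complete; your reduction of the final claim ($n=1$ gives $\deg(S)=1$) and your remark on the well-definedness of $\deg(S)$ are fine.
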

\begin{proof}
Assume that there exists an automorphism $\xi\colon\Z_m^{(\Z)}\to \Z_m^{(\Z)}$ such that $S= \xi^{-1} \circ \sigma^{n}\circ \xi$.
Proposition~\ref{previous} gives that
%\dan \footnote{
%Non stiamo sfruttando tutte le conseguenze di Proposition~\ref{previous}, che dice anche che $\supp^*(S)$ è un singoletto. Perciò non solo $\deg(S) =1$, ma inoltre 
%\[
%l'=r'=-1 \text{ oppure } l'=r'=1.
%\]
%}
\[
\sigma^{ \pm \deg(S) p^{k-1} (p-1) } = S^{ p^{k-1} (p-1) } = \xi^{-1} \circ\sigma^{ n p^{k-1} (p-1) }\circ \xi.
\]
In particular, $\sigma^{ \pm \deg(S) p^{k-1} (p-1) }$ and $\sigma^{ n p^{k-1} (p-1) }$ are conjugated, so $\pm \deg(S) p^{k-1} (p-1) = n p^{k-1} (p-1)$ by Remark~\ref{sigmarem}, and we get that $\deg(S) = |n|$.
%
%When $m=p$ is a prime, since $S$ is bijective, Proposition~\ref{previous} gives that $S=\sigma^{\pm\deg(S)}$. Therefore, $S$ and $\sigma$ are conjugated if and only if $S = \sigma^{\pm1}$, in view of Remark~\ref{sigmarem}.
%(che sistema proprio questo, ovvero, quando due potenze di \sigma, nel caso S = σ± deg(S) e \sigma stesso, 
%sono coniugati, cioè quando ± deg(S) = \pm 1 (e quindi, S = σ±1).
\end{proof}

When $m$ is a prime, this condition becomes: %is also sufficient:

\begin{corollary}
Let $m=p$ be a prime and let $S\in\mathrm{CA}_f(\Z_m)$.
If $S$ and $\sigma$ are conjugated, then $S^{ p-1 } = \sigma^{\pm (p-1) }$.
\end{corollary}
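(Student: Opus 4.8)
The plan is to combine Theorem~\ref{thelasttheorem} with the prime-power case $k=1$ of Proposition~\ref{previous}. First I would observe that the hypothesis already forces $S$ to be bijective: conjugation by a group automorphism preserves bijectivity, and $\sigma$ is an automorphism of $\Z_m^{(\Z)}$, so $S$ is an automorphism. Hence $S$ satisfies condition~(2) of Proposition~\ref{previous}, and therefore also the equivalent condition~(6). Specialising~(6) to $k=1$ (so that $p^{k-1}(p-1)=p-1$) gives
\[
S^{p-1}=\sigma^{\pm\deg(S)(p-1)}.
\]

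Next I would invoke Theorem~\ref{thelasttheorem} with $k=1$: since $S$ and $\sigma=\sigma^{1}$ are conjugated, its conclusion $n=\pm\deg(S)$ with $n=1$ yields $\deg(S)=1$ (this is precisely the ``in particular'' clause of that theorem). Substituting $\deg(S)=1$ into the displayed equality above gives $S^{p-1}=\sigma^{\pm(p-1)}$, which is the claim.

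There is essentially no obstacle here; the statement is a direct corollary. The only two points requiring a word of care are: (i) checking that the bijectivity hypothesis needed to apply Proposition~\ref{previous} is automatic from the assumption that $S$ is conjugated to $\sigma$ (rather than having to be assumed separately, as in Theorem~\ref{thelasttheorem}); and (ii) keeping the $\pm$ ambiguity consistent, noting that $\sigma^{p-1}$ and $\sigma^{-(p-1)}$ are in any case conjugated by Remark~\ref{sigmarem}, so the sign is genuinely immaterial.
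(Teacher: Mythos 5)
Your proposal is correct and follows exactly the paper's own argument: bijectivity of $S$ is forced by conjugation to the automorphism $\sigma$, Proposition~\ref{previous}(6) with $k=1$ gives $S^{p-1}=\sigma^{\pm\deg(S)(p-1)}$, and Theorem~\ref{thelasttheorem} supplies $\deg(S)=1$. No differences worth noting.
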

\begin{proof}
If  $S$ and $\sigma$ are conjugated, then $S$ is bijective, so Proposition~\ref{previous} gives that $S^{ p-1 } = \sigma^{ \pm\deg(S)(p-1) }$.
As $S$ and $\sigma$ are conjugated, $\deg(S)=1$ by Theorem~\ref{thelasttheorem}. Therefore, $S^{ p-1 } = \sigma^{\pm (p-1) }$.
%
%Vice versa, if $S^{ p-1 } = \sigma^{\pm (p-1) }$, then $S$ and $\sigma$ are conjugated in view of Remark~\ref{sigmarem}.
\end{proof}

In particular, for $S\in\mathrm{CA}_f(\Z_2)$, $S$ and $\sigma$ are conjugated if and only if $S=\sigma^{\pm1}$.

\smallskip
 The above two results inspire the following more general question.

\begin{question}
Which are the automorphisms of $\Z_m^{(\Z)}$ conjugated to the right shift $\sigma$ of $\Z_m$ or its powers $\sigma^n$ with $n\in\Z$?
What about the endomorphisms of $\Z_m^{(\Z)}$ conjugated to a fixed $S\in\mathrm{CA}_f(\Z_m)$ (in particular, do they still belong to $\mathrm{CA}_f(\Z_m))$?
\end{question}

\end{document}